\documentclass[reqno,12pt,letterpaper]{amsart}
\usepackage[utf8]{inputenc}
\usepackage{amsmath}
\usepackage{amsfonts}
\usepackage{amssymb}
\usepackage{amsthm}
\usepackage[dvipsnames]{xcolor}
\usepackage{enumitem}
\usepackage{mathtools}
\usepackage{mathrsfs}
\usepackage{hyperref}
\usepackage{subcaption}
\usepackage{graphicx}
\usepackage{slashed}

\hypersetup{colorlinks = true,
            citecolor=NavyBlue,
            linkcolor=Green}

\numberwithin{equation}{section}

\let\oldtocsection=\tocsection
\let\oldtocsubsection=\tocsubsection
\renewcommand{\tocsection}[2]{\hspace{0em}\oldtocsection{#1}{#2}}
\renewcommand{\tocsubsection}[2]{\hspace{1em}\oldtocsubsection{#1}{#2}}

\usepackage{cases}

\usepackage{comment}

\newcommand{\RR}{{\mathbb R}}

\newcommand\cD{{\mathcal  D}}
\newcommand\cU{{\mathcal  U}}

\newcommand\cG{{\mathcal  G}}

\newcommand\cO{{\mathcal O}}

\newcommand\cT{{\mathcal T}}
\newcommand\cS{{\mathcal S}}

\newcommand\eps{{\epsilon}}

\newcommand{\mr}{{\mathrm{r } }}
\newcommand{\mi}{{\mathrm{i } }}
\newcommand{\md}{{\mathrm{d } }}

\newcommand{\ma}{{\mathrm{a } }}
\newcommand{\snabla}{\slashed{\nabla}}

\newcommand{\bR}{\mathbb{R}}

\newcommand{\bN}{\mathbb{N}}
\newcommand{\bT}{\mathbb{T}}

\newcommand\adots{\mathinner{\mkern2mu\raise1pt\hbox{.}
\mkern3mu\raise4pt\hbox{.}\mkern1mu\raise7pt\hbox{.}}}

\newtheorem{proposition}{Proposition}[section]
\newtheorem{theo}{Theorem}
\newtheorem{lem}[proposition]{Lemma}

\newtheorem{prop}[proposition]{Proposition}
\newtheorem{ass}[proposition]{Assumption}

\newtheorem{exam}[proposition]{Example}

\theoremstyle{definition}
\newtheorem{defn}[proposition]{Definition}

\theoremstyle{remark}
\newtheorem{rem}[proposition]{Remark}

\newtheorem{nota}[proposition]{Notations}

\numberwithin{equation}{section}
\begin{document}

\title{Propagation of nonlinear pulses near diffractive points of any order}

\author{Jian Wang}
\email{wangjian@ihes.fr}
\address{Institut des Hautes Etudes Scientifiques, 91893 Bures-sur-Yvette, France}
\author{Mark Williams}
\email{williams@math.unc.edu}
\address{Department of Mathematics, University of North Carolina, Chapel Hill, NC 27514}

\begin{abstract}
    We construct pulse-type approximate solutions to nonlinear hyperbolic equations near diffractive points, allowing arbitrary (even infinite) order of grazing. We show that in low regularity spaces and the high frequency limit, such solutions can be approximated by a sum of incoming and reflected pulses constructed using incoming and reflected phases and profiles that satisfy transport equations. New low-regularity estimates comparing the size of pulses to the size of their profiles are required. Earlier geometric optics results for pulses assumed much higher regularity, and considered only propagation in free space or transversal reflection at boundaries.
\end{abstract}

\maketitle

{\hypersetup{linkcolor=NavyBlue}
\vspace{-.6cm}
\tableofcontents}

\newpage

\section{Introduction}

In this paper we consider the diffraction of low regularity pulses traveling on rays that graze boundaries to any order, including infinite order; see Definition \ref{q2}.\footnote{When we speak of the ``order" of a grazing ray, we are referring to its order of tangency with the boundary.   A ``grazing point of order $k$" is a diffractive point of order $2k$ in the language of Definition \ref{q2}.}   

\subsection{Main results for the exterior domain of a convex obstacle}
To explain our results  with minimal preparation, let us first consider special case of the nonlinear wave equation in the exterior domain of a convex obstacle. Let $\mathcal O\subset\RR^n$ be a convex open domain such that $\partial\mathcal O$ is smooth and does not contain straight line segments. For $\epsilon>0$ and $T>0$, we consider the following problem
\begin{equation}\label{eq:obstacle}
\begin{cases}
    (\partial_t^2-\Delta)u^\eps=f(t,x,u^\eps,\partial_t u^\eps, \nabla u^\eps), \ & (t,x)\in [-T,T]\times (\RR^n\setminus \overline{\mathcal O}), \\
    u^\eps(t,x)=0, \ & (t,x)\in [-T,T]\times \partial\mathcal O, \\
    u^\eps(-T, x) = \sqrt{\epsilon} a_\mi(x)V_\mi( \frac{-T+\langle \theta_0,x \rangle }{\epsilon} ), \ & x\in \RR^n\setminus \overline{\mathcal O}, \\
    \partial_t u^\eps(-T,x) = -\frac{1}{\sqrt{\epsilon}} a_\mi(x)V_\mi'(\frac{-T+\langle \theta_0,x \rangle}{\epsilon}), \ & x\in \RR^n\setminus \overline{\mathcal O}.
\end{cases}
\end{equation}
Here we assume that
\begin{itemize}
    \item $f\in C^{\infty}$, $f(t,x,0,0,0)=0$, and $f(t,x,\cdot, \cdot, \cdot)$ and $\nabla_{t,x}f(t,x,\cdot,\cdot,\cdot)$ are Lipschitz;
    \item both $a_\mi\in H^2(\RR^n\setminus \overline{\mathcal O}; \RR)$ and $V_\mi\in H^2(\RR;\RR)$ are compactly supported;
    \item $\theta_0\in \mathbb S^{n-1}$ and there exists $x_0\in \RR^n\setminus \overline{\mathcal O}$ such that $a_\mi(x_0)>0$ and the ray $x_0+\theta_0\mathbb R_+$ intersects $\partial\mathcal O$ tangentially at a point $x_\md$; 
    \item the {\em glancing} set $G_{\phi_\mi}\coloneqq \{ (t,x_\md)\mid (x_\md,\theta_0)\in T^*\partial\mathcal O \}$ is a codimension two $C^1$ submanifold of $\RR^{1+n}$. Here $T^*\partial\mathcal O$ is the cotangent bundle of $\partial\mathcal O$, regarded as a subset of $T^*\RR^n$.
\end{itemize}
Equation \eqref{eq:obstacle} describes the propagation of a plane wave pulse in the exterior of a convex obstacle. At the initial time, the pulse propagates along characteristic lines of the incoming phase $\phi_{\mi}(t,x)\coloneqq -t+\langle \theta_0, x \rangle$, which are straight lines $x+\theta_0 \mathbb R$, $x\in \mathrm{supp } \, a_\mi$. Upon hitting the boundary of the obstacle, reflected waves are created. The propagation of the reflected waves is described by a reflected phase $\phi_\mr=\phi_\mr(t,x)$. Roughly speaking, the ``wave fronts'' of the reflected waves are given by the level sets of the reflected phase.

Our main result approximates the exact solution to \eqref{eq:obstacle}. In particular, it shows that in the $H^1([-T,T]\times (\RR^n\setminus \overline{\mathcal O}))$ norm and in the high frequency limit $\epsilon\to 0+$, the pulse near the diffractive point can be approximated as a sum of  incoming  and reflected pulses.
\begin{theo}\label{thm:obstacle}
    Let $\phi_\mi$ and $\phi_\mr$ be the incoming and reflected phase functions respectively. Then there exist sequences of $C^\infty$ functions $U^{\ell, \epsilon}_{\bullet}(t,x,\theta)$, $\bullet=\mi, \mr$, $\ell\in \mathbb N$, such that 
    \[ \lim_{\ell\to +\infty}\limsup_{\epsilon\to 0+}\left\|u^\eps-\sqrt{\epsilon}\left(U^{\ell,\epsilon}_{\mi}\left(t,x,\frac{\phi_\mi}{\epsilon}\right) + U_{\mr}^{\ell,\epsilon}\left(t,x,\frac{\phi_\mr}{\epsilon}\right)\right)\right\|_{H^1([-T,T]\times (\RR^n\setminus \overline{\mathcal O}))}=0. \]
    Moreover, $U^{\ell,\epsilon}_{\bullet}$ has compact $(t,x)$-support in $[-T,T]\times (\RR^n\setminus \overline{\mathcal O})$, is rapidly decaying in $\theta$ and has moment zero.   The functions $U^{\ell,\epsilon}_{\bullet}$ are truncated and smoothed  approximations to $L^2$ solutions $U_{\bullet}$ of nonlinear transport equations.
\end{theo}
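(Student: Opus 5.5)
The plan is to derive Theorem \ref{thm:obstacle} as a special case of the general diffractive-boundary result developed later in the paper. The first step is a reduction. We localize near the grazing point $x_\md$ with a partition of unity, flatten $\partial\mathcal O$ so that it becomes $\{x_n=0\}$, and write the problem as a first-order symmetric hyperbolic boundary problem for $U=(u,\partial_t u,\nabla u)$ with a maximally dissipative Dirichlet condition. In these coordinates the incoming phase $\phi_\mi=-t+\langle\theta_0,x\rangle$ is characteristic. Its rays meet the boundary tangentially exactly on $G_{\phi_\mi}$. Convexity, together with the absence of straight line segments in $\partial\mathcal O$, makes every such contact diffractive, of finite or infinite order.

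The second step is to construct $\phi_\mr$. Away from the glancing set we solve the eikonal equation $|\partial_t\phi_\mr|=|\nabla\phi_\mr|$ with $\phi_\mr=\phi_\mi$ on the boundary. Near $G_{\phi_\mi}$ the reflected rays are obtained by reflecting the incoming ones. The diffractive hypothesis ensures that the reflected rays leave the boundary, so $\phi_\mr$ is at least Lipschitz, with regularity degenerating only on $G_{\phi_\mi}$; the $C^1$ assumption on $G_{\phi_\mi}$ is what controls this degeneration. We then define profiles $U_\mi(t,x,\theta)$ and $U_\mr(t,x,\theta)$ by nonlinear transport equations along the two ray families. For $U_\mr$ the boundary datum is $-U_\mi$ on $x_n=0$, which enforces the Dirichlet condition, and the profiles are projected onto moment-zero functions in $\theta$. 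Because $f$ and $\nabla f$ are Lipschitz, these equations have solutions in $L^2$-based spaces in $(t,x)$ with values in $H^1_\theta$ or $H^2_\theta$. The data $a_\mi, V_\mi\in H^2$ give this regularity for $U_\mi$, and a slightly weaker regularity for $U_\mr$ near glancing.

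The third step produces $U^{\ell,\epsilon}_\bullet$. We cut the profiles off in an $\ell^{-1}$-neighbourhood of $G_{\phi_\mi}$, truncate in $\theta$, and mollify in $(t,x,\theta)$, arranging the truncation so that the moment-zero property survives. We then form the approximate solution
\[ u_a=\sqrt\epsilon\,\bigl(U^{\ell,\epsilon}_\mi(t,x,\phi_\mi/\epsilon)+U^{\ell,\epsilon}_\mr(t,x,\phi_\mr/\epsilon)\bigr). \]
Applying the wave operator to $u_a$, the $\epsilon^{-1}$ terms cancel by the eikonal equations. The $O(1)$ residual consists of three parts: mollification errors, the cutoff error near glancing, and the interaction terms produced by substituting $u_a$ into $f$. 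The moment-zero and rapid-decay properties in $\theta$ are used to show that the pulse interaction of $\mi$ with $\mr$ contributes negligibly away from glancing, since transversal pulses interact only on a set of measure $O(\epsilon)$. An energy estimate for the Dirichlet problem (Gronwall with the Lipschitz bound on $f$) then bounds $\|u^\epsilon-u_a\|_{H^1}$ by the $L^2$ norm of the residual, after a boundary correction that is small because the profiles match on the boundary up to the cutoff.

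The main obstacle is a low-regularity estimate comparing the size of a pulse with the size of its profile, namely
\[ \|V(t,x,\phi/\epsilon)\|_{L^2_{t,x}}\lesssim \|V\|_{L^2_{t,x}H^s_\theta} \]
for $s>1/2$, uniformly in $\epsilon$. The difficulty is that near glancing $\nabla\phi_\mr$ degenerates relative to the boundary. We would attempt the estimate by changing variables to $(\phi,\cdot)$ along the rays, using the Lipschitz bound on $\phi_\mr$ and the fact that its level sets remain transversal to $\partial_t$. We would also exploit the arbitrary order of grazing only through the shrinking $\ell^{-1}$-neighbourhood, whose contribution vanishes as $\ell\to\infty$ after taking $\limsup_{\epsilon\to 0+}$. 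The order of limits is essential: the mollification and cutoff parameters depend on $\ell$ and are fixed before letting $\epsilon\to0$.
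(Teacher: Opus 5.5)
\textbf{There is a genuine gap, rooted in the $\eps$-scaling.}

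With amplitude $\sqrt\eps$, the eikonal terms in $Pu_a$ are of order $\eps^{-3/2}$, not $\eps^{-1}$. Both $\nabla_{t,x}u_a$ and the transport-level residual carry a factor $\eps^{-1/2}$:
\[
Pu_a-f(u_a)=\eps^{-1/2}\bigl[T_{\phi}W+(P_1\phi)W-\sqrt\eps f(\eps^{-1/2}W)\bigr]_{|\theta=\phi/\eps}+\cdots .
\]
The estimate you single out, $\|V(t,x,\phi/\eps)\|_{L^2}\lesssim\|V\|_{L^2_{t,x}H^s_\theta}$ with $s>1/2$, is Proposition \ref{crude}. It holds for any measurable $\phi$ by the embedding $H^s_\theta\subset L^\infty_\theta$, so the degeneration of $\phi_\mr$ plays no role in it. But it cannot absorb the factor $\eps^{-1/2}$.

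With it you cannot even bound $\|u_a\|_{H^1}$ uniformly. Every profile-level error (mollification, cutoff, moment-zero) is small but independent of $\eps$, and gets multiplied by $\eps^{-1/2}$. The same happens to the boundary mismatch, which enters \eqref{sakamoto} through an $H^1$ norm on $x_1=0$. What you actually need is Proposition \ref{a11z}: $\eps^{-1/2}\|W(t,x,\phi/\eps)\|_{L^2}\lesssim\|W\|_{H^1(\Omega_T\times\RR)}$. Its limit as $\eps\to0$ is a weighted $L^2$ norm of the trace of $W$ on $\{\phi=0\}\times\RR$ (Proposition \ref{a20}). This needs a $C^1$ change of variables $(t,x)\mapsto(\phi,x)$ and $H^1$ regularity of the profiles jointly in $(t,x,\theta)$; regularity in $\theta$ alone does not help.

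\textbf{This is where your plan breaks.} Joint $H^1$ control of $W_\mr$ fails, because differentiating \eqref{nl3} differentiates $P_1\phi_\mr$. That coefficient is already unbounded along the whole shadow boundary $\mathrm{SB}_+$, not only near $G_{\phi_\mi}$. Along $\mathrm{SB}_+$ one also has $d\phi_\mr=d\phi_\mi$, so your transversality argument for the interaction term fails there too.

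Cutting off the \emph{solutions} near $G_{\phi_\mi}$ fixes neither problem. It also creates commutator terms $(T_{\phi_\bullet}\chi_\ell)W_\bullet$ of size $\ell$ on shells of width $\ell^{-1}$. After the $\eps^{-1/2}$ weighting these are measured by their traces on $\{\phi_\bullet=0\}$, and in general they do not tend to zero. For a radial cutoff around the codimension-two set $G_{\phi_\mi}$ they stay of order one.

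\textbf{The missing idea is to truncate the incoming datum instead of the solutions.} Take $a^\mu$ vanishing in a $\mu$-neighbourhood of $\mathrm{SB}_{-T}$.
\begin{itemize}
\item The exact solutions $W^\mu_\mi$, $W^\mu_\mr$ then vanish near $\mathrm{SB}$, so there is no commutator.
\item They lie in $H^1$ for each fixed $\mu$, with $\mu$-dependent bounds. Here $\partial_{x_1}W^\mu_\mr$ is recovered from the equation using $\partial_{x_1}\phi_\mr\neq0$. That is Assumption \ref{exv}, proved for convex obstacles in \S\ref{rphase}; your plan never uses it.
\item They converge to $W_\bullet$ in $L^2$, thanks to the cancellation of $(P_1\phi_\mr)W_\mr$ in the $L^2$ energy identity.
\item The truncation then costs only the initial-data error $\eps^{-1/2}\|(a-a^\mu)(x,\phi_\mi(-T,x)/\eps)\|_{L^2}$. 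This tends to the trace norm of $a-a^\mu$ on $\cS_{-T}$, which Lemma \ref{tr1} makes $o_\mu(1)$ using $\mathrm{SB}_{-T}\pitchfork\cS_{-T}$. This works even though $a^\mu$ cannot converge to $a$ in $H^1$ in general.
\end{itemize}

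\textbf{How the paper actually proves the statement.} It flattens $\partial\mathcal O$ and applies Theorem \ref{mta}, taking Assumption \ref{A03} from \cite{wangwil} and Assumption \ref{exv} from \S\ref{rphase}. If you intend to quote the general result, those verifications are what you must supply.
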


A more precise  and general version of Theorem \ref{thm:obstacle} for  second order strictly hyperbolic differential operators and more general incoming phases is stated in Theorem \ref{mta}. In general, let us consider the space-time domain 
\[ \Omega_T\coloneqq [-T,T]_t\times (\overline{\RR_+})_{x_1}\times \RR^{n-1}_{x'} \ \text{ with } \ T>0 \]
where $\RR_+\coloneqq (0,+\infty)$. We will write $x=(x_1,x')$.
Let $P = P(t,x,\partial_t,\nabla_x)$ be a second-order scalar differential operator with smooth coefficients. We assume $P$ is strictly hyperbolic with respect to $t$ and satisfies Assumption~\ref{amr1} below. We consider the following initial boundary value problem
\begin{equation}\label{in1}
    \begin{cases}
        Pu^\epsilon = f(t,x,u^\epsilon, \partial_tu^\epsilon, \nabla u^\epsilon), \ &  (t,x)\in \Omega_T, \\
        u^\epsilon(t,0,x')=0, \ & (t,0,x')\in \Omega_T,\\
        u^\epsilon(-T,x)=g^\epsilon(x), \ \partial_t u^\epsilon(-T,x)=h^\epsilon(x), \ & x\in \RR_+\times \RR^{n-1}.
    \end{cases}
\end{equation}
As in the model case above, we assume 
\begin{itemize}
    \item $f\in C^{\infty}$, $f(t,x,0,0,0)=0$, and $f(t,x,\cdot, \cdot, \cdot)$ and $\nabla_{t,x}f(t,x,\cdot,\cdot,\cdot)$ are Lipschitz;

    \item the initial data are pulses, that is,
        \begin{align}\label{a1aw}
        \begin{split}
            &g^\eps=\sqrt{\eps}V_0\left(x,\frac{\phi_\mi(-T,x)}{\eps}\right)+o_\eps(1) \text{ in }H^1(\overline{\bR^n_+}),\\
            &h^\eps=\frac{1}{\sqrt{\eps}}V_1\left(x,\frac{\phi_{\mi}(-T,x)}{\eps}\right)+o_\eps(1) \text{ in }L^2(\overline{\bR^n_+}). %\;\phi_0(x)=\phi(x,0).
        \end{split}
        \end{align}
        Here $\phi_\mi=\phi_\mi(t,x)$ is a characteristic incoming phase $\phi_i$ as in Assumption~\ref{amr3}, and 
\begin{subequations}\label{a1cz}
    \begin{align}
        \label{a1cza}
        & V_0\in H^2(\overline{\bR^n_+}\times\bR_\theta), \\
        \label{a1czb}
        & V_1=(\partial_t\phi_\mi(-T,x))\partial_\theta V_0\in H^1(\overline{\bR^n_+}\times\bR_\theta).
    \end{align}
\end{subequations}
Let $\gamma_\mi$ denote an incoming characteristic of $T_{\phi_\mi}$ (see \eqref{ezb} for the definition) that passes through the diffractive point $0$.   The function $V_0(x,\theta)$ is chosen to have compact $(x,\theta)$-support with $x$-support in a small neighborhood of $\gamma_i\cap \{t= -T\}$ contained in $x_1>0$.   
\end{itemize}

In Theorem \ref{mta}, approximate solutions to the initial boundary value problem \eqref{in1} are constructed.  As in the  Theorem \ref{thm:obstacle} for the model case, the approximate solutions are sums of incoming pulses and reflected pulses.

\begin{rem}\label{a1cy}
Condition \eqref{a1czb} is a polarization condition which implies that to leading order a single wave with phase $\phi_\mi$ emerges into $t>-T$.  If we omit that condition, distinct waves associated to $\phi_\mi$ and a second phase $\tilde \phi_\mi$ will emerge. %{\color{red}ref?}
\end{rem}

The hyperbolic boundary problem  \eqref{in1} is similar to the one studied in \cite{wangwil}, but now the 
incoming wave is a \emph{pulse} rather than a wavetrain as in \cite{wangwil}.    The incoming  pulse is constructed  so that it is transported along  spacetime characteristics that include rays which are allowed to graze the boundary to {any} order.   The main goal of the paper is to construct explicit approximate solutions $u^\eps_\ma$ to \eqref{in1} that qualitatively describe the solution near such grazing points and which are close to the exact solutions $u^\eps$ in the $H^1(\Omega_T)$ norm  on a fixed time interval independent of $\eps$ when $\eps$ is small.

\begin{exam}\label{fundex}
The fundamental motivating example  is the case where the spacetime manifold is $M=\mathbb{R}_t\times (\mathbb{R}^n\setminus \mathcal{O})$, 
where $\mathcal{O}\subset \mathbb{R}^n$ is an open convex  
obstacle with $C^\infty$ boundary, and the governing hyperbolic operator is the wave operator $\Box \coloneqq -\partial_t^2+\Delta$, as described in Theorem \ref{thm:obstacle}.   
This example can always be reduced by a local change of variables flattening the boundary to a problem of the form~\eqref{in1}.
\end{exam}

\subsection{Background}

Rigorous nonlinear geometric optics for pulses  has  been considered by several authors --- mainly by Alterman--Rauch \cite{altermanrauch, ar2}, Carles--Rauch \cite{cr}, Coulombel--Williams \cite{CW1,CW2},   Williams \cite{williams5}, and C. Willig \cite{willig}.   In all these cases  the pulses being considered were of \emph{high} regularity, that is, $H^s$ for $s$ big.  
Some of these papers considered propagation, even focusing,  in free space, others considered \emph{transversal} reflection off boundaries; none dealt with diffraction.\footnote{The word ``diffractive" occurs in the title of \cite{altermanrauch}, but there it has a completely different meaning from that used here.}

The diffraction of low regularity \emph{wavetrains} in the case where only \emph{first-order} grazing rays are present was studied by Cheverry \cite{cheverry1996}; see also \cite{dumas2002}.  A diffraction result for low-regularity wavetrains propagated by grazing rays of any order was given in \cite{wangwil}.   We believe that the present paper gives the first diffraction result for low-regularity pulses propagated by grazing rays of arbitrary order,  including first order.  The result for low regularity pulses in free space also appears to be new; earlier work required much more regular initial data.

Our study of \emph{low} regularity pulses is necessitated by our approach to the study of pulse diffraction.  
An outstanding open problem, even in the linear theory, is to construct highly regular approximate solutions (``parametrices") to diffraction problems that involve grazing rays of order higher than one.   In the 1970s Melrose \cite{mel1975} and Taylor \cite{taylor1976cpam} used Fourier-Airy integral operators to construct parametrices for diffraction problems that involved just first-order grazing.   The related problem of constructing 
highly regular wavetrain or pulse approximate  solutions  for  diffraction problems that involve grazing rays of order higher than one is likewise still quite open.   In the low-regularity category a significant portion of the complicated (and interesting) phenomena that make the high-regularity problem inaccessible at present  is invisible.  
This is the price we pay at the moment for a construction of approximate solutions that incorporates grazing rays of any order.

\subsection{Pulses versus wavetrains}

Some of the analysis from \cite{wangwil} carries over from the wavetrain case to the pulse case.  
The geometric set-up is the same.  The grazing set  and reflected flow map assumptions, as well as the procedures for verifying those assumptions, are unchanged; see Assumptions \ref{A02} and \ref{A03}.   These assumptions are needed to construct a reflected phase $\phi_\mr$ that is $C^1$ up to the shadow boundary.  Thus, in the pulse case the incoming and reflected characteristic phases $\phi_\mi$, $\phi_\mr$ are constructed exactly as before.  But pulses, and in particular low regularity pulses, present their own new difficulties, some of which we now describe.

For present purposes we write the approximate pulse solutions in the simplified form 
\begin{align}\label{in2}
u^\eps_\ma(t,x)=\sqrt{\eps}U_{\mi}(t,x,\theta)_{|\theta=\frac{\phi_\mi}{\eps}}+\sqrt{\eps}U_\mr(t,x,\theta)_{|\theta=\frac{\phi_\mr}{\eps}}.
\end{align}
Here  the incoming and reflected profiles  $U_\mi(t,x,\theta)$ and $U_\mr(t,x,\theta)$ are \emph{decaying} in $\theta$ and are defined as the unique decaying primitives in $\theta$ of  (moment zero) profiles $W_\mi(t,x,\theta)$, $W_\mr(t,x,\theta)$ constructed to satisfy certain transport equations.  Appendix~\ref{mz} discusses moment zero approximations.

The ansatz \eqref{in2} should be compared to the one for wavetrains 
\begin{align}\label{in3}
u^\eps_\ma(t,x)=\underline{u}(t,x)+{\eps}U_{\mi}(t,x,\theta)_{|\theta=\frac{\phi_\mi}{\eps}}+{\eps}U_\mr(t,x,\theta)_{|\theta=\frac{\phi_\mr}{\eps}}
\end{align}
where now $U_\mi(t,x,\theta)$ and $U_\mr(t,x,\theta)$ are \emph{periodic} in $\theta\in\bR$ and are defined as the unique mean zero periodic primitives  in $\theta$ of  mean zero profiles $W_\mi(t,x,\theta)$, $W_\mr(t,x,\theta)$ satisfying transport equations.

The absence of the $\underline{u}(t,x)$ term in \eqref{in2} reflects the fact that pulses have no well-defined mean.  The much larger amplitude $\sqrt{\eps}$ in \eqref{in2}, versus $\eps$ in \eqref{in3},  reflects the fact that pulses of a given amplitude are much smaller in Sobolev norms than wavetrains of the same amplitude.  
Indeed, since $W_\mi=\partial_\theta U_\mi$, the main contribution to the $H^1(\Omega_T)$ norm of $\eps U_\mi(t,x,\phi_\mi/\eps)$ is $$\|W_\mi(t,x,\phi_\mi/\eps)\nabla \phi_\mi\|_{L^2(\Omega_T)},$$ which is generally of size $\sim 1$ when $U_\mi$ is a wavetrain, but of size $\sqrt{\eps}$ when $U_\mi$ is a pulse.  In other words we  have 
   $$\frac{1}{\sqrt{\eps}}\|W_\mi(t,x, \phi_\mi/\eps)\nabla \phi_\mi\|_{L^2(\Omega_T)}\sim 1$$
   for pulses.    This estimate is proved  in \S \ref{size}, Proposition \ref{a11z}.  Thus, in order for the $H^1(\Omega_T)$ norm of 
$\eps^{\beta} U_\mi(t,x,\phi_\mi/\eps)$ to be of size $\sim 1$ when $U_\mi$ is a pulse, we need to take {$\beta=\frac12$}.

It would be of no interest to prove an error estimate $\|u^\eps-u^\eps_\ma\|_{H^1(\Omega_T)}\to 0$ as $\eps\to 0$ if each of 
$u^\eps$, $u^\eps_\ma$ \emph{separately} approaches $0$ in $H^1(\Omega_T)$ as $\eps\to 0$.  Thus, in view of the previous paragraph, to get an interesting nonlinear geometric optics result with an error estimate in the $H^1(\Omega_T)$ norm, we must consider pulses such that $( \partial_t u^\eps, \nabla u^\epsilon)$ and 
$(\partial_t u^\eps_\ma, \nabla u^\epsilon_\ma)$ have amplitude $\frac{1}{\sqrt{\eps}}$, which corresponds to size $\sim 1$ in $L^2(\Omega_T)$.

One of the difficulties with low regularity pulses (or low regularity wavetrains) is that 
in most cases $f(t,x,u,\partial_t u,\nabla u)$ fails to lie in any useful Banach space of functions when $f$ is nonlinear and $(u,\partial_t u, \nabla u)$ is not at least locally bounded.  But $f(t,x,u,\partial_t, \nabla u)$ lies in $L^2(\Omega_T)$ \emph{if} $f$ is Lipschitz in $(u,\partial_t u, \nabla u)$ and $(u,\partial_t u, \nabla u)\in L^2(\Omega_T)$.    Thus, as in the wavetrain case of \cite{wangwil}, here we consider Lipschitz nonlinearities in order for the nonlinear problem to make sense.\footnote{As in \cite{wangwil} most of the difficulties for the diffraction problem are already present in the linear case.}

A complicating feature of pulses is revealed in the estimate \eqref{a21}, which shows that when a pulse profile, $Z(t,x,\theta)$ for example,  is $H^1$ and $\phi$ is an appropriate  characteristic phase,
the size of 
$$\frac{1}{\sqrt{\eps}}\left\|Z(t,x,\phi/\eps)\right\|_{L^2(\Omega_T)}$$ 
for $\eps$ small  is determined by the \emph{trace} of $Z(t,x,\theta)$ on the hypersurface in $(t,x)$-space given by $\cS=\{(t,x) \mid \phi(t,x)=0\}$.\footnote{We require that the map $(t,x)\mapsto (\phi(t,x),x)$ is a $C^1$ diffeomorphism on a neighborhood of the $(t,x)$-support of $Z$.}   More precisely,  to control $\frac{1}{\sqrt{\eps}}\|Z(t,x,\phi/\eps)\|_{L^2(\Omega_T)}$ we must control the $L^2$ norm of the \emph{trace} of $Z(t,x,\theta)$ on that hypersurface.   Indeed, for pulses we have the estimates\footnote{The estimates \eqref{in3z} are proved in \S \ref{size}.}
\begin{subequations}\label{in3z}
    \begin{align}
    \label{in3za}
        & \frac{1}{\sqrt{\eps}}\|Z(t,x,\phi/\eps)\|_{L^2(\Omega_T)}\lesssim \|Z(t,x,\theta)\|_{H^1(\Omega_T\times \bR_\theta)},\\
        \label{in3zb}
        & \lim_{\eps\to 0}\frac{1}{\sqrt{\eps}}\|Z(t,x,\phi/\eps)\|_{L^2(\Omega_T)}=\left\|\frac{Z(t,x,\theta)}{|\nabla_{t,x} \phi|^{1/2}}\right\|_{L^2(\cS\times \bR_\theta)} \text{ for }Z\in H^1(\Omega_T\times \bR_\theta).
    \end{align}
\end{subequations}
Compare these to the wavetrain estimate (\cite{jmr1996cpam}) 
\begin{align}\label{in3y}
\limsup_{\eps\to 0}\|Z(t,x,\phi/\eps)\|_{L^2(\Omega_T)}\lesssim C\|Z(t,x,\theta)\|_{L^2(\Omega_T\times  \bR_\theta)}\text{ for }Z\in L^2(\Omega_T,H^1(\bT_\theta)).
\end{align}
The estimates \eqref{in3z} indicate that we need to construct  profiles $W_\mi$, $W_\mr$ with  an extra derivative of $(t,x)$-regularity in the pulse case versus the wavetrain case.  

But in  the pulse case a serious difficulty arises when we try to control the $H^1$ norm of the profile $W_\mr$. The problem is that the bad term $(P_1\phi_\mr)W_\mr$ appears in the transport equation for $W_\mr$; see \eqref{nl3}.  The phase $\phi_\mr$ is only $C^1$ up to the shadow boundary; second derivatives in $(t,x)$ blow up there, so $P_1\phi_\mr$ blows up there.\footnote{The same difficulty arises in the wavetrain case, but in view of \eqref{in3y} we need only  control the $L^2(\Omega_T,H^1(\bT))$ norm of $W_\mr$ in that case.   Observe that $\theta$ derivatives of $P_1\phi_\mr$ are zero.} 
   That  already makes the bad term hard to control when estimating the $L^2$ norm of $W_\mr$,  but a surprising cancellation of the term involving $(P_1\phi_\mr)W_\mr$ in the $L^2$ estimate of $W_\mr$, already observed in \cite{wangwil},  permits one to bound the $L^2$ norm of $W_\mr$.  In order to bound the $H^1$ norm of $W_\mr$ one has to differentiate the transport equation  for $W_\mr$, and hence must \emph{differentiate} the already unbounded factor $P_1\phi_\mr$!\footnote{The factor $P_1\phi_\mi$  appearing in the transport equation \eqref{nl4} for $W_\mi$ remains bounded even after differentiation.}
  The cancellation argument does not work now, so we have a new difficulty, which is addressed in the proof of Theorem~\ref{mta} by new arguments.  See especially steps \textbf{3-14} of the proof of that theorem.  Steps \textbf{11-14} use \eqref{in3za}, while an estimate like \eqref{in3zb} is used in step \textbf{12} to control the term ``$C$". 

    We solve the profile equations using truncated initial data to construct truncated approximations to $W_\mr$ and $W_\mi$, call them $W_\mr^\mu$ and $W^\mu_\mi$, which are both supported away from the shadow boundary.  The supports of $W^\mu_\mr$ and $W^\mu_\mi$ approach the shadow boundary as $\mu\to 0$. We  show that we can control the $H^1$ norms of $W_\mr^\mu$ and $W^\mu_\mi$ for each fixed value of the truncation parameter $\mu$, even though we do not get uniform control as $\mu\to 0$. 
  Along with the fact that $W^\mu_\bullet\to W_\bullet$ in $L^2$ for  $\bullet=\mr, \mi$ as $\mu\to 0$, 
     this turns out to be enough for the error analysis. 
The arguments used to control the $H^1$ norms of $W_\mr^\mu$  and $W_\mi^\mu$ use the Lipschitz assumption, Assumption~\ref{amr1z}.

Another special difficulty associated with pulses, although not a new one, arises from the fact that in order for a decaying profile like $W_\mi(t,x,\theta)$, for example, to have a decaying primitive in $\theta$, $W_\mi$ must have \emph{moment zero}, that is, 
$$\int_{-\infty}^\infty W_\mi(t,x,\theta) \,\mathrm d\theta=0.$$ 
But even if the initial data in the (nonlinear) transport equation for $W_\mi$ is taken to have moment zero, that property is \emph{not} generally inherited by the solution $W_\mi$.   The way around this is to replace $W_\mi$ by a suitable \emph{moment zero approximation} before taking the primitive in $\theta$ to get $U_\mi$.  To define this approximation the Fourier transform in $\theta$ of $W_\mi$, denote it by $\widehat W_\mi(t,x,k)$, is multiplied by a low frequency cutoff function $\chi_\omega(k)$ that vanishes near $k=0$ and is $1$ outside a neighborhood of $0$ of radius $\sim \omega$; see Appendix~\ref{mz}.  This approximation introduces new errors that must be controlled.   In the wavetrain case, where the profiles are periodic with a well-defined mean, it is possible to formulate separate equations for the mean $\underline{u}$ and for $W_\mi$ which have the property  that mean zero initial data for $W_\mi$ yields a solution $W_\mi$ with mean zero.\footnote{Define the mean of the  $2\pi$-periodic function $W_\mi$ to be $\frac{1}{2\pi}\int^{2\pi}_0 W_\mi(t,x,\theta) \,\mathrm d\theta.$}  Thus, moment zero approximations are not needed in the wavetrain case.

\subsection{Organization of the paper} 
In \S \ref{asmr} we describe the classification of boundary points into elliptic, hyperbolic, and glancing sets, and define the class of glancing points of diffractive type $\cG_\md$.  We give a precise statements of the main assumptions and the main result, Theorem \ref{mta}.  In \S \ref{size}  we prove two main estimates on the size of low regularity pulses that are needed for the error analysis, the crude estimate of Proposition \ref{crude} and the more refined estimate of Proposition \ref{a11z}.

In \S \ref{npfs} we construct  a low regularity, large amplitude pulse  propagating in free space.   This construction  does not follow from earlier works (e.g., \cite{ar2}) on geometric optics for high-regularity pulses in free space.   There is no shadow boundary in this problem, so the factor $P_1\phi$ that appears in the profile equation \eqref{pe}  for $W(t,x,\theta)$ is $C^\infty$; derivatives of $P_1\phi$ remain bounded.   There is no need for a truncation argument here, and one can construct $W\in H^1$.

The  proof of the free space result,  Theorem \ref{fnp}, is a good warm-up for the proof of our main result on diffraction, Theorem \ref{mta}, which is proved in \S \ref{diff}.   \S \ref{rphase} shows that Assumption \ref{exv} on the reflected phase $\phi_\mr$ holds in a large class of examples involving grazing points of any order, including the special case treated in Theorem \ref{thm:obstacle}. 
Appendix~\ref{mz} recalls the facts we need about moment-zero approximations, and Appendix~\ref{tools} gathers a few other technical tools for convenient reference.

\begin{nota}
We clarify some notations used in this paper.
\begin{enumerate}
    \item Let $\bN_0 \coloneqq \{0,1,2,\dots\}$.

    \item Write $\overline{\RR_+^n}\coloneqq \overline{\RR_+}\times \RR^{n-1}$.

    \item Let $T>0$ and $\Omega_T=[-T,T]_t\times [0,+\infty)_{x_1}\times \RR^{n-1}_{x'}$, $\tilde\Omega_{T} = [0,T]_t\times \RR_{x_1}\times \RR^{n-1}_{x'}$. We use coordinates $(t,x)=(t,x_1,x')$ for points in $\Omega_{T}$ and $\tilde\Omega_T$. We also use notations $\Omega^\flat_T\coloneqq \Omega_T\cap\{x_1=0\}$ and $\slashed{\Omega}_T\coloneqq [-T,T]_t\times \RR^{n-1}_{x'}$.

    \item Suppose $-T<t_0\leq T$.   Set $\Omega_{[-T,t_0]}\coloneqq [-T,t_0]_t\times [0,\infty)_{x_1}\times \bR^{n-1}_{x'}$, $\Omega^{\flat}_{[-T,t_0]}\coloneqq \Omega_{[-T,t_0]}\cap\{x_1=0\}$ and $\slashed\Omega_{[-T,t_0]}\coloneqq [-T,t_0]\times \RR^{n-1}_{x'}$.

    \item For a function $f=f(t,x)$ where $x=(x_1, x')$, $\nabla f$ always means the gradient with respect to $x$, unless otherwise specified. Moreover, $\slashed\nabla f\coloneqq \nabla_{x'}f$ is the gradient with respect to $x'$.

    \item Many functions in this paper depend on a small parameter $\eps$, and we often suppress $\eps$ in denoting those functions.
If $H$ depends on $\eps$ and we assert that $H\in X$, where $X$ is a function space with norm $\|\cdot\|$, we mean 
$\|H\|\leq C$ with $C$ independent of $\eps$ small.

    \item Suppose $f^\mu$, $g^\mu$ are functions depending on a variable parameter $\mu>0$ and let $\|\cdot\|_1$, $\|\cdot\|_2$ be norms.
The inequality $\|f^\mu\|_1\lesssim \|g^\mu\|_2$ means that there exists $C>0$ independent of $\mu$ such that 
$\|f^\mu\|_1\leq C\|g^\mu\|_2$.  If $C$ depends on $\mu$, we write $\|f^\mu\|_1\lesssim_\mu \|g^\mu\|_2$.
\end{enumerate}
\end{nota}

\section{Assumptions and main result}\label{asmr}

In order to describe and state our main result with a minimum of preparation,  we work now in coordinates $(t,x_1,x')\in \RR\times\RR\times \mathbb{R}^{n}$ and dual coordinates $(\tau,\xi_1,\xi')$ where $t$ is the time variable and $x_1=0$ defines the (noncharacteristic)  boundary.  

\begin{ass}\label{amr1}
 The operator $P$ in \eqref{in1} is a second-order scalar operator $P(t,x,\partial_t, \nabla)$ with coefficients in $C^\infty(\bR^{n+1})$, strictly hyperbolic with respect to $t$,
whose principal symbol has the form 
\begin{align}\label{d1}
p(t,x,\tau,\xi)=\xi_1^2+q(t,x,\tau, \xi'),
\end{align}
where $q(t,x,\cdot,\cdot)$ has signature $(n-1,1)$.   
Moreover, for constants $c\in \RR$, the surfaces $t=c$ are spacelike for $P$; that is 
$p(c,x,1,0_{\RR^n})<0$.   We take the coefficients of $P$ to be constant outside some neighborhood of $0$ in $\bR^{n+1}$.  
\end{ass}

\begin{ass}\label{amr1z}
The function $f(t,x,\zeta)$  in \eqref{in1} is real-valued, lies in $C^\infty(\mathbb{R}^{n+1}_{t,x}\times \mathbb{R}^{n+2}_\zeta)$, and satisfies $f(t,x,0)=0$.  Moreover $f$, $\partial_tf$ and $\nabla f$ are Lipschitz in $\zeta$.   That is, there exists $M>0$ such that 
\begin{align}\label{lips}
|f(t,x,\zeta_1)-f(t,x,\zeta_2)|\leq M|\zeta_1-\zeta_2| \ \text{ for all } \ (t,x) \ \text{ and } \ \zeta_1, \zeta_2,
\end{align}
with a similar estimate for each of $\partial_t f$ and $\nabla f$.  
\end{ass}

\begin{rem}\label{localc}
(i) Assumption \ref{amr1} implies that the boundary $x_1=0$ is noncharacteristic and in fact timelike for $P$; that is
\begin{align*}%\label{amr2}
p(t,0,x',0,1,0_{\RR^{n-1}})>0.
\end{align*}

(ii) Our analysis will be local near $0\in \bR^{n+1}$, so it is no restriction to assume that the coefficients of $P$ are constant outside a neighborhood of $0$. 

(iii) Clearly, Assumption \ref{amr1z} implies $|f_\zeta|\leq M$ for all $(t,x,\zeta)$.

(iv) The problem \eqref{in1} can be stated in coordinate-invariant form where $P$ is a second order differential operator with smooth coefficients on $\bR^{n+1}$, the boundary is a smooth timelike hypersurface defined by  $\beta=0$, the initial surface is a smooth spacelike hypersurface defined by $\alpha=0$, and the two surfaces intersect at $0\in\bR^{n+1}$.   We show in \cite[\S 3]{wangwil} how to choose local ``standard form" coordinates near $0$ so that the principal symbol of $p$ takes the form~\eqref{d1}.
In this formulation $\Omega_T$ is replaced by $\{m\in\bR^{n+1} \mid \beta(m)>0, -T\leq \alpha(m)\leq T\}$.
To shorten this paper we have decided to work throughout in these standard form coordinates.

\end{rem}

Before stating the other assumptions we describe the classification of boundary points.   
Let $\bR^{n+1}_+\coloneqq \{(t,x)\in\bR^{n+1} \mid x_1>0\}$.  We will write points in $T^*\bR^{n+1}$, $\partial T^*\bR^{n+1}$, and $T^*(\partial \bR^{n+1}_+)$ as $(t,x,\tau,\xi)=(t, x_1, x',\tau, \xi_1,\xi')$, $(t,0,x',\tau, \xi_1,\xi')$, and $(t,x',\tau,\xi')$ respectively.  Let 
\begin{align*}
 i^*:\partial T^*\bR^{n+1}\to T^*(\partial \bR^{n+1}_+), \ \ (t,0,x',\tau,\xi_1,\xi') \mapsto (t,x', \tau, \xi').
\end{align*}
We recall from \cite{melsjo1978}  the decomposition 
\begin{align*}%\label{q1}
T^*\partial \bR^{n+1}\setminus 0=E\cup H\cup G
\end{align*}
into \emph{elliptic}, \emph{hyperbolic}, and \emph{glancing} sets.  
If $\sigma=(t,x',\tau,\xi')\in T^*\partial \bR^{n+1}\setminus 0$, we say that $\sigma$ belongs to $E$, $H$, or $G$ if the number of elements in  $(i^*)^{-1}(\sigma)\cap p^{-1}(0)$ is zero, two, or one respectively; or equivalently, if $q(t,0,x', \tau, \xi')$ is positive, negative, or zero respectively.   The sets $E$ and $H$ are conic open subsets of $T^*\partial \bR^{n+1}\setminus 0$, and $G$ is a closed conic hypersurface in  $T^*\partial \bR^{n+1}\setminus 0$.

Recall that the Hamiltonian vector field associated to $p=\xi_1^2+q(t,x,\tau,\xi')$ is the vector field on $T^*\bR^{n+1}$ given by 
\begin{align*}
H_p = \partial_{\tau,\xi}p \cdot\partial_{t,x}-\partial_{t,x}p \cdot\partial_{\tau,\xi}.
\end{align*}
Integral curves of $H_p$ are called \emph{bicharacteristics}.

\begin{defn}\label{q2}
Let $\sigma=(t,x',\tau,\xi')\in G$ and suppose $(i^*)^{-1}(\sigma)\cap p^{-1}(0)=\{\rho\}$, where $\rho\in T^*_{(t,0,x')}\bR^{n+1}$.   
We say $\sigma\in G^\ell$, the glancing set of order at least $\ell\geq 2$, if 
\begin{align}\label{q2z}
(H_p^j x_1)(\rho)=0 \ \text{ for } \ 0\leq j<\ell.   
\end{align}
Thus, $G=G^2\supset G^3\supset \dots\supset G^\infty$.  

We say $\sigma\in G^{\ell}\setminus G^{\ell+1}$, the set of glancing points of exact order $\ell$, if $\sigma\in G^\ell$ and $(H^\ell_px_1)(\rho)\neq 0$.     We will study the transport of pulses near points $\sigma\in G^{2k}\setminus G^{2k+1}$, $k\geq 1$,  such that $(H^{2k}_p x_1)(\rho)>0$.    When $k=1$, such a point $\sigma$ is a  classical \emph{diffractive point} as studied in \cite{mel1975} or \cite{cheverry1996}.   When $k\geq 1$ we refer to $\sigma$ as a \emph{diffractive point of order $2k$}, and we write\footnote{Sometimes, for example earlier in this introduction, we refer to a diffractive point of order $2k$ as a grazing point of order $k$.}
\begin{align}\label{q3}
\sigma\in G^{2k}_\md\setminus G^{2k+1} \ \Leftrightarrow \
\begin{cases}
& (i^*)^{-1}(\sigma)\cap p^{-1}(0)=\{\rho\}, \\ 
& (H_p^j x_1)(\rho)=0 \ \text{ for } \ 0\leq j<2k, \ \text{ and } \ (H^{2k}_p x_1)(\rho)>0.
\end{cases}
\end{align}  
\end{defn}

\begin{rem}
(i) If $\sigma\in G^{2k}_\md\setminus G^{2k+1}$, let $\gamma(s)$ denote the  bicharacteristic of $p$ such that $\gamma(0)=\rho$ for $\rho$ as in \eqref{q2z}.    Then $\gamma$ is tangent to $\partial T^*\bR^{n+1}$ at $\rho$ and lies $T^*\{(t,x) \mid x_1>0\}$ for small $s\neq 0$.

(ii) \emph{Gliding points} of order $2k$, $\sigma\in G^{2k}_{\mathrm g}\setminus G^{2k+1}$, are defined as in \eqref{q3} with the single change $(H^{2k}_p x_1)(\rho)<0$.   If $\sigma\in G^\ell\setminus G^{\ell+1}$ for some odd $\ell$, we call $\sigma$ an \emph{inflection point} of order $\ell$.  
\end{rem}

\begin{defn}[Diffractive points of order $\infty$]\label{q2a}
Let $\sigma\in G^\infty$ and suppose $(i^*)^{-1}(\sigma)\cap p^{-1}(0)=\{\rho\}$.
We say that $\sigma$ is a \emph{diffractive point of order $\infty$} and write $\sigma\in G^\infty_\md$ if the 
bicharacteristic $\gamma(s)$ of $p$ such that $\gamma(0)=\rho$ lies in $T^*\{(t,x) \mid x_1>0\}$ for small $s\neq 0$.
\end{defn}

\begin{defn}[Glancing points of diffractive type]\label{q2b}
We denote by 
\begin{align}\label{gpd}
\mathcal{G}_\md \coloneqq \bigcup_{k=1}^\infty \left(G^{2k}_\md\setminus G^{2k+1}\right)\cup G^\infty_\md
\end{align}
the set of \emph{glancing points of diffractive type}.
\end{defn}

The data $g^\eps$, $h^\eps$ described in \eqref{a1aw} and \eqref{a1cz}
  supplies the incoming pulses.    
The surfaces of constant incoming phase are the spacetime surfaces $\phi_\mi(t,x)=\mathrm{constant}$.  

\begin{ass}\label{amr3}
 The function $\phi_\mi$, called the \emph{incoming phase}, is a \emph{given} $C^\infty$ function that satisfies the \emph{eikonal} equation 
\begin{align*}
p(t,x,\partial_t\phi_\mi, \nabla \phi_\mi)=0 \ \text{ on } \ U \ \text{ with } \ \partial_t\phi_\mi< 0 \ \text{ on } \ U. 
\end{align*}
where $U$ is some $\mathbb{R}^{n+1}$-neighborhood of $0$ that we take to be an open ball centered at $0$.\footnote{Solutions of the eikonal equation can be constructed by the method of characteristics; see, for example, \cite{williams2022}.   The ball $U$ may need to be shrunk a finite number of times in this paper.}   

\end{ass}

We will see that the pulses are transported along characteristics of $p$ associated not only to $\phi_\mi$ but also to an associated \emph{reflected phase} $\phi_\mr$, whose construction is described below.
The characteristics associated to $\phi_\bullet$, $\bullet=\mi,\mr$, are integral curves of the \emph{characteristic vector field of $P$ associated to $\phi_\bullet$}:
\begin{align}\label{ezb}
T_{\phi_\bullet}\coloneqq \left(2\xi_1\partial_{x_1}+\partial_{\tau,\xi'}q(t,x,\tau,\xi')\cdot\partial_{t,x'}\right)_{|(\tau,\xi)=(\partial_t\phi_\bullet, \nabla\phi_\bullet)}.
\end{align}    
The initial pulse profile $V_0(x,\theta)$ in \eqref{a1cz} is  chosen to have compact support in $x_1>0$,  and the incoming phase $\phi_\mi$ is chosen so that {some} of the characteristics of $\phi_\mi$ emerging from points in the $x$-support of  $V_0$   graze the boundary $x_1=0$ to some finite or possibly infinite order.  Each such grazing characteristic is tangent to $x_1=0$ at a single spacetime point, and nearby points on the characteristic lie in $x_1>0$.   The order of tangency is what we mean by the order of ``grazing".  We arrange so that the origin $0\in\Omega_T$ is such a point of tangency.   Near each grazing characteristic there are transversal incoming characteristics that reflect off the boundary; see Figure \ref{rays}. 

\begin{figure}[t]
\begin{center}
    \includegraphics[scale=0.5]{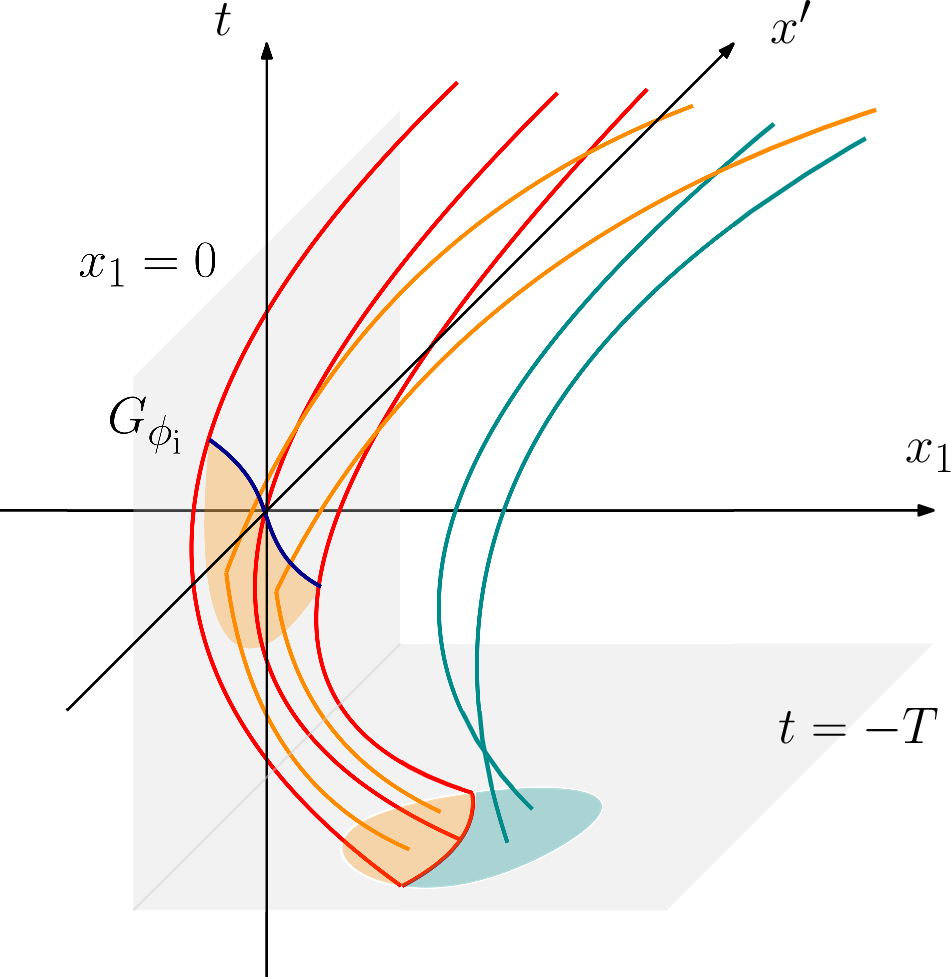}
    \caption{Characteristics associated to $\phi_\mi$ and $\phi_\mr$. The yellow curves reflect off the boundary, the red curves graze the boundary, and the green curves do not touch the boundary. The dark curve on $\{x_1=0\}$ is the grazing set $G_{\phi_\mi}$.}
    \label{rays}
\end{center}
\end{figure}

Let $\phi_0(t,x')=\phi_\mi(t,0,x')$.   Observe that the eikonal equation implies
\begin{align*}
\mathrm{Graph}((\partial_t\phi_0, \slashed{\nabla}\phi_0))\subset H\cup G.
\end{align*}

The following assumption means that a characteristic line of $\phi_\mi$ grazes the boundary $\{x_1=0\}$  at $0$ to some finite or possibly infinite order:
\begin{ass}\label{A1}
With $\mathcal{G}_\md$ as in Definition \ref{gpd}, we have $\underline{\sigma}\coloneqq (0,0,\partial_t\phi_0(0), \slashed{\nabla}\phi_0(0))\in \mathcal{G}_\md$.
Suppose also that $\phi_\mi(0)=0$.
\end{ass}
The condition $\phi_\mi(0)=0$ is needed to insure that the incoming pulse is not negligible near $0$.  

The main theorem is stated in terms of incoming and reflected profiles, $U_\mi$ and $U_\mr$, that describe the transport of oscillations.   Each function $U_\bullet$ for $\bullet=\mr,\mi$  is the unique moment-zero  primitive in $\theta$ of a moment-zero function $W_\bullet(t,x,\theta)\in L^2(\Omega_T\times \mathbb{R})$ that is constructed to satisfy the transport equations \eqref{nl3} and \eqref{nl4}.

We proceed to define particular subsets of $\Omega_T$, $J_\mr$ and $J_\mi$, that {contain} the supports of $W_\mr$ and $W_\mi$. 
From \eqref{ezb} we see that characteristics of $\phi_\mi$ are tangent to $x_1=0$ precisely at points of the \emph{grazing set}
\begin{align*}
G_{\phi_\mi}\coloneqq \{(t,0,x')\in U \mid \partial_{x_1}\phi_\mi(t,0,x')=0\}.
\end{align*}
The first of our two main geometric assumptions is the following \emph{grazing set assumption}.

\begin{ass}[Regularity of the grazing set]\label{A02}
The set $G_{\phi_\mi}$ is a codimension two $C^1$ submanifold of $\mathbb{R}^{n+1}$ near $0\in G_{\phi_\mi}$.   That is, there exists a $C^1$ function $\Xi(t,x)$ defined near $0$ such that $(\partial_t\Xi,\nabla\Xi)(0,0) \neq 0$, 
\begin{align*}%\label{ezd}
G_{\phi_\mi}=\{(t,x)\in U \mid x_1=0 \ \text{ and }\ \Xi=0\},
\end{align*}
and  the vector field $T_{\phi_\mi}$ is transverse to the $n$-dimensional hypersurface $\{\Xi=0\}$ at $0$.
Moreover, we require
\begin{align}\label{A02a}
\{(t,x',\partial_t\phi_0(t,x'), \slashed{\nabla}\phi_0(t,x')) \mid (t,0,x')\in G_{\phi_\mi}\}\subset \cG_\md.
\end{align}
\end{ass}

\begin{rem}
(i) When the origin is a point of \emph{first}-order tangency,  it was shown in \cite{cheverry1996} that Assumption \ref{A02} always holds and that $\Xi$ can be taken to be a $C^\infty$ function. In fact, one can take $\Xi(t,x)=\partial_{x_1}\phi_\mi(t,0,x')$ in that case.\footnote{Examples in \cite{wangwil} show that this choice of $\Xi$ does not work in cases of higher order tangency.}   When the origin is a point of higher than first-order tangency, verifying this assumption can be difficult, and sometimes it fails to hold!  In the papers \cite{wangwil, wangwil2} we present large classes of examples where this assumption is satisfied for all orders of tangency.    However, we also show in \cite{wangwil2} that this assumption sometimes fails even in the case of the basic motivating example, Example \ref{fundex}, and even when the incoming phase is linear or nonlinear with  spherical level sets.   

(ii)  In Example \ref{fundex}, the condition \eqref{A02a} follows easily from the convexity of the obstacle $\cO$.

(iii) An important consequence of Assumption \ref{A02} for our purposes is that the flowout of $G_{\phi_\mi}$ along integral curves of $T_{\phi_\mi}$ is a $C^1$ hypersurface in $\mathbb{R}^{n+1}$ near $0$.
\end{rem}

Let $\mathrm{SB}=\mathrm{SB}_+\cup \mathrm{SB}_-$ be the $C^1$ hypersurface in $\mathbb{R}^{n+1}$  which is the flowout   of $G_{\phi_\mi}$ along characteristics of $\phi_\mi$.   More precisely, $\mathrm{SB}$ is the union of the forward and backward flowouts of $G_{\phi_\mi}$, $\mathrm{SB}_\pm$ respectively,  along integral curves of $T_{\phi_\mi}$.\footnote{By the ``forward flowout" we mean the flowout along integral curves for which $t$ increases as the curve parameter increases.}  We call $\mathrm{SB}_+$ the \emph{shadow boundary}.

\begin{figure}[t]
\begin{center}
    \includegraphics[scale=0.45]{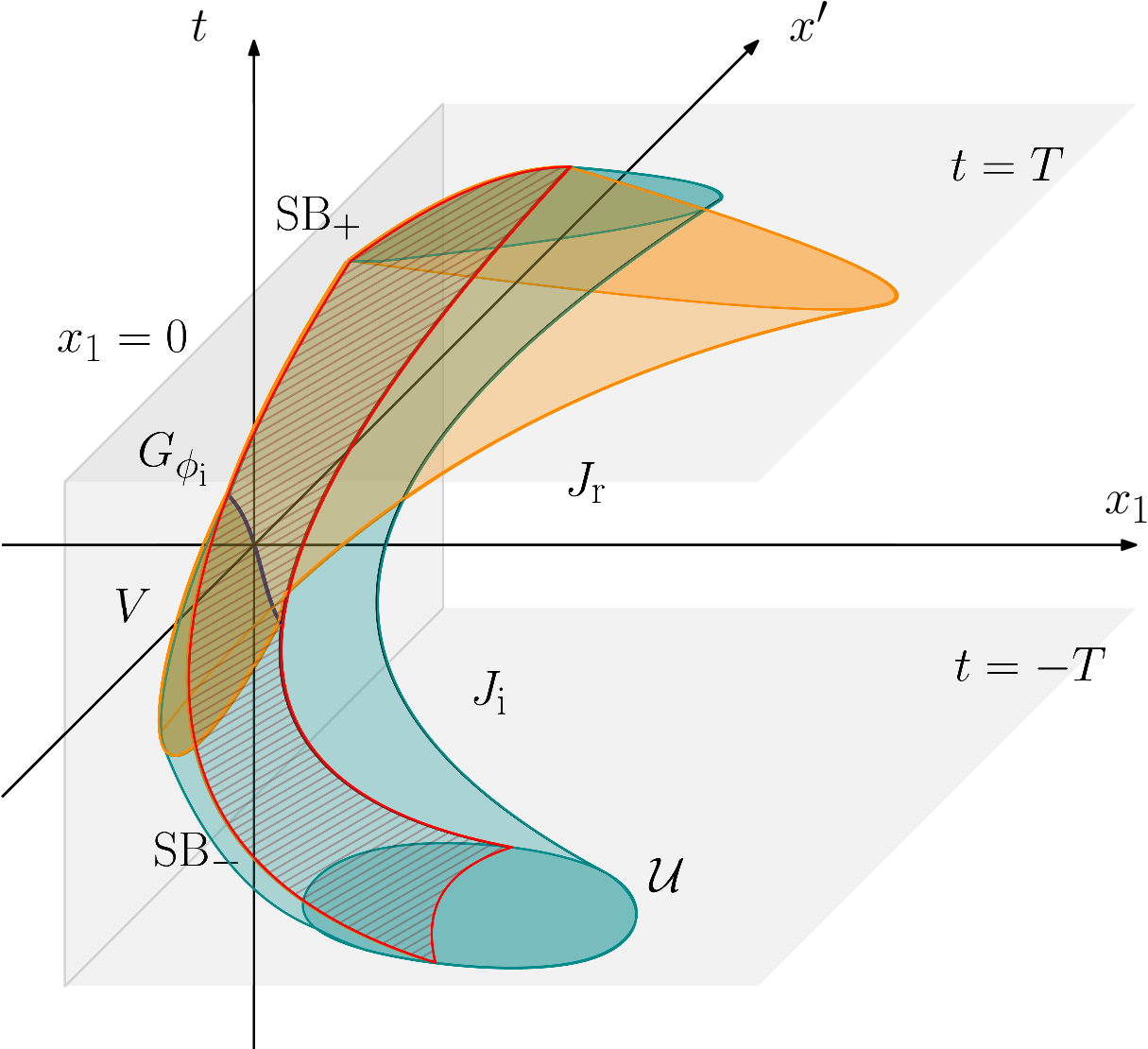}
    \caption{Green domain: forward flowout in $\{x_1\geq 0\}$ of the characteristic vector field $T_{\phi_\mi}$ associated to the incoming phase $\phi_\mi$. Yellow domain: forward flowout of the characteristic vector field $T_{\phi_\mr}$ associated to the reflected phase $\phi_\mr$. Dark curve on the boundary $\{x_1=0\}$: the grazing set $G_{\phi_\mi}$. Red surfaces: $\mathrm{SB}_{\pm}$, forward and backward flowouts of the grazing sets along characteristics of $T_{\phi_\mi}$. }
    \label{shadow}
\end{center}
\end{figure}

With $U_\mi$ as in \eqref{in2}, we take initial data for the incoming profile $W_\mi=\partial_\theta U_\mi$: 
\begin{align*}%\label{ezf}
W_\mi(-T,x,\theta)=\frac{V_1(x,\theta)}{\partial_t\phi_\mi(-T,x)}\eqqcolon a(x,\theta).
\end{align*}
We are interested in the behavior of oscillations transported by rays that reflect off and graze the boundary near $0$, so it is no restriction to assume that the $x$-support of $a$ is small and located near $\mathrm{SB}_-\cap \{t=-T\}.$ 
For $T>0$ small this allows us to choose an $n$-dimensional closed ball $\cU$ such that
\begin{align}\label{ezg}
\begin{split}
\cU\subset \{(-T,x) \mid x_1>0\} \ \text{ and } \ x\text{-support of } \ a\subset \mathring{\cU};
\end{split}
\end{align}
see Figure \ref{shadow}.
For points $(-T,y)\in \cU$ and for $s\geq 0$ let 
\begin{align*}
(t,x)=Z_\mi(s,y), \ \text{ where } \ Z_\mi(0,y)=(-T,y),
\end{align*}
denote the forward flow map determined by $T_{\phi_\mi}$.   We refer to $Z_\mi$ as the 
\emph{the incoming flow map}; it is a $C^\infty$ diffeomorphism onto its range, since $T_{\phi_\mi}$ is transverse to surfaces $t=c$
for $|c|$ small.   Moreover the range of $Z_\mi$ contains an $\mathbb{R}^{n+1}$-neighborhood of $0$.

Now define the flowout of $\cU$ under $T_{\phi_\mi}$ in $\Omega_T$ to be
\begin{align}\label{e2a}
J_\mi=\{Z_\mi(s,y) \mid 0\leq s\leq s(y), \ (-T,y)\in \cU\} \eqqcolon Z_\mi(\cD^\mi)\subset \Omega_T,
\end{align}
where $s(y)$ is the value of $s$ for which the $x$-component of $Z_i(s,y)$ is $0$ when the integral curve leaves $\{x_1\geq 0\}$, and is the value of $s$ for which the $t$-component of $Z_\mi(s,y)$ is $T$ when the integral curve remains inside $\{x_1\geq 0\}$.

Letting $V\coloneqq J_\mi\cap \{x_1=0\}$, we proceed to outline the construction of $\phi_\mr$. 
Let $\gamma_\mr(s;t_0,0,x_0')$ be the bicharacteristic of $p$ passing through $(t_0,0,x_0')\in V$ at $s=0$ with initial conditions
\begin{align*}
\gamma_\mr(t_0, 0,x_0')=(t_0,0, x_0', \partial_t\phi_\mi(t_0,0,x_0'), -\partial_{x_1}\phi_\mi(t_0,0,x_0'), \slashed{\nabla}\phi_{\mi}(t_0,0,x_0') ).
\end{align*}
Write  
\begin{align}\label{e4u}
\gamma_\mr(s;t_0,0,x_0')=(t,x,\tau,\xi)(s,t_0,x_0'),
\end{align}
 and for $(t_0,0,x_0')\in V$ and $s\geq 0$ define the \emph{reflected flow map}
\begin{align}\label{e3}
Z_\mr(s,t_0,x_0') \coloneqq (t,x)(s; t_0, x_0').
\end{align}
Parallel to $J_\mi$  we define the flowout of $V$
\begin{align}\label{e4}
\begin{split}
&J_\mr=\{(t,x)=Z_\mr(s,t_0,x_0') \mid  0\leq s\leq s(t_0,x_0'), \ (t_0,0,x_0')\in V \}\eqqcolon Z_\mr(\cD^\mr)\subset \Omega_T,
\end{split}
\end{align}
where $s(t_0, x_0')$ is the value of $s$ for which the $t$-component of $Z_\mr(s,t_0, x_0')$ is $T$;  see Figure \ref{shadow}.

We now make our second crucial geometric assumption.

\begin{ass}[Reflected flow map $Z_\mr$]\label{A03}
Let 
$$V_\mr \coloneqq \{(t, x') \mid (t,0,x')\in V\} \ \text{ and } \ \mathring{V}_\mr=\{(t,x') \mid (t,0,x')\in V\setminus G_{\phi_\mi}\}$$ for $V$ as above.    The set $\cU$ as in \eqref{ezg} (which determines $V$) and  $s_0>0$ can be chosen so that the map
\begin{align*}
Z_\mr:[0,s_0)\times V_\mr\to \Omega_T
\end{align*}
is a homeomorphism onto its range $J_\mr$,  and so that 
\begin{align*}
Z_\mr:[0,s_0)\times \mathring{V}_\mr\to \Omega_T
\end{align*}
is a  $C^\infty$ diffeomorphism onto its range $\mathring{J_\mr}$.
\end{ass}

\begin{rem}
\textup{
In \cite{wangwil} we showed that Assumption \ref{A03} 
is satisfied by the problem \eqref{in1} for general incoming phases $\phi_\mi$,  when the origin is a point of first-order tangency.\footnote{A result like this was formulated in \cite{cheverry1996}, but the proof there applied to a modified map obtained by truncating the Taylor series of $Z_\mr$ at order two.}   As with Assumption \ref{A02}, when the origin is a point of higher than first-order tangency, verifying this assumption can be difficult.  In \cite{wangwil} we showed that Assumption \ref{A03} always holds in the setting of Example \ref{fundex} 
when the incoming phase $\phi_\mi$ is linear.   In \cite{wangwil2} we verified the assumption in Example \ref{fundex} for general incoming phases with convex level sets, which includes the case of spherical level sets.  
These proofs  apply to all orders of grazing and, in fact, do not depend on Assumption \ref{A02}. } 
\end{rem}

Assumption \ref{A03} allows us to define $Z_\mr^{-1}: J_\mr\to [0,s_0)\times \mathring{V}_\mr$.    By the method of characteristics \cite{williams2022} we obtain the reflected phase $\phi_\mr$ as the solution of $p(t,x,\partial_t\phi_\mr, \nabla\phi_\mr)=0$ on  $\mathring{J_\mr}$ which for $(t,x')\in V_\mr$ satisfies
\begin{align*}%\label{e4z}
\begin{split}
\phi_\mr(t,0,x') & =\phi_\mi(t,0,x'), \\
\partial_{x_1}\phi_\mr(t,0,x') & =-\partial_{x_1}\phi_\mi(t,0,x').
\end{split}
\end{align*}
Define $\mathrm j: [0, s_0)\times V_\mr\to V$ such that $\mathrm j(s,t_0, x_0')\coloneqq (t_0,0,x_0')$.
%If $(s,t_0,x_0')=Z_\mr^{-1}(t,x)$, define $Z_r^{-1}(x,y,t)'\coloneqq (0,y',t')$.  
The method of characteristics yields the formulas\footnote{Here $(\tau,\xi)$ is as in \eqref{e4u}.}
\begin{subequations}\label{e4y}
    \begin{align}
        \label{e4ya}
        \phi_\mr(t,x) & =\phi_\mi \left((\mathrm j\circ Z_\mr^{-1})(t,x)\right), \\
        \label{e4yb}
        (\partial_t\phi_\mr, \nabla\phi_\mr)(t,x) & =(\tau, \xi)\left(Z_\mr^{-1}(t,x)\right),
    \end{align}
\end{subequations}
which show that 
\begin{align*}%\label{e4x}
\phi_\mr\in C^\infty\left(\mathring{J_\mr}\right) \ \text{ but we have only } \ \phi_\mr\in C^1\left({J_\mr}\right).
\end{align*}
In particular,  $\phi_\mr$  is only $C^1$ up to the shadow boundary $\mathrm{SB}_+$.    Moreover, from \eqref{e4u}, \eqref{e3}, and \eqref{e4yb}, we see that for any given $(t,x')\in V_\mr$,  the curve $s\mapsto Z_\mr(s,t,x')$ is an integral curve of the vector field $T_{\phi_\mr}$ defined in \eqref{ezb}.   We call that curve a forward characteristic of $P$ associated to $\phi_\mr$.  

\begin{ass}\label{exv}
The reflected phase $\phi_\mr$ satisfies $\partial_{x_1}\phi_\mr\neq 0$ at all points of $\mathring{J_\mr}$.\footnote{Recall that $\partial_{x_1}\phi_\mr=-\partial_{x_1}\phi_\mi$ on $J_\mr\cap \{x_1=0\}$, so $\partial_{x_1}\phi_\mr\neq 0$ on $V\setminus G_{\phi_\mi}$.}
\end{ass}

\begin{rem}
(i)  Assumption \ref{exv} always holds when the origin is a  first-order grazing point.   
This follows easily from \eqref{e4yb} and the fact that $-\partial_{x_1} q(0,\underline \sigma)>0$ for $q$ as in \eqref{d1}.   In cases of higher order grazing we have $-\partial_{x_1} q(0,\underline \sigma)=0$, so it is harder to check this assumption.   
In \S \ref{rphase} we  prove  that Assumption \ref{exv} holds for grazing of any order in the setting of our motivating example, Example \ref{fundex}, that is, whenever $P$ is the wave operator acting in the exterior of a smooth convex obstacle.   Indeed, the validity of the assumption in such cases is strongly suggested by the ``defocusing" nature of reflected rays illustrated  in Figures 5 and 6 of \cite{wangwil}.

(ii) It was noticed in \cite{cheverry1996} in the case of first-order grazing that the inverse of $Z_\mr$ becomes singular near the grazing set; the jacobian determinant of $Z_\mr^{-1}$ blows up roughly like $1/(\text{distance to } G_{\phi_\mi})$.   Moreover,
$\phi_\mr$ fails to be $C^2$ up to the shadow boundary.  
In \cite{wangwil} we observed that the singularity of the  jacobian determinant of $Z_\mr^{-1}$ worsens and the blow-up becomes more complicated as the order of grazing increases.
\end{rem}

Here is our main result.
\begin{theo}\label{mta}
Consider the problem \eqref{in1} under the structural Assumptions \ref{amr1}, \ref{amr1z}, Assumptions \ref{amr3},  \ref{A1}, and \ref{exv} on the incoming phase $\phi_\mi$,  and  the grazing set and reflected flow map Assumptions \ref{A02} and \ref{A03}.
Suppose that $a=(W_\mi)_{|t=-T}$ lies in $H^1(\{t=-T\})$ and has compact $(x,\theta)$-support satisfying \eqref{ezg}.\footnote{Thus, $W_\mi$ has support near an incoming characteristic that  grazes the boundary to possibly high order at $0\in G_{\phi_\mi}$.}      
Then if $T>0$ is small enough, the exact solution $u^\eps\in H^1(\Omega_T)$ to \eqref{in1} satisfies 
 \begin{align}\label{mtb}
u^\eps(t,x)_{|\Omega_T}\sim_{H^1}\sqrt{\eps} U_\mr\left(t,x,\frac{\phi_\mr(t,x)}{\eps}\right)+\sqrt{\eps} U_\mi\left(t,x,\frac{\phi_\mi(t,x)}{\eps}\right).
\end{align}
Here $U_\bullet(t,x,\theta)$ for $\bullet=\mr, \mi$  is given by $U_\bullet(t,x,\theta)=\int^\theta_{-\infty}W_\bullet(t,x,s) \,\mathrm ds$
and the functions 
$$W_\mr\in L^2(\Omega_T\times \mathbb{R}),\;  W_\mi\in  L^2(\Omega_T\times \mathbb{R})$$
  are constructed to satisfy the profile equations \eqref{nl3}, \eqref{nl4}.  In particular, $W_\bullet$ has support in $J_\bullet$ for $\bullet=\mr, \mi$.   The meaning of $\sim_{H^1}$ in \eqref{mtb} is given in Definition \ref{meaninga}.
\end{theo}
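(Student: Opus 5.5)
The plan follows the structure sketched in the introduction. The profiles come first. $W_\mi$ is solved from the nonlinear transport equation \eqref{nl4} along integral curves of $T_{\phi_\mi}$ on $J_\mi$, with data $a$. The boundary trace of $W_\mi$ on $V\setminus G_{\phi_\mi}$ then serves, via the Dirichlet condition (reflection coefficient $-1$), as data for $W_\mr$. We solve \eqref{nl3} along the characteristics $s\mapsto Z_\mr(s,t_0,x_0')$ using Assumption \ref{A03}.

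The $L^2$ bounds for $W_\mr$ come from the cancellation noted in the introduction: the term $(P_1\phi_\mr)W_\mr$ is absorbed by the Jacobian of $Z_\mr$ when the integral is pulled back to $[0,s_0)\times V_\mr$. Using the Lipschitz bound of Remark \ref{localc}(iii), Gronwall then gives $W_\bullet\in L^2(\Omega_T\times\bR)$.

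Next we introduce the truncation. We cut off the boundary data of $W_\mr$ (and correspondingly the data of $W_\mi$) at distance $\mu$ from $G_{\phi_\mi}$, producing $W^\mu_\bullet$ supported away from $\mathrm{SB}_+$. Off $\mathrm{SB}_+$, $\phi_\mr$ is $C^\infty$ by Assumptions \ref{A03} and \ref{exv}. Differentiating the transport equations and using that $f$, $\partial_tf$, $\nabla f$ are Lipschitz (Assumption \ref{amr1z}) gives $\|W^\mu_\bullet\|_{H^1}\lesssim_\mu 1$. We also show $W^\mu_\bullet\to W_\bullet$ in $L^2$ as $\mu\to0$, by the same cancellation-based $L^2$ stability estimate applied to differences.

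We then replace $W^\mu_\bullet$ by the moment-zero approximation $W^{\mu,\omega}_\bullet$ of Appendix~\ref{mz}, take the decaying primitives $U^{\mu,\omega}_\bullet$, and smooth. This gives the $U^{\ell,\eps}_\bullet$ of Definition \ref{meaninga}, indexed by a diagonal choice of $(\mu,\omega)\to0$.

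For the error analysis, set $u^\eps_\ma=\sqrt\eps(U_\mi^{\mu,\omega}(\cdot,\phi_\mi/\eps)+U_\mr^{\mu,\omega}(\cdot,\phi_\mr/\eps))$ and compute $Pu^\eps_\ma-f(\cdot,u^\eps_\ma,\partial u^\eps_\ma)$. The $\eps^{-3/2}$ terms vanish by the eikonal equations. The $\eps^{-1/2}$ terms reduce to the profile equations, up to three errors: the moment-zero error, the error from replacing $f$ evaluated at the sum of the two pulses by the sum of separate interactions, and the truncation error.

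The moment-zero and truncation errors, after multiplication by $\sqrt\eps$, are controlled in $L^2$ by the crude estimate Proposition \ref{crude}, i.e.\ \eqref{in3za}, since only $H^1$ norms for fixed $\mu$ enter. The remaining $O(\sqrt\eps)$ terms also carry $\eps^{-1/2}$-sized factors, so we need \eqref{in3za} with $\mu$-dependent constants that are then killed by $\eps\to0$ first.

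The interaction term is handled by noting that $\phi_\mi$ and $\phi_\mr$ are transversal on $\mathrm{supp}\,W^\mu_\mr$. Pulses concentrated on the transversal hypersurfaces $\phi_\mi=0$ and $\phi_\mr=0$ overlap only on a set of measure $O(\eps^2)$ in the relevant scaling. By the Lipschitz bound, the cross terms are then $o(1)$ in $L^2$.

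Boundary data is matched exactly because $\phi_\mr=\phi_\mi$ on $x_1=0$ and $U_\mr=-U_\mi$ there. The initial data differ by $o(1)$ in $H^1\times L^2$ by the polarization condition \eqref{a1czb}. The energy estimate for the Dirichlet problem for $P$ (Appendix~\ref{tools}), combined with Lipschitz $f$ and Gronwall, bounds $\|u^\eps-u^\eps_\ma\|_{H^1}$ by the $L^2$ norm of the residual plus the data errors. To return to $W_\bullet$ itself, Proposition \ref{a11z} shows that the pulse size $\eps^{-1/2}\|W(\cdot,\phi/\eps)\|_{L^2}$ is controlled by the trace norm on $\{\phi=0\}$. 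Together with $W^\mu\to W$ in $L^2$, this yields the order of limits $\lim_\ell\limsup_\eps$.

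The hardest step is the $H^1$ bound for $W^\mu_\mr$: differentiating \eqref{nl3} produces $\partial(P_1\phi_\mr)$, which blows up at $\mathrm{SB}_+$ and is not cancelled. I would first pull everything back by $Z_\mr$ to characteristic coordinates. There the equation is an ODE in $s$, and on the truncated support $(t_0,x_0')$-derivatives of $Z_\mr^{-1}$ are bounded by $\mu$-dependent constants. I accept non-uniformity in $\mu$ and recover uniformity in the final limit only through the $L^2$ convergence $W^\mu\to W$, estimating the difference term through the refined trace limit \eqref{in3zb} rather than through $H^1$ norms.
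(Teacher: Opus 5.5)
Your overall architecture matches the paper's: solve $W_\mi$ first, get $L^2$ bounds for $W_\mr$ from the cancellation, truncate near $\mathrm{SB}$ to get $\mu$-dependent $H^1$ bounds, then regularize, take moment-zero approximations, and close with the Sakamoto estimate and Gronwall. Deriving the $\mu$-dependent $H^1$ bound for $W^\mu_\mr$ in characteristic coordinates is a workable substitute for the paper's tangential differentiation plus Assumption \ref{exv}.

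\textbf{The gap: the truncation error is not $o(1)$ and is not controlled.} Because $W^\mu_\mi$ is supported away from $\mathrm{SB}$, its datum at $t=-T$ is a truncation $a^\mu$ of $a$ near $\mathrm{SB}\cap\{t=-T\}$. But $u^\eps(-T)$ is built from the untruncated $V_0$, with $\partial_\theta V_0=a$. So the initial data do \emph{not} differ by $o(1)$. The gradient mismatch contains
$\eps^{-1/2}(a-a^\mu)(x,\phi_\mi(-T,x)/\eps)\nabla\phi_\mi(-T,x)$.
By Proposition \ref{a20}, its $L^2$ norm tends as $\eps\to0$ to
$\bigl\|(a-a^\mu)\,|\nabla_x\phi_\mi(-T,\cdot)|^{-1/2}\bigr\|_{L^2(\cS_{-T}\times\bR)}$,
which is $O(1)$ for each fixed $\mu$. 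The polarization condition \eqref{a1czb} only ensures that a single phase emerges; it does not touch this term. If you instead truncate only the boundary datum of $W_\mr$, an analogous $O(1)$ defect moves into the boundary term, and your claim that $U_\mr=-U_\mi$ on $x_1=0$ fails.

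This error cannot be handled by the tools you invoke. The $L^2$ convergence $W^\mu\to W$ says nothing about traces on $\{\phi=0\}$. Nor does \eqref{in3za} help, because $\|a-a^\mu\|_{H^1}$ does not tend to zero when $a$ has nonzero trace on $\mathrm{SB}\cap\{t=-T\}$.

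\textbf{What is missing is Lemma \ref{tr1}.} For $T$ small, $\mathrm{SB}\cap\{t=-T\}$ is transversal to $\cS_{-T}=\{\phi_\mi(-T,\cdot)=0\}$. At $T=0$ this holds because every vector in $\cT_0\mathrm{SB}$ has vanishing $x_1$-component, while $\partial_{x_1}\phi_\mi(0)=0$ puts the $x_1$-direction in $\cT_0\cS_0$. With $a^\mu=\chi^\mu a$ one then has $(\chi^\mu)_{|\cS_{-T}}\to 1$ a.e.\ on $\cS_{-T}$. The traces therefore converge, and the initial-data error becomes $o_\mu(1)+o_\eps(1)$. This is where the refined limit \eqref{in3zb} actually enters, not in an interior comparison of $W^\mu$ with $W$. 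Without transversality, $\mathrm{SB}\cap\cS_{-T}$ could have positive measure in $\cS_{-T}$, and no cutoff vanishing near $\mathrm{SB}$ could make this error small.

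\textbf{Two smaller corrections.}
\begin{enumerate}
\item There is no truncation error in the interior residual, since the $W^\mu_\bullet$ solve the profile equations exactly with data $a^\mu$. Also, Proposition \ref{crude} is not \eqref{in3za}: it has no $\eps^{-1/2}$ gain, so it could not control an $\eps^{-1/2}$-scaled error.
\item Smoothing in $x_1$ destroys the exact relation $W_\mr=-W_\mi$ at $x_1=0$. The paper recovers it only up to $o_\sigma(1)$ in $H^1(\Omega^\flat_T\times\bR)$. It does so by mollifying tangentially first, then with a one-sided $x_1$-mollifier, and applying Lemma \ref{tl}; the needed $x_1$-regularity comes from Assumption \ref{exv}.
\end{enumerate}
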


The reader may have noticed that there are problems with making sense of $U_\mr(t,x,\phi_\mr/\eps)$ as an element of $H^1(\Omega_T)$, even for $\eps$ fixed; indeed, this is the restriction of $U_\mr(t,x,\theta)$ to a set of measure zero in 
$\Omega_T\times \bR_\theta$.  
We therefore use the following definition, which is similar to ones used in \cite{jmr1995tams, cheverry1996, jmr1996cpam, jmr2000mams, wangwil} for problem involving wavetrains, to give an asymptotic meaning to \eqref{mtb}.

\begin{defn}\label{meaninga}
The statement \eqref{mtb} means that for any sequence $\delta_\ell\searrow 0$ there exist $C^\infty$ profiles $W_\bullet^\ell(t,x,\theta)$, $\bullet=\mr, \mi$, with compact supports in $\Omega_T\times \bR_\theta$ and $\eps_\ell>0$ such that 
\begin{align}\label{nl10}
\|W_\mr-W^\ell_\mr\|_{L^2(\Omega_T\times\bR_\theta)}+\|W_\mi-W^\ell_\mi\|_{L^2(\Omega_T\times\bR_\theta)} <\delta_\ell,\;\;\partial_\theta (U_\mr,U_\mi)=(W_\mr,W_\mi),
\end{align}
and
\begin{align}\label{nl11}
\left\|u^\eps-\sqrt{\eps}\left(U^\ell_{\mr,\eps^d}\left(t,x,\frac{\phi_\mr}{\eps}\right)+U^\ell_{\mi,\eps^d}\left(t,x,\frac{\phi_\mi}{\eps}\right)\right)\right\|_{H^1(\Omega_T)}<\delta_\ell \ \text{ for } \ \eps\in (0,\eps_\ell].
\end{align}
Here for $\bullet=\mr,\mi$, we let $W^\ell_{\bullet,\eps^d}(t,x,\theta)$ be a moment zero approximation\footnote{Moment zero approximations are defined in Appendix \ref{mz}. In the notation of that appendix we take $\omega=\eps^d$ here.} of $W^\ell_\bullet(t,x,\theta)$ for $0<d<1/2$ and 
$U^\ell_{\bullet, \eps^d}(t,x,\theta)$ is the unique moment zero primitive in $\theta$ of $W^\ell_{\bullet,\eps^d}(t,x,\theta)$.   
Both $W^\ell_{\bullet,\eps^d}$ and $U^\ell_{\bullet, \eps^d}$ are 
$C^\infty$ with compact support in $(t,x)$ and rapidly decaying in $\theta$.
\end{defn}

\section{The size of large amplitude pulses in \texorpdfstring{$L^2$}{l2}.}\label{size}

In this section we prove the basic estimates that allow us to compare the size of a pulse profile like $W(t,x,\theta)$ to the size of the 
pulse given by $W(t,x,\phi/\eps)$.   These estimates are used repeatedly in \S \ref{fnpz} and \S \ref{diff}.
Instead of $\Omega_T$ as in previous sections, we work on $\tilde \Omega_T\coloneqq [0,T]\times \RR^n$ in this section. We still use $(t,x)$ as coordinates for points in $\tilde\Omega_T$ and $(\tau,\xi)$ for the dual coordinates.

\begin{prop}\label{crude}
Let $a(t,x,\theta)\in L^2(\tilde\Omega_T,H^1(\mathbb{R}_\theta))$ and $\phi\in C^1(\tilde \Omega_T; \RR)$.  There exists $C>0$ such that for $\eps\in (0,1]$ we have 
\begin{align*}%\label{a7z}
\|a(t,x,\phi/\eps)\|_{L^2(\tilde\Omega_T)}\leq C\|a(t,x,\theta)\|_{L^2(\tilde\Omega_T,H^1(\bR))}.
\end{align*}
\end{prop}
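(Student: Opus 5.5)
The plan is to use the one-dimensional Sobolev embedding $H^1(\mathbb{R}_\theta)\hookrightarrow L^\infty(\mathbb{R}_\theta)$ pointwise in $(t,x)$. For almost every $(t,x)\in\tilde\Omega_T$ the function $\theta\mapsto a(t,x,\theta)$ lies in $H^1(\mathbb{R})$. After modification on a null set in $\theta$, it is therefore continuous and satisfies
\begin{align*}
\sup_{\theta\in\mathbb{R}}|a(t,x,\theta)|\leq C_0\|a(t,x,\cdot)\|_{H^1(\mathbb{R})},
\end{align*}
with $C_0$ an absolute constant. For instance, one may take $C_0=1$ from $|g(\theta)|^2\leq \int|g||g'|\,\mathrm d\theta\leq \|g\|_{H^1}^2$, obtained by writing $g(\theta)^2=2\int_{-\infty}^\theta g g'$.

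Evaluating at $\theta=\phi(t,x)/\eps$ then gives, for a.e.\ $(t,x)$,
\begin{align*}
|a(t,x,\phi(t,x)/\eps)|\leq C_0\|a(t,x,\cdot)\|_{H^1(\mathbb{R})}.
\end{align*}
Squaring and integrating over $\tilde\Omega_T$ yields the claim with $C=C_0$. The constant is uniform in $\eps\in(0,1]$, and in fact it uses nothing about $\phi$ beyond measurability.

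The only point requiring care is to make sense of the composition $a(t,x,\phi/\eps)$ as a measurable function on $\tilde\Omega_T$, since it restricts $a$ to a set of measure zero in $\tilde\Omega_T\times\mathbb{R}_\theta$. I would handle this by first proving the estimate for $a\in C_c^\infty(\tilde\Omega_T\times\mathbb{R})$, where the composition is classical. By the inequality just proved, the map $a\mapsto a(t,x,\phi/\eps)$ is bounded from a dense subspace of $L^2(\tilde\Omega_T,H^1(\mathbb{R}))$ to $L^2(\tilde\Omega_T)$, so it extends uniquely by density. Alternatively, one can choose the continuous-in-$\theta$ representative, which is jointly measurable as a Carath\'eodory function. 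Either definition agrees with the pointwise composition whenever the latter is meaningful, and the estimate passes to the limit. I expect no real obstacle: this is the crude bound, as opposed to the refined trace-type estimate \eqref{in3za}, which genuinely needs $(t,x)$-regularity and the nondegeneracy of $\phi$.
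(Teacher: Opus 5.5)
Your argument is correct and is essentially the paper's: both reduce the estimate to the one-dimensional embedding $H^1(\mathbb{R}_\theta)\hookrightarrow L^\infty(\mathbb{R}_\theta)$ applied uniformly in $(t,x)$, with $\phi$ playing no role. The paper proves the embedding on the Fourier side (inversion in $\theta$, Minkowski's inequality in $(t,x)$, then Cauchy--Schwarz against $\langle k\rangle^{-1}$), while you prove it on the physical side via $g^2=2\int_{-\infty}^\theta gg'$ and also address the measurability of the composition, which the paper passes over. One small quibble: the one-sided identity only gives $|g(\theta)|^2\le 2\int|g||g'|$, and you would need to average it with the $\int_\theta^\infty$ version to get the factor $1$. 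This does not affect the result, since $C_0=1$ still follows from $2\|g\|_{L^2}\|g'\|_{L^2}\le\|g\|_{H^1}^2$.
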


\begin{proof}
 Letting $\hat a$ be the Fourier transform of $a$ in $\theta$ 
 we have\footnote{Here $\langle k\rangle \coloneqq (1+k^2)^{1/2}$.}
\begin{align*}%\label{a7y}
\begin{split}
\|a(t,x,\phi/\eps)\|_{L^2(\tilde\Omega_T)} & = \left\|\frac{1}{2\pi}\int \hat a(t,x,k)e^{ik\phi/\eps}\,\mathrm d k\right\|_{L^2(\tilde\Omega_T)}\leq \frac{1}{2\pi}\int \|\hat a(t,x,k)\|_{L^2(\tilde\Omega_T)} \,\mathrm d k\\
&\lesssim \left\|\langle k\rangle^{-1}\right\|_{L^2(\bR_k)}\left \| \langle k\rangle \|\hat a(t,x,k)\|_{L^2(\tilde\Omega_T)}\right\|_{L^2(\bR_k)}\lesssim \|a(t,x,\theta)\|_{L^2(\tilde\Omega_T,H^1(\bR))}.
\end{split}
\end{align*}
This is the desired estimate.
\end{proof}

We will also need the following more precise estimates.  If $\phi(t,x)$ is a $C^1$ solution of the eikonal equation $p(t,x,\partial_t\phi, \nabla \phi)=0$ we define the vector field
\begin{align*}%\label{a11u}
T_\phi(t,x)=\partial_{\tau,\xi} p(t,x,\partial_t\phi(t,x), \nabla\phi(x,t))\cdot \partial_{t,x}.
\end{align*}

Let $K\subset \tilde{\Omega}_T$ be the  flowout under a vector field $T_\phi$, defined as in \eqref{ezb},  of a compact ball in the initial surface $t=0$.  
Here we assume $\phi$ is a $C^1$ characteristic phase for which there exists an open set $\mathcal O$ such that $\overline{K}\subset \mathcal O\subset \tilde \Omega_T$ and 
\[ F: \mathcal O \to F(\mathcal O), \ (\phi(t,x),x)\mapsto (t,x) \]
is a $C^1$ diffeomorphism. 
We will use the change of variables 
\begin{align}\label{a11h}
(s,x)=F^{-1}(t,x)\coloneqq (\phi(t,x),x), \quad (t,x)=F(s,x)=(t(s,x),x).
\end{align}

\begin{prop}\label{a11z} 
Assume $W(t,x,\theta)\in H^1(\tilde\Omega_T\times\bR)$ has  compact $(t,x,\theta)$-support with $(t,x)$ support contained in the compact set $\overline{K}$ as above.
Then
\begin{enumerate}
    \item[(i)] There exists a constant $C>0$ such that for $\eps\in (0,1]$:
\begin{align}\label{a11}
\left\|\epsilon^{-\frac12}W(t,x,\phi/\eps)\right\|_{L^2(\tilde\Omega_T)}\leq C\|W\|_{H^1(\tilde\Omega_T\times \bR)}.
\end{align}

\item[(ii)]  If in addition to the above assumptions $W(t,x,\theta)$ is continuous, then
\begin{align}\label{a11x}
\lim_{\eps\to 0+}\left\|\epsilon^{-\frac12}W(t,x,\phi/\eps)\right\|^2_{L^2(\tilde\Omega_T)}=\int |W(t(0,x),x,\theta)|^2|\det F'(0,x)| \,\mathrm dx \mathrm d\theta.
\end{align}
In particular, $\left\|\epsilon^{-\frac12} W(t,x,\phi/\eps)\right\|^2_{L^2(\tilde\Omega_T)}$ is generally of size $\sim 1$ for $\eps$ small and is determined by the restriction of $W(t,x,\theta)$ to the hypersurface $\phi^{-1}(0)\times\bR_\theta$, that is, by $W(t,x,\theta)_{|(t,x)\in \phi^{-1}(0)}$. 
\end{enumerate}
\end{prop}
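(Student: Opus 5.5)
The plan is to use the change of variables \eqref{a11h} to straighten the phase, so that $\phi$ becomes the coordinate $s$. Set $\tilde W(s,x,\theta)\coloneqq W(t(s,x),x,\theta)$. Since $F$ is a $C^1$ diffeomorphism on a neighborhood of $\overline K$, the Jacobian $|\det F'(s,x)|$ is bounded above and below on the (compact) support, and $\tilde W\in H^1$ with $\|\tilde W\|_{H^1}\lesssim\|W\|_{H^1}$ (chain rule for $H^1$ under $C^1$ diffeomorphisms). Then
\begin{align*}
\eps^{-1}\|W(t,x,\phi/\eps)\|^2_{L^2(\tilde\Omega_T)}=\eps^{-1}\int |\tilde W(s,x,s/\eps)|^2\,|\det F'(s,x)|\,\mathrm ds\,\mathrm dx .
\end{align*}
Next substitute $s=\eps\sigma$, which removes the factor $\eps^{-1}$:
\begin{align*}
\eps^{-1}\|W(t,x,\phi/\eps)\|^2_{L^2}=\int |\tilde W(\eps\sigma,x,\sigma)|^2\,|\det F'(\eps\sigma,x)|\,\mathrm d\sigma\,\mathrm dx .
\end{align*}
Both parts follow from this formula.

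For (i), we need a bound on $\sup_{s}\int|\tilde W(s,x,\sigma)|^2\,\mathrm dx\,\mathrm d\sigma$. This is the trace of an $H^1$ function on the hyperplanes $s=\mathrm{const}$ in $(s,x,\sigma)$-space, which is controlled by the $H^1$ norm. Explicitly, $\tilde W$ has compact support, so the one-dimensional Sobolev inequality in $s$ gives $\sup_s|g(s)|^2\le \int(|g|^2+|\partial_s g|^2)\,\mathrm ds$ for each fixed $(x,\sigma)$. Integrating this over $(x,\sigma)$ gives
\begin{align*}
\int |\tilde W(\eps\sigma,x,\sigma)|^2\,\mathrm dx\,\mathrm d\sigma\le \|\tilde W\|^2_{H^1}.
\end{align*}
This bound holds even though the evaluation point $s=\eps\sigma$ depends on $\sigma$, because the inequality is pointwise in $(x,\sigma)$. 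Combined with boundedness of $\det F'$, this gives \eqref{a11}. The only care needed is to justify the step for $H^1$ functions: prove it for smooth compactly supported functions and extend by density, using that the right side of \eqref{a11} controls the left uniformly in $\eps$.

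For (ii), continuity of $W$ and compact support give pointwise convergence as $\eps\to0$:
\begin{align*}
|\tilde W(\eps\sigma,x,\sigma)|^2|\det F'(\eps\sigma,x)|\to|\tilde W(0,x,\sigma)|^2|\det F'(0,x)| .
\end{align*}
The integrand is dominated by $C\,\|W\|_\infty^2\,\mathbf 1_{B}(x,\sigma)$ for a fixed compact set $B$, because the $\sigma$-support is bounded independently of $\eps$ (the $\theta$-support of $W$ is compact). Dominated convergence then yields \eqref{a11x}, using $\tilde W(0,x,\sigma)=W(t(0,x),x,\sigma)$.

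The main obstacle is the uniform trace estimate in (i) with the $\sigma$-dependent slice $s=\eps\sigma$, together with the low regularity: $F$ is only $C^1$, so $\tilde W$ is merely $H^1$. The pointwise-in-$(x,\sigma)$ one-dimensional Sobolev bound handles the first issue, and the density argument handles the second.
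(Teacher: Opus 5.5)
Your proposal is correct and follows the paper's proof. You use the same change of variables $(s,x)=(\phi(t,x),x)$ and rescaling $s=\eps\theta$, reduce (i) to a trace bound on the slice $s=\eps\theta$, and obtain (ii) by passing to the limit under the integral over a fixed compact set. The differences are cosmetic: the paper flattens that $\eps$-dependent slice via $(\kappa,x,\theta)=(t-t(\eps\theta,x),x,\theta)$ and then cites Proposition~\ref{trace}, whereas you apply the one-dimensional Sobolev bound pointwise in $(x,\sigma)$ (which is exactly what that trace estimate proves), and you use dominated convergence where the paper uses uniform convergence.
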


\begin{proof}

\textbf{1. }
We compute
\begin{align}\label{a12}
\begin{split}
\frac{1}{\eps}\int |W(t,x,\phi/\eps)|^2 \,\mathrm dx \mathrm dt
& =\frac{1}{\eps}\int \left|W\left(t(s,x),x,\frac{s}{\eps}\right)\right|^2|\det F'(s,x)| \,\mathrm dx \mathrm ds\\
& =\int \left|W\left(t(\eps\theta,x),x,\theta\right)\right|^2 |\det F'(\eps\theta,x)| \,\mathrm dx \mathrm d\theta \eqqcolon A_\eps, 
\end{split}
\end{align}
where $\theta=s/\epsilon$.

\noindent To obtain \eqref{a11} we estimate
\begin{align*}%\label{a15}
\begin{split}
&A_\eps
\leq C\int\left|W\left(t(\eps\theta,x),x,\theta\right)\right|^2 \,\mathrm dx \mathrm d\theta\leq C\|W(t,x,\theta)\|^2_{H^1(\tilde\Omega_T\times \bR)}.
\end{split}
\end{align*}
For the last inequality we used the simple trace estimate of Proposition \ref{trace}.  Observe that for each $\eps$, $W\left(t(\eps\theta,x),x,\theta\right)$ is the restriction of $W(t,x,\theta)$ to a hypersurface in $(t,x,\theta)$ space,  which becomes $\kappa=0$ after the change of variable $(\kappa,x,\theta)=H(t,x,\theta)\coloneqq (t-t(\eps\theta,x),x,\theta)$.

\textbf{2. }When $W$ is continuous, we obtain the formula \eqref{a11x} 
by formally taking the limit as $\eps\to 0+$ in the integral defining $A_\eps$ in \eqref{a12}.  To justify this limit, observe that for $\eps\in [0,1]$ the function $W(t(\eps\theta,x),x)$ has $(x,\theta)$-support in a fixed compact set independent of $\eps$, and 
$$\left|W\left(t(\eps\theta,x),x,\theta\right)\right|^2 |\det F'(\eps\theta,x)|\to 
|W(t(0,x),x,\theta)|^2|\det F'(0,x)|\text{ as }\eps\to 0+$$ uniformly on that set.
\end{proof}

\begin{rem}\label{a11kz}
Let $\phi_0(x)=\phi(0,x)$.  For $Y\in H^1(\bR^n_x\times \bR_\theta)$ with compact support will also need an estimate of the form 
\begin{align}\label{a11k}
\left\|\epsilon^{-\frac12}Y(x,\phi_0/\eps)\right\|_{L^2(\bR^n)}\leq C\|Y\|_{H^1(\bR^n_x\times \bR_\theta)}\text{ for }\eps\in (0,1].
\end{align}
%In fact, without loss of generality we can assume $\partial_{x_1}\phi_0\neq 0$.
Writing $x=(x'',x_n)\in \RR^{n-1}\times \RR$, we can assume without loss of generality that $\partial_{x_n}\phi_0\neq 0$.
We can obtain \eqref{a11k}  in place of \eqref{a11h} if
\begin{align*}%\label{a11hk}
(x'',y_n)=F_0^{-1}(x'',x_n) \coloneqq (x'',\phi_0(x)), \quad (x'',x_n)=F_0(x'',y_n)=(x'',x_n(x'',y_n))
\end{align*}
to be a $C^1$ diffeomorphism on an open set $\cO_0\subset \bR^n$ that contains $\{x \mid (0,x)\in K\}$. This can be arranged locally, perhaps after modifying $K$.   
The proof of \eqref{a11} goes through without change to yield \eqref{a11k}.  When in addition $Y$ is continuous, we obtain the analogue of \eqref{a11x} with $\phi_0^{-1}(0)$ in place of $\phi^{-1}(0)$.
\end{rem}

The next proposition rephrases \eqref{a11x} and extends it to functions $W\in H^1(\tilde\Omega_T\times \bR)$ that are not necessarily continuous.

\begin{prop}\label{a20}
Let $K\subset \tilde\Omega_T$, $\phi$, and $F(s,x)$ be just as before Proposition \ref{a11z}.  Assume $W(t,x,\theta)\in H^1(\tilde\Omega_T\times\bR)$ has  $(t,x)$-support contained in the compact set $\overline{K}$.  % and $\theta$ 
Define 
$$\cS=\{(t,x)\in \tilde\Omega_T \mid \phi(t,x)=0\}.$$  
Then 
\begin{align}\label{a21}
\lim_{\eps\to 0}\frac{1}{\sqrt{\eps}} \|W(t,x,\phi/\eps)\|_{L^2(\tilde\Omega_T)}=\left\|\frac{W}{|\nabla_{t,x} \phi|^{1/2}}\right\|_{L^2(\cS\times \bR_\theta)}.
\end{align}

\end{prop}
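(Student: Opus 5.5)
We first establish \eqref{a21} for continuous $W$ and then extend it to general $W\in H^1(\tilde\Omega_T\times\bR)$ by density.

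\textbf{Continuous case.} By Proposition~\ref{a11z}(ii), the square of the left side of \eqref{a21} equals
\[
\int |W(t(0,x),x,\theta)|^2\,|\det F'(0,x)|\,\mathrm dx\,\mathrm d\theta .
\]
It remains to show that this equals the weighted surface integral over $\cS\times\bR_\theta$. The map $x\mapsto (t(0,x),x)$ parametrizes $\cS$ as a graph over $x$. Differentiating $\phi(t(s,x),x)=s$ gives
\[
\partial_s t=\frac{1}{\partial_t\phi}, \qquad \nabla_x t=-\frac{\nabla_x\phi}{\partial_t\phi}.
\]
Since $F(s,x)=(t(s,x),x)$, we get $|\det F'(0,x)|=1/|\partial_t\phi|$ on $\cS$. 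The surface measure of the graph is
\[
\mathrm d\sigma=\sqrt{1+|\nabla_x t|^2}\,\mathrm dx=\frac{|\nabla_{t,x}\phi|}{|\partial_t\phi|}\,\mathrm dx .
\]
Hence
\[
|\det F'(0,x)|\,\mathrm dx=\frac{1}{|\partial_t\phi|}\cdot\frac{|\partial_t\phi|}{|\nabla_{t,x}\phi|}\,\mathrm d\sigma=\frac{\mathrm d\sigma}{|\nabla_{t,x}\phi|},
\]
which gives \eqref{a21} for continuous $W$. The fact that $F$ is a $C^1$ diffeomorphism guarantees $\partial_t\phi\neq0$ on $\mathcal O$, so every step is justified.

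\textbf{General case.} Let $W\in H^1(\tilde\Omega_T\times\bR)$ have $(t,x)$-support in $\overline K$. We approximate it by continuous (smooth) $W_j$ with $W_j\to W$ in $H^1$. The main obstacle is keeping the supports inside a set where the change of variables is valid. Since $\overline K\subset\mathcal O$ and $\mathcal O$ is open, mollification keeps the $(t,x)$-supports in a slightly larger compact set $K'\subset\mathcal O$. If needed we also truncate in $\theta$ with a cutoff $\chi(\theta/R)$, which converges in $H^1$ as $R\to\infty$, so that compact $(t,x,\theta)$-support holds as Proposition~\ref{a11z} requires. Near the boundary $t=0$ of $\tilde\Omega_T$ we extend across it before mollifying. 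Estimate \eqref{a11} holds with $K'$ in place of $\overline K$ and a uniform constant $C$; its proof only uses the $C^1$ change of variables on a neighborhood of the support.

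Define
\[
N_\eps(W)=\eps^{-1/2}\|W(t,x,\phi/\eps)\|_{L^2(\tilde\Omega_T)}, \qquad N_0(W)=\big\||\nabla_{t,x}\phi|^{-1/2}W\big\|_{L^2(\cS\times\bR_\theta)} .
\]
Both are seminorms, so the triangle inequality gives
\[
|N_\eps(W)-N_\eps(W_j)|\le N_\eps(W-W_j)\le C\|W-W_j\|_{H^1}
\]
uniformly in $\eps$, by \eqref{a11}. Likewise, the trace estimate (Proposition~\ref{trace}) applied to the $C^1$ hypersurface $\cS\times\bR$ gives
\[
|N_0(W)-N_0(W_j)|\le C\|W-W_j\|_{H^1},
\]
using that $|\nabla_{t,x}\phi|$ is bounded below on the compact set, which holds because $\partial_t\phi\ne0$. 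A standard $3\delta$ argument then finishes the proof: first choose $j$ large, then let $\eps\to0$ using the continuous case for $W_j$. This yields \eqref{a21}. We expect the only delicate points to be the support and extension bookkeeping in the approximation and the uniformity of the constant in \eqref{a11}.
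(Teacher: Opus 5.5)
Your proposal is correct and follows the paper's proof essentially step for step. The continuous case is the same rephrasing of \eqref{a11x} via $\partial_t\phi\,\partial_s t=1$ and $\nabla\phi+\partial_t\phi\,\nabla t=0$, and the general case is the same three-term density argument using the uniform bound \eqref{a11} together with continuity of the trace on $\cS\times\bR_\theta$. Your handling of the approximants' supports is, if anything, more careful than the paper's: you allow a slightly larger compact $K'\subset\mathcal O$ and a $\theta$-cutoff, whereas the paper simply asserts that the continuous approximant can be chosen with $(t,x)$-support in $\overline{K}$.
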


\begin{proof}
\textbf{1. }When $W$ is continuous, we claim that \eqref{a21} is a rephrasing of \eqref{a11x}.    To see this write
\begin{align}\label{a22}
\left\|\frac{W}{|\nabla_{t,x} \phi|^{1/2}}\right\|^2_{L^2(\cS\times \bR_\theta)}=\int_{\cS\times\bR}\frac{|W(t,x,\theta)|^2}{|\nabla_{t,x}\phi|} \,\mathrm{dS} \mathrm d\theta \text{ where } \mathrm{dS}=\sqrt{1+|t_x|^2}\,\mathrm dx.
\end{align}
 Recall that 
$F(s,x)=(t(s,x),x)$ so $\det F'(0,x)=\partial_s t(0,x)$.  Moreover, since $\phi(t(s,x),x)=s$ it follows that 
\begin{align*}
\nabla\phi+\partial_t \phi \nabla t=0 \ \text{ and } \ \partial_t\phi \partial_st=1.
\end{align*}
Writing $\nabla t$ and $\partial_s t$ in terms of $\nabla \phi$ and $\partial_t\phi$ we see that the right side of \eqref{a22} equals that of~\eqref{a11x}.

\textbf{2. }Fix $\alpha>0$.  We can choose a continuous function $\tilde W$ with compact $(t,x,\theta)$-support and with $(t,x)$-support contained in $\overline{K}$ such that for $C$ as in \eqref{a11} we have
\begin{subequations}\label{a23}
    \begin{align}
        \label{a23a}
        & \|W-\tilde W\|_{H^1(\tilde \Omega_T\times\bR)} < \frac{\alpha}{3C}, \\
        \label{a23b}
        & \left\|\frac{W}{|\nabla_{t,x}\phi|^{1/2}}-\frac{\tilde W}{|\nabla_{t,x} \phi|^{1/2}}\right\|_{L^2(\cS\times \bR_\theta)} < \frac{\alpha}{3}.
    \end{align}
\end{subequations}
To get \eqref{a23b} we used continuity of the trace map $\mathrm{Tr}:H^1(\tilde\Omega_T\times\bR_\theta)\to L^2(\cS\times \bR_\theta)$.
From~\eqref{a11} and \eqref{a23a} we obtain 
\begin{align}\label{a24}
\frac{1}{\sqrt{\eps}} \|(W-\tilde W)(t,x,\phi/\eps)\|_{L^2(\tilde\Omega_T)}<\alpha/3  \ \text{ for } \ \eps\in (0,1].
\end{align}
Now write
\begin{align*}
\begin{split}
&\left|\frac{1}{\sqrt{\eps}} \|W(t,x,\phi/\eps)\|_{L^2(\tilde\Omega_T)}-\left\|\frac{W}{|\nabla_{t,x} \phi|^{1/2}}\right\|_{L^2(\cS\times \bR_\theta)}\right|\\
& \qquad \qquad \leq \left|\frac{1}{\sqrt{\eps}}\|W(t,x,\phi/\eps)\|_{L^2(\tilde\Omega_T)}-\frac{1}{\sqrt{\eps}}\|\tilde W(t,x,\phi/\eps)\|_{L^2(\tilde\Omega_T)}\right|\\
& \qquad \qquad \qquad \qquad + \left|\frac{1}{\sqrt{\eps}}\|\tilde W(t,x,\phi/\eps)\|_{L^2(\tilde\Omega_T)}-\left\|\frac{\tilde W}{|\nabla_{t,x} \phi|^{1/2}}\right\|_{L^2(\cS\times \bR_\theta)}\right|\\
& \qquad \qquad \qquad \qquad \qquad\qquad +\left|\left\|\frac{\tilde W}{|\nabla_{t,x} \phi|^{1/2}}\right\|_{L^2(\cS\times \bR_\theta)}-\left\|\frac{W}{|\nabla_{t,x} \phi|^{1/2}}\right\|_{L^2(\cS\times \bR_\theta)}\right|\\
&\qquad \qquad \qquad \qquad \qquad \qquad \qquad \qquad\qquad \eqqcolon A_\eps+B_\eps+C.
\end{split}
\end{align*}
We have $A_\eps<\alpha/3$ for $\eps\in (0,1]$ by \eqref{a24} and $C<\alpha/3$ by \eqref{a23b}.   Proposition \ref{a11z}(ii) implies $B_\eps<\alpha/3$ for small enough $\eps$. 
\end{proof}

\begin{rem}\label{a25}
\textup{Parallel to Remark \ref{a11kz}, when $Y\in H^1(\bR^n_x\times \bR_\theta)$ with compact support, we obtain the analogue of \eqref{a21} with $\phi_0^{-1}(0)$ in place of $\phi^{-1}(0)$.}

\end{rem}

\section{Nonlinear pulses in free space}\label{npfs}

In this section we construct a low regularity pulse propagating in free space.\footnote{The result of this section is not included in  earlier work on pulses in free space like \cite{ar2}, which deals with high regularity pulses.}  As in \S \ref{size} we denote $\tilde\Omega=[0,T]\times\RR^n$ and points in $T^*(\mathbb R\times \bR^n)$ by $(t,x,\tau,\xi)$.

For $P$ as in \eqref{in1} and $T>0$ consider the Cauchy problem:
\begin{align}\label{a2}
\begin{cases}
Pu^\eps=f(t,x,u^\eps,\partial_t u^\epsilon, \nabla u^\eps), \ & (t,x)\in [0,T]\times\RR^n=\tilde\Omega_T, \\
u^\eps(0,x)=g^\eps(x),  \ & x\in \RR^n, \\ 
\partial_t u^\eps(0,x)=h^\eps(x), \ & x\in \RR^n,
\end{cases}
\end{align}
where $\epsilon>0$ and as in \eqref{in1} we assume 
\begin{itemize}
    \item $f$ satisfies Assumption \ref{amr1z};
    \item pulse initial data:
\begin{align}\label{a1a}
\begin{split}
&g^\eps=\sqrt{\eps}V_0(x,\phi_\mi(0,x)/\eps)+o_\eps(1) \text{ in }H^1(\bR^n)\\
&h^\eps=\frac{1}{\sqrt{\eps}}V_1(x,\phi_\mi(0,x)/\eps)+o_\eps(1) \text{ in }L^2(\bR^n),
\end{split}
\end{align}
where $\phi_\mi$ is a characteristics phase as in Assumption \ref{amr3}. 
The profiles $V_0$, $V_1$ in \eqref{a1a} have compact support in $(x,\theta)$ and satisfy the polarization condition
\begin{subequations}\label{a1c}
\begin{align}
\label{a1ca}
&V_0\in H^2(\bR^n_x\times\bR_\theta),\\
\label{a1cb}
&V_1=(\partial_t\phi_\mi(0,x))\partial_\theta V_0\in H^1(\bR^n_x\times\bR_\theta).  
\end{align}
\end{subequations}
\end{itemize}

The main result is this section gives approximate solutions to \eqref{a2} for pulses in the free space.
\begin{theo}\label{fnp}
Consider the problem \eqref{a2}, \eqref{a1a} under the structural Assumptions \ref{amr1}, \ref{amr1z}, Assumption \ref{amr3} on the incoming phase $\phi_\mi$, and take $V_0$, $V_1$ be as in \eqref{a1c}.  Suppose also that $V_0$ (and hence $V_1)$ has compact $x$-support in $U\cap \{t=0\}$, where $U$ is the open ball in Assumption~\ref{amr3}.
Then there is a $T>0$ and profile $W\in H^1(\tilde\Omega_T\times \bR)$ 
with compact support in $\tilde\Omega_T\times \bR_\theta$ such that 
for $U(t,x,\theta)\coloneqq \int^\theta_{-\infty}W(t,x,s)\,\mathrm ds$ the exact solution $u^\eps$ to \eqref{a2} satisfies
\begin{align}\label{b1zc}
u^\eps\thickapprox_{H^1} \sqrt{\eps}U\left(t,x,\frac{\phi_\mi(t,x)}{\eps}\right) \ \text{ on }\ \tilde\Omega_T.  
\end{align}
Here $\thickapprox_{H^1}$ is defined  in Definition \ref{b}, and the profile $W$ is determined by solving the profile equations \eqref{pe}.
\end{theo}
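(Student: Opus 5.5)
We would follow the standard nonlinear geometric optics scheme: derive the profile equation, solve it in $H^1$, build a smoothed moment-zero approximate solution, and close an energy estimate for the error. Write $\phi=\phi_\mi$ and plug the ansatz $\sqrt{\eps}U(t,x,\phi/\eps)$ into $P$. The $\eps^{-3/2}$ term vanishes by the eikonal equation. The $\eps^{-1/2}$ term gives the profile equation for $W=\partial_\theta U$, namely
\[
T_\phi W+(P_1\phi)W=\partial_\theta^{-1}\text{-free nonlinear term},
\]
that is, $T_\phi \partial_\theta U+(P_1\phi)\partial_\theta U=f$ evaluated at the leading amplitudes. Since $u$ itself is $O(\sqrt\eps)$ while $(\partial_t u,\nabla u)\approx \eps^{-1/2}(\partial_t\phi,\nabla\phi)W$, the scaling means the nonlinearity is to be read as $\sqrt{\eps}\,f(t,x,0,\eps^{-1/2}\partial_t\phi W,\eps^{-1/2}\nabla\phi W)$. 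We would make this precise by differentiating the equation in $\theta$. This gives a transport equation
\[
T_\phi W+(P_1\phi)W=\partial_\theta\big[G(t,x,W)\big],
\]
with $G$ Lipschitz, which is hyperbolic in $(t,x)$ with $\theta$ as a parameter along $T_\phi$. The initial datum is $W(0,x,\theta)=V_1/\partial_t\phi_\mi$ from the polarization condition \eqref{a1cb}. We would solve this in $H^1(\tilde\Omega_T\times\bR)$ for small $T$ by Picard iteration along characteristics, working in coordinates straightening $T_\phi$. Here $P_1\phi\in C^\infty$ since there is no shadow boundary, so differentiating the equation once in $(t,x,\theta)$ gives a linear system for $\nabla W$ with bounded coefficients. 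This uses that $f$, $\partial_t f$ and $\nabla f$ are Lipschitz (Assumption \ref{amr1z}), so that $\partial_\zeta f$ is bounded. Finite speed of propagation along $T_\phi$ gives compact support.

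Next we construct approximate solutions. We approximate $W$ in $H^1$ by $C^\infty$ compactly supported $W^\ell$. We then take moment-zero approximations $W^\ell_{\eps^d}$ from Appendix \ref{mz} and their decaying primitives $U^\ell_{\eps^d}$, and set
\[
u^\eps_{\ell}=\sqrt\eps\, U^\ell_{\eps^d}(t,x,\phi/\eps).
\]
Then we compute
\[
r^\eps_\ell\coloneqq Pu^\eps_\ell-f(t,x,u^\eps_\ell,\partial u^\eps_\ell).
\]
This residual splits into three parts:
\begin{enumerate}[label=(\alph*)]
\item $\eps^{-1/2}\big[\text{profile residual of } W^\ell\big](\phi/\eps)$;
\item the difference between $f$ evaluated at the true gradients and at the leading-order gradients, which is controlled by the Lipschitz bound times lower-order terms;
\item the $\sqrt\eps\,P U^\ell$ term and the moment-zero errors.
\end{enumerate}
The point of Proposition \ref{a11z} is that an $\eps^{-1/2}Z(t,x,\phi/\eps)$ term is bounded in $L^2$ by $\|Z\|_{H^1}$. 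For (a), $Z$ is the profile residual of $W^\ell$, which tends to zero in $H^1$ as $\ell\to\infty$ because $W$ solves the equation and is in $H^1$. This is exactly why $W$ is needed in $H^1$ rather than $L^2$. Terms in (c) carry extra positive powers of $\eps$ times $\eps^{-d}$ losses from the primitive, and are handled by Proposition \ref{crude} and the appendix estimates with $d<1/2$. The initial data error is handled similarly by Remark \ref{a11kz}.

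Finally, the error $w=u^\eps-u^\eps_\ell$ satisfies
\[
Pw=f(u^\eps)-f(u^\eps_\ell)-r^\eps_\ell .
\]
The standard $H^1$ energy estimate for strictly hyperbolic $P$, combined with the Lipschitz bound on $f$ and Gronwall, gives
\[
\|w\|_{H^1}\lesssim \|r^\eps_\ell\|_{L^2}+\text{data error}.
\]
Therefore
\[
\lim_\ell\limsup_\eps \|w\|_{H^1}=0,
\]
which is the meaning of \eqref{b1zc}. The main obstacle is step (a), together with correctly identifying the profile equation, i.e. showing the nonlinear term $f$ evaluated at $O(\eps^{-1/2})$ gradients is exactly captured by a $\theta$-pointwise nonlinearity of $W$. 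We would first check that the Lipschitz $f$ composed with a pulse yields $\eps^{-1/2}G(W)(\phi/\eps)$ up to errors that are small in $L^2$ after Proposition \ref{a11z}, approximating $f$ by its values at $\zeta$ with $u$-component set to zero.
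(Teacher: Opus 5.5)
Your overall architecture matches the paper's: the eikonal equation kills the $\eps^{-3/2}$ term, the profile $W$ solves a transport equation in $H^1$, you smooth and take a moment-zero approximation with $\omega=\eps^d$, Proposition~\ref{a11z} handles the $\eps^{-1/2}$ residual, and you close with an energy estimate and Gronwall. However, two steps are not right as written.

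First, the equation you display after ``differentiating in $\theta$'', $T_\phi W+(P_1\phi)W=\partial_\theta[G(t,x,W)]$, is not the profile equation. The $\eps^{-1/2}$ coefficient is already an equation for $W$, namely \eqref{pe}: $T_\phi W+(P_1\phi)W=\sqrt{\eps}f(t,x,0,\eps^{-1/2}W\nabla_{t,x}\phi)$. Its nonlinearity is pointwise in $\theta$, which is why $\theta$ is a parameter. It also depends on $\eps$, so $W=W^\eps$, with $H^1$ bounds uniform in $\eps$ supplied by Lemma~\ref{rl}(i). With $\partial_\theta[G(W)]=G_W(W)\,\partial_\theta W$ on the right you would get a quasilinear equation in which $\theta$ is a transport direction. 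The residual (a) would then contain $\eps^{-1/2}\bigl(\partial_\theta G(W)-G(W)\bigr)(t,x,\phi/\eps)$, which by \eqref{a21} is generically of size one in $L^2$. Your closing paragraph uses the correct pointwise nonlinearity, so this is an inconsistency, but it must be removed.

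The more substantive gap is your justification of (a). Take an arbitrary smooth $W^\ell\to W$ in $H^1$. The residual $T_\phi(W^\ell-W)+(P_1\phi)(W^\ell-W)-\bigl(G(W^\ell)-G(W)\bigr)$ then tends to zero only in $L^2$, because $T_\phi$ costs a derivative, whereas \eqref{a11} needs smallness in $H^1$. The missing idea is that the equation gives $T_\phi W=\sqrt{\eps}f(\eps^{-1/2}W)-(P_1\phi)W\in H^1$. This extra regularity along $T_\phi$ passes to the approximations only if they commute well with $T_\phi$.

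The paper takes $W^\ell=W^{\rho,\sigma}$: a Friedrichs mollification first in $(x,\theta)$, then in $t$ with a kernel supported in $t\le 0$. Lemma~\ref{fl}(iii) then gives $T_\phi W^{\rho,\sigma}+(P_1\phi)W^{\rho,\sigma}=\sqrt{\eps}f(\eps^{-1/2}W)+o_\rho(1)+o_\sigma(1)$ in $H^1$, with the nonlinearity still evaluated at the exact $W$. The paper never estimates $G(W^\ell)-G(W)$ in $H^1$, which would involve the merely bounded $f_\zeta$ at two different arguments. Instead, $f(\eps^{-1/2}W)-f(\eps^{-1/2}W^{\rho,\sigma}_\omega)$ is handled only after substituting $\theta=\phi/\eps$, in $L^2$, by the Lipschitz bound and \eqref{a11}.

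The same care is needed for the data at $t=0$. Remark~\ref{a11kz} requires $W^\ell(0,\cdot)\to a$ in $H^1(\bR^n_x\times\bR_\theta)$. This does not follow from convergence in $H^1(\tilde\Omega_T\times\bR)$, since the trace on $t=0$ loses half a derivative. In the paper, the $(x,\theta)$-mollification commutes with restriction to $t=0$, so $W^\rho(0)=a^\rho\to a$ in $H^1$. The one-sided $t$-mollification then moves the trace by only $o_\sigma(1)$ in $H^1$, by Remark~\ref{tl3} applied to $W^\rho\in H^1([0,T],H^\infty(\bR^n\times\bR))$.
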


\begin{rem}\label{fnpa}
(i) The $\epsilon$ dependence in the right side of the profile equations \eqref{pe} implies of course that $W$ has $\epsilon$ dependence; but we suppress the epsilon in the notation and write $W$ instead of $W^\eps$.  When we say, for example in Theorem \ref{fnp}, that $
W\in H^1(\Omega_T\times \bR)$, we mean that $W^\eps\in H^1(\Omega_T\times \bR)$ with an $H^1$ norm that is uniformly bounded for $\eps>0$ small.

(ii) Because of the Lipschitz assumption on $f$, the only restriction on $T$ in Theorem \ref{fnp} comes from the size of the ball $U$ in Assumption \ref{amr3} on which $\phi_\mi$ satisfies the eikonal equation.
\end{rem}

\begin{defn}\label{b}
The statement \eqref{b1zc} means that for any sequence $\delta_\ell\searrow 0$ there exist $C^\infty$ profiles $W^\ell(t,x,\theta)$ with compact supports in $\tilde\Omega_T\times \bR_\theta$ and $\eps_\ell>0$ such that 
\begin{align}\label{b3}
\|W-W^\ell\|_{H^1(\tilde\Omega_T\times\bR_\theta)} <\delta_\ell, \ \text{ where } \ W=\partial_\theta U
\end{align}
and
\begin{align}\label{b4}
\left\|u^\eps-\sqrt{\eps}U^\ell_{\eps^d}\left(t,x,\frac{\phi_\mi}{\eps}\right)\right\|_{H^1(\Omega_T)}\lesssim \delta_\ell \ \text{ for } \ \eps\in (0,\eps_\ell].
\end{align}
Here we let $W^\ell_{\eps^d}(t,x,\theta)$ be a moment zero approximation\footnote{Moment zero approximations are defined in Appendix~\ref{mz}; in the notation of that appendix we take $\omega=\eps^d$ here.} of $W^\ell(t,x,\theta)$ for $0<d<1/2$ and 
$U^\ell_{\eps^d}$ be the unique moment zero primitive in $\theta$ of $W^\ell_{\eps^d}$.
\end{defn}

\begin{rem}
(i) Observe that even if $W$ is $C^\infty$ with compact support in $\tilde\Omega_T\times \bR_\theta$ the function
$$U(t,x,\theta)=\int_{-\infty}^\theta W(t,x,s)\,\mathrm ds,$$ 
although smooth, lies in $L^2$ if and only if the moment of $W$, 
$\int^\infty_{-\infty}W(t,x,s) \,\mathrm ds$, is equal to~$0$.\footnote{In that case $U(t,x,\theta)$ is rapidly decaying in $\theta$.} 
Thus, there is no reason to expect the right side of \eqref{b1zc} to lie in $H^1(\Omega_T)$.  Definition~\ref{b} gives~\eqref{b1zc} an \emph{asymptotic} meaning and is similar to definitions used in \cite{jmr1995tams, jmr1996cpam, jmr2000mams, wangwil}.
For any fixed $\ell$ and $\eps>0$ the profile $U^\ell_{\eps^d}$ in \eqref{b4} is smooth in $(t,x,\theta)$ and rapidly decaying in $\theta$.

We note that the $L^2$ convergence in Condition \eqref{nl10} of Definition \ref{meaninga} has been replaced by $H^1$ convergence in \eqref{b3}. The truncation argument needed in the diffraction problem prevents  $H^1$ convergence.

(ii)  One can check that for $\ell$ large and $\eps$ small, $U^\ell_{\eps^d}$ is close to $U$ in 
the space $C_{\mathrm b}(\bR_\theta,H^1(\tilde\Omega_T))$ with norm $\sup_\theta \|a(t,x,\theta)\|_{H^1(\tilde\Omega_T)}$.
\end{rem}

\subsection{Profile equations}
In the rest of \S \ref{npfs} we write $\phi\coloneqq \phi_\mi$.  
We  look for an approximate solution of the form
\begin{align*}
u^\eps_\ma(t,x)=\sqrt{\eps}U(t,x,\phi/\eps),
\end{align*}
where $U(t,x,\theta)$ is a pulse profile to be determined.   We determine $U(t,x,\theta)$ by plugging
$u^\eps_\ma(t,x)$ into the problem \eqref{a2} and then seeing what conditions $U(t,x,\theta)$ must satisfy in order for $u^\eps_\ma$ to be a useful approximate solution.

When we compute $P(t,x,\partial_t, \nabla)u^\eps_\ma$ we obtain an expression that can be written in the form
\begin{align}\label{a9}
Pu^\eps_\ma(t,x)=\left[\eps^{-3/2}E_1(t,x,\theta)+\eps^{-1/2}E_2(t,x,\theta)+\eps^{1/2}E_3(t,x,\theta)\right]_{|\theta=\frac{\phi(x,t)}{\eps}}.
\end{align}
Here $E_1$, $E_2$, $E_3$ are functions that we now compute explicitly in terms of $\phi$ and $U(t,x,\theta)$.\footnote{We just assume for now that $U$ has enough regularity so that the computations below make sense.}   

The second-order operator $P$ has the form
\begin{align*}
P=p(t,x,\partial_t,\nabla)+B_1(t,x,\partial_t,\nabla)+B_0(t,x)
\end{align*}
where $B_0$, $B_1$ are of orders $0$, $1$ respectively, and we set
\begin{align}\label{b6zz}
    P_1 =p(t,x,\partial_t,\nabla)+B_1(t,x,\partial_t,\nabla).
\end{align}
With $T_\phi(t,x)\coloneqq\partial_{\tau,\xi}p(t,x,\partial_t\phi, \nabla\phi)\cdot\partial_{t,x}$ and $W=\partial_\theta U$, we  obtain
\begin{equation}\label{b6b}
    \begin{split}
        Pu_a^{\epsilon}(t,x) = &
          \epsilon^{-3/2} p(t,x,\partial_t\phi, \nabla \phi)(\partial_{\theta}^2 U) \left(t,x,\theta\right)_{|\theta=\frac{\phi}{\eps}} \\
        & \qquad +\epsilon^{-1/2} \left[(T_{\phi} W)(t,x,\theta ) 
         + (P_1\phi) W\left( t,x,\theta \right) \right]_{|\theta=\frac{\phi}{\eps}} \\
        &\qquad \qquad +\eps^{1/2} PU(t,x,\theta)_{|\theta=\frac{\phi}{\eps}}.
    \end{split}
\end{equation}

On the other hand we have 
\begin{align*}%\label{a10}
f(t,x,u^\eps_\ma,\partial_t u^\epsilon_a, \nabla u^\eps_\ma)=f(t,x,0,\epsilon^{-\frac12} W(t,x,\phi/\eps)\partial_t \phi,\epsilon^{-\frac12} W(t,x,\phi/\eps)\nabla \phi)+r_\eps(t,x),
\end{align*}
where $r_\eps(t,x)$ is an error term we show later is $o_\eps(1)$ in $L^2(\tilde\Omega_T)$; see \eqref{ead}.

In the error analysis  we will need the following quantities to be small for $\eps$ small:
\begin{subequations}\label{a10q}
\begin{align}
\label{a10qa}
& \|Pu^\eps_\ma-f(t,x,u^\eps_\ma,\partial_t u^\epsilon_a, \nabla u^\eps_\ma)\|_{L^2(\tilde\Omega_T)},\\
\label{a10qb}
& \|(u^\eps-u^\eps_\ma)_{|t=0}\|_{H^1(\bR^n)}, \text{ and } \|{\partial_t}(u^\eps-u^a_\eps)_{|t=0}\|_{L^2(\bR^n)}.
\end{align}
\end{subequations}
The eikonal equation satisfied by $\phi$ makes the $E_1$ (as in \eqref{a9}) contribution to \eqref{a10qa}  vanish.
If we choose $W$ to satisfy the profile equations:
 \begin{align}\label{pe}
 \begin{split}
T_\phi W+(P_1\phi)W & =\sqrt{\eps}f(t,x,0,\epsilon^{-\frac12}W(t,x,\theta)\partial_t\phi,\epsilon^{-\frac12}W(t,x,\theta)\nabla \phi),\\
W(0,x,\theta) & =\frac{V_1(x,\theta)}{\partial_t\phi(0,x)},
 \end{split}
 \end{align}
then the main  contribution of $\eps^{-1/2}E_2-f$ to \eqref{a10qa} vanishes.  Moreover, a short computation using the initial conditions \eqref{a1a} indicates that the initial condition in \eqref{pe} is the correct choice for $W(0,x,\theta)$ if we want to make the terms in \eqref{a10qb} small.  We confirm that in the error analysis below.

\begin{nota}
We remark some notations here.
\begin{enumerate}
    \item With  some abuse we write 
    $$f(\epsilon^{-\frac12}W)\coloneqq f(t,x,0,\epsilon^{-\frac12}W(t,x,\theta)\partial_t\phi,\epsilon^{-\frac12}W(t,x,\theta)\nabla\phi).$$

    \item[(2a)] Suppose $f^{\rho,\eps}(z)$ is a function taking values in a Banach space $X$, where  $\rho$ and $\eps$ are small positive parameters.\footnote{The notation in  (2a) is used in \S \ref{fnp}, while that in (2b) will be used in \S \ref{diff}.} The statement
$$\|f^{\rho,\eps}\|_X\leq o_\rho(1)+o_\eps(1)$$
means that $\|f^{\rho,\eps}\|_X\leq A^{\rho,\eps}+B^{\rho,\eps}$, where $A^{\rho,\eps}\to 0$ as $\rho\to 0$ uniformly with respect to  $\eps$.   Moreover, for $\rho$ \emph{fixed},  $B^{\rho,\eps}\to 0$ as $\eps\to 0$.

    \item[(2b)] Similarly, if $\mu$ is another small positive parameter,  
$$\|g^{\mu,\rho,\eps}\|_X\leq o_\mu(1)+o_\rho(1)+o_\eps(1)$$
means that $\|g^{\mu,\rho,\eps}\|_X\leq A^{\mu,\rho,\eps}+B^{\mu,\rho,\eps}+C^{\mu,\rho,\eps}$, where $A^{\mu,\rho,\eps}\to 0$ as $\mu\to 0$ uniformly with respect to $(\rho,\eps)$.  Moreover,  for $\mu$ \emph{fixed}, $B^{\mu,\rho,\eps}\to 0$ as $\rho\to 0$ uniformly with respect to $\eps$, and for $(\mu,\rho)$ \emph{fixed}, $C^{\mu,\rho,\eps}\to 0$ as $\eps\to 0$. In this paper $\mu$, $\rho$, and $\eps$ denote truncation, regularization, and wavelength parameters respectively.   Truncation is used only in \S \ref{diff}.

    \item[(3)] Given a function $u(t,x,\theta)$ we will sometimes write $u(t)$ for the function of $(x,\theta)$ whose value at $(x,\theta)$ is $u(t,x,\theta)$.
\end{enumerate}
\end{nota}

\subsection{Proof of Theorem \ref{fnp}}\label{fnpz}

We first remark that standard energy estimates for the  hyperbolic initial value problem
\begin{align*}%\label{a3z}
Pu=f,  \ \  u_{|t=0}=g, \ \ (u_t)_{|t=0}=h \ \text{ on }\ [0,T]\times \bR^n
\end{align*}
have the form:\footnote{See Chazarain--Piriou \cite[p. 371]{CP}, for example, for a proof of the energy estimate.}
\begin{align}\label{a4}
\begin{split}
&\|u(t,\cdot)\|^2_{H^{k+1}(\bR^n)}+\|\partial_t u(t,\cdot)\|^2_{H^k(\bR^n)} \\
&\qquad \qquad \lesssim \|g\|_{H^{k+1}(\bR^n)}^2+ \|h\|_{H^k(\bR^n)}^2+\int^t_0\|f(s,\cdot)\|_{H^k(\bR^n)}^2 \,\mathrm ds,
\end{split}
\end{align}
where $k\in \bN_0$, $t\in [0,T]$.

\begin{proof}[Proof of Theorem \ref{fnp}]
\textbf{1. Existence of $u^\eps$.} Recall that $V_0\in H^2(\bR^n\times \bR)$.  Using Proposition~\ref{crude} (or rather, its analogue for functions of $(x,\theta)$) and the estimate \eqref{a11k}, we see that $\|g^\eps\|_{H^1(\bR^n)}$ and $\|h^\eps\|_{L^2(\bR^n)}$ are bounded uniformly for small $\eps$.  The Lipschitz property of $f$, \eqref{lips}, thus allows us to use the energy estimate 
\eqref{a4} and a standard Picard iteration and continuation argument to obtain a unique solution $u^\eps\in H^1(\tilde\Omega_T)$ of \eqref{a2}, \eqref{a1a} for appropriate $T>0$ independent of small $\eps>0$; see Remark \ref{fnpa}(ii).

\textbf{2. Linear profile estimates.} Consider the linear problem 
\begin{align}\label{nl1v}
\begin{cases}
T_\phi W+(P_1\phi)W =F & (t,x,\theta)\in\tilde\Omega_T\times \bR,\\
W(0,x,\theta) =a(x,\theta) & (x,\theta)\in \RR^n\times\RR,
\end{cases}
\end{align}
where $F$ and $a$ are $C^\infty$ and compactly supported.   This problem can be solved by the method of characteristics 
and from that explicit solution one can readily ``read off" the standard estimates\footnote{When deriving the explicit solution formula, it  is convenient first to use $\partial_t\phi\neq 0$ to reduce to the case where the coefficient of $\partial_t$ in the interior equation of \eqref{nl1} is one.}
\begin{subequations}\label{nl1z}
\begin{align}
\label{nl1za}
& \|W(t)\|_{H^k(\bR^n_x\times\bR_\theta)}\leq C\left[ \|a\|_{H^k(\bR^n\times \bR)}+\|F\|_{L^2([0,t],H^k(\bR^n_x\times\bR_\theta))}\right]\text{ for }t\in [0,T], \;\;k=0,1,\\
\label{nl1zb}
& \|\partial_tW(t)\|_{L^2(\bR^n\times \bR)}\leq C\left[ \|a\|_{H^1(\bR^n\times \bR)}+\|F\|_{{L^2([0,t],H^k(\bR^n_x\times\bR_\theta))}}\right]+\sup_{s\in [0,t]} \|F(s)\|_{{L^2(\bR^n\times \bR)}}.
\end{align} 
\end{subequations}
For $F\in H^k(\tilde\Omega_T\times \bR)$ and $a\in H^k(\bR^n\times \bR)$ with $k=0$ or $1$, we approximate $F$ and $a$ by smooth compactly supported functions and take limits to obtain a solution $W$ of \eqref{nl1v} that  continues to satisfy \eqref{nl1z}.\footnote{In proving estimates for the diffraction problem one must contend with the blow up of  $P_1\phi_\mr$ near the shadow boundary.} The size of $T$ and the supports of $F$ and $a$ are limited by the size of the ball $U$ in Assumption~\ref{amr3} in which $\phi$ satisfies the eikonal equation. 

\textbf{3. Nonlinear profiles.}  For $a\in H^1(\RR^n\times \RR)$, consider the nonlinear profile problem
\begin{align}\label{nl1}
\begin{cases}
T_\phi W+(P_1\phi)W =\sqrt{\eps}f\left(\epsilon^{-\frac12}W\right) & (t,x,\theta)\in \tilde\Omega_T\times\RR,\\
W(0,x,\theta) =\frac{V_1(x,\theta)}{\partial_t\phi(0,x)} = a(x,\theta)\ & (x,\theta)\in \RR^n\times\RR,
\end{cases}
\end{align}
which we solve using step \textbf{2} and  the  iteration scheme
\begin{align*}%\label{nl1u}
\begin{cases}
T_\phi W^{n+1}+(P_1\phi)W^{n+1} =\sqrt{\eps}f\left(\epsilon^{-\frac12}W^n\right), & (t,x,\theta)\in \tilde\Omega_T\times\RR,\\
W^{n+1}(0,x,\theta) =a(x,\theta), & (x,\theta)\in \RR^n\times \RR.
\end{cases}
\end{align*}
The estimates \eqref{nl1za} and the Lipschitz property of $f$ imply that   the iterates $W^n$ are bounded in $C([0,T],H^1(\bR^n\times \bR))$.  The equation and \eqref{nl1zb} show that the $W^n$ are bounded in $H^1(\tilde\Omega_T\times\bR)$, and hence a subsequence converges weakly to some $W\in H^1(\tilde \Omega_T\times \bR)$.   On the other hand the estimate \eqref{nl1za} with $k=0$ and $f$ Lipschitz  imply that the $W^n$ converge in $L^2(\tilde\Omega_T\times \bR)$  to a limit that must equal $W$.  By interpolation (Proposition \ref{interpolate})  we conclude 
\begin{align*}%\label{npa}
W_n\to W \ \text{ in } \ H^s(\tilde\Omega_T\times \bR) \ \text{ for } \ s<1,  \ \text{ where } \ W\in H^1(\tilde\Omega_T\times \bR).
\end{align*}
We now have a solution  $W\in H^1(\tilde\Omega_T\times \bR)$  to \eqref{nl1} with compact support in $(t,x,\theta)$. 
\begin{rem}
The function $W$ is not yet suitable for use in constructing an approximate solution.    If we take 
$U(t,x,\theta)=\int^\theta_{-\infty}W(t,x,s) \,\mathrm ds$, then $U$ generally fails to lie in $L^2$ and is not regular enough for the computation \eqref{b6b} to make sense.  To remedy this we  regularize $W$ and take a moment-zero approximation before taking the primitive in $\theta$ to obain $U^{\rho,\sigma}_\omega$ as in step \textbf{7}.
\end{rem}

\textbf{4. Regularize $W$ in $(x,\theta)$.} Set $\rho=(\rho_x,\rho_\theta)$ and let $\delta_\rho(x,\theta)$ be a smooth approximate identity.  Define 
$W^\rho=R^\rho W \coloneqq \delta_\rho *W$.       
Observe that regularizing with $R^\rho$ preserves the initial condition at $t=0$.

Since $W\in H^1(\tilde\Omega_T\times \bR)\subset H^1([0,T],L^2(\bR^n_x\times \bR_\theta))$ we have 
\begin{subequations}%\label{regd}
\begin{align}
\label{regda}
& W^{\rho}\in H^1([0,T],H^\infty(\bR^n_x\times \bR_\theta))\text{ for each }\rho,\\
\label{regdb}
& \|W-W^\rho\|_{H^1(\tilde \Omega_T\times\bR_\theta)}=o_\rho(1).
\end{align}
\end{subequations}
Applying $R^{\rho}$ to the problem \eqref{nl1} gives 
\begin{align*}%\label{rege}
\begin{cases}
T_\phi W^{\rho}+(P_1\phi)W^{\rho} =\sqrt{\eps}f\left(\epsilon^{-\frac12}W\right)^{\rho}+[T_\phi,R^{\rho}]W+[P_1\phi,R^{\rho}]W, & (t,x,\theta)\in \tilde\Omega_T\times \RR, \\
W^{\rho}(0,x,\theta) =a^{\rho}(x,\theta)\in H^\infty(\bR^n\times\bR), & (x,\theta)\in \RR^n\times \RR.
\end{cases}
\end{align*}
With Friedrichs's Lemma (Lemma \ref{fl}) and Lemma \ref{rl}, we see that 
\begin{subequations}\label{regg}
\begin{align}
\label{regga}
T_\phi W^{\rho}+(P_1\phi)W^{\rho} & =\sqrt{\eps}f\left(\epsilon^{-\frac12} W\right)+o_\rho(1) && \text{ in }H^1(\tilde\Omega_T\times \bR), \\
\label{reggb}
W^{\rho}(0,x,\theta) & =a(x,\theta)+o_\rho(1) && \text{ in } H^1(\bR^n\times\bR).
\end{align}
\end{subequations}

\textbf{5. Regularize in $t$.}
Now regularize in $t$ using an approximate identity $\delta_\sigma(t)\in C^\infty_c(\bR)$ \emph{supported in} $t\leq 0$.\footnote{This implies that for $t\geq 0$, $(R^\sigma f)(t) \coloneqq (f*\delta_\sigma)(t)$ depends only on $f_{|t\geq 0}$.}  This disturbs the initial condition, but in a way that we can control.   With $W^{\rho,\sigma}\coloneqq R^\sigma W^\rho$ we obtain 
\begin{subequations}\label{regh}
\begin{align}
\label{regha}
T_{\phi}W^{\rho,\sigma}+(P_1\phi)W^{\rho,\sigma} & =  \sqrt{\eps}f\left(\epsilon^{-\frac12}W\right)+o_{\rho}(1)+o_\sigma(1) && \text{ in }H^1(\tilde\Omega_T\times \bR)\\
\label{reghb}
(W^{\rho,\sigma})_{|t=0} & =a+o_{\rho}(1)+o_\sigma(1) && \text{ in }H^1({\bR^n}\times\bR)\\
\label{reghc}
\|W^\rho-W^{\rho,\sigma}\|_{H^1(\tilde\Omega_T\times \bR)} & =o_\sigma(1). &&
\end{align}
\end{subequations}
The function $W^{\rho,\sigma}$ is $C^\infty$ with compact support in $(t,x,\theta)$.  Here \eqref{regha} follows by applying  $R^\sigma$ to \eqref{regga} and using Friedrichs's lemma.
The equality \eqref{reghb} follows by writing
$$W^{\rho,\sigma}-a=(W^{\rho,\sigma}-W^\rho)+(W^\rho-a),$$
using \eqref{reggb}, and applying Remark \ref{tl3}.

\textbf{6. Moment zero approximation of $W^{\rho,\sigma}$.} Using $W^{\rho,\sigma}_\omega$ to denote the moment zero approximation of $W^{\rho,\sigma}$ as in Definition \ref{mz2},  we apply the Fourier multiplier $\chi_\omega(D_\theta)$ to the problem \eqref{regh} and apply Proposition \ref{a10z} to obtain
\begin{align}\label{nl2a}
T_\phi W^{\rho,\sigma}_\omega+(P_1\phi)W^{\rho,\sigma}_\omega & =\sqrt{\eps}f\left(\epsilon^{-\frac12}W\right)+o_{\rho}(1)+o_\sigma(1)+o_\omega(1) && \text{ in }H^1(\tilde\Omega\times\bR), \\
(W^{\rho,\sigma}_\omega)_{|t=0} & =a+o_\rho(1)+o_\sigma(1)+o_\omega(1) && \text{ in } H^1(\bR^n\times\bR).
\end{align}
The function  $W^{\rho,\sigma}_p$ no longer has compact $\theta$ support, but is $C^\infty$ and decays rapidly in $\theta$.

\textbf{7. Approximate solution $u^\eps_\ma$.}  For $0<d<1/2$ define  $$u^\eps_\ma(t,x)\coloneqq \sqrt{\eps}U^{\rho,\sigma}_\omega(t,x,\phi/\eps),$$ 
where $\omega=\eps^d$ and $U^{\rho,\sigma}_\omega$ is the moment zero primitive in $\theta$ of 
$W^{\rho,\sigma}_\omega$.   In the notation of Appendix~\ref{mz},  $U^{\rho,\sigma}_\omega=\left(W^{\rho,\sigma}_\omega\right)^*$.    By a slight variant of Proposition \ref{a10z}(iii) we have
\begin{align*}
U^{\rho,\sigma}_\omega\in H^\infty(\tilde\Omega_T\times\bR) \ \text{ for each \emph{fixed} } \ \omega.
\end{align*}

We claim there exists $M>0$ such $\|u^\eps_\ma\|_{H^1(\tilde\Omega_T)}\leq M$ for $\eps$ small.   Indeed, the largest contributions to this norm have the form 
\begin{align*}
\begin{split}
\left\|\epsilon^{-\frac12}W^{\rho,\sigma}_{\eps^d}(t,x,\phi/\eps)(\partial_t\phi, \nabla\phi)\right\|_{L^2(\tilde\Omega_T)} & \lesssim \|W^{\rho,\sigma}_{\eps^d}\|_{H^1(\tilde\Omega_T\times \bR_\theta)}\\
& \lesssim \|W^{\rho,\sigma}\|_{H^1(\tilde\Omega_T\times \bR_\theta)}\lesssim \|W\|_{H^1(\tilde\Omega_T\times \bR_\theta)}.
\end{split}
\end{align*}
Here we used the estimate \eqref{a11} and Proposition \ref{a10z}(i) for the first two inequalities.

\textbf{8. Error analysis.}
Let us write $f(t,x,u^\eps,\partial_t u^\epsilon, \nabla u^\eps)$ as $f(u^\eps)$ for now.   
We have
\begin{align*}
Pu^\eps-Pu^\eps_\ma=[f(u^\eps)-f(u^\eps_\ma)]+[f(u^\eps_\ma)-Pu^\eps_\ma] \eqqcolon D+E.
\end{align*}
When estimating $\|u^\eps-u^\eps_\ma\|_{H^1(\tilde\Omega_T)}$ using \eqref{a4}, we see that the Lipschitz property of $f$ allows us to absorb the contribution of $\|D\|_{L^2(\tilde\Omega_T)}$ by an application of Gronwall's inequality.\footnote{Gronwall's inequality (integral version) says that if $y$ and $\phi$ are continuous, nonnegative functions on $[0,T]$ which satisfy for some constants $\alpha,C\geq 0$,
$y(t)\leq C\left[\alpha+\int^t_{0}\left(y(s)+\phi(s)\right) \,\mathrm ds\right] \text{ for }t\in [0,T],$
then
$$y(t)\leq C\left[\alpha e^{Ct}+\int^t_{0}e^{C(t-s)}\phi(s) \,\mathrm ds\right].$$}

Thus, it remains to estimate
\begin{align*}
\|Pu^\eps_\ma-f(u^\eps_\ma)\|_{L^2(\tilde\Omega_T)},\ \   \|(u^\eps-u^\eps_\ma)_{|t=0}\|_{H^1(\bR^n)}, \  \text{ and } \  \|{\partial_t}(u^\eps-u^\eps_\ma)_{|t=0}\|_{L^2(\tilde\Omega_T)}.
\end{align*}

\textbf{9. Estimate $\|Pu^\eps_\ma-f(u^\eps_\ma)\|_{L^2(\tilde\Omega_T)}$.}
The profile $U^{\rho,\sigma}_\omega$ is regular enough so that the computation \eqref{b6b} is valid.   Thus we have
\begin{align}\label{eaa}
    \begin{split}
  Pu^\eps_\ma=\frac{1}{\sqrt{\eps}}\left[T_\phi W^{\rho,\sigma}_\omega+(P_1\phi)W^{\rho,\sigma}_\omega\right]_{|\theta=\phi/\eps}+\sqrt{\eps}(PU^{\rho,\sigma}_\omega)_{|\theta=\phi/\eps}.     
    \end{split}
\end{align}
Since $PU^{\rho,\sigma}_\omega=\left(PW^{\rho,\sigma}_\omega\right)^*$ and $PW^{\rho,\sigma}_\omega\in H^1(\tilde\Omega_T\times\bR_\theta)$, using Proposition \ref{a10z} we obtain
\begin{align}\label{eab}
\begin{split}
\sqrt{\eps}\|(PU^{\rho,\sigma}_\omega)_{|\theta=\phi/\eps}\|_{L^2(\tilde\Omega_T)}
& =\sqrt{\eps} \|\left(PW^{\rho,\sigma}_\omega\right)^*_{|\theta=\phi/\eps}\|_{L^2(\tilde\Omega_T)}
\leq\sqrt{\eps} \|\left(PW^{\rho,\sigma}_\omega\right)^*\|_{H^1(\tilde\Omega_T\times\bR_\theta)}\\
& \leq \sqrt{\eps}\frac{\|PW^{\rho,\sigma}_\omega\|_{H^1(\tilde\Omega_T\times\bR_\theta)}}{\omega} \leq \sqrt{\eps}\frac{\|PW^{\rho,\sigma}\|_{H^1(\tilde\Omega_T\times\bR_\theta)}}{\eps^d}=O(\eps^{\frac{1}{2}-d})
\end{split}
\end{align}
for fixed $\rho$.
Here we used Proposition \ref{crude} for the first inequality and Proposition \ref{a10z} for the other two inequalities.

Next consider 
\begin{align}\label{eac}
\begin{split}
f(u^\eps_\ma) & =f(t,x,\sqrt{\eps}U^{\rho,\sigma}_\omega,\sqrt{\eps}\nabla_{t,x} U^{\rho,\sigma}_\omega+\epsilon^{-\frac12}W^{\rho,\sigma}_\omega\nabla_{t,x} \phi)_{|\theta=\phi/\eps}\\
& =f(t,x,0,\epsilon^{-\frac12}W^{\rho,\sigma}_\omega\nabla_{t,x} \phi)_{|\theta=\phi/\eps}+r_\eps.
\end{split}
\end{align}
Since $f$ is Lipschitz, we have 
\begin{align}\label{ead}
\|r_\eps\|_{L^2(\tilde\Omega_T)}\lesssim \|\sqrt{\eps}U^{\rho,\sigma}_\omega\|_{L^2(\tilde\Omega_T)}+\|\sqrt{\eps}\nabla_{t,x} U^{\rho,\sigma}_\omega\|_{L^2(\tilde\Omega_T)}\to 0 \text{ as }\eps\to 0
\end{align}
by an estimate just like \eqref{eab}.

From \eqref{eaa}--\eqref{ead} we see that
\begin{align*}
    \begin{split}
        & \|Pu^\eps_\ma-f(u^\eps_\ma)\|_{L^2(\tilde\Omega_T)} \\
        & \quad =\left\|\epsilon^{-\frac12}\left(T_\phi W^{\rho,\sigma}_\omega+(P_1\phi)W^{\rho,\sigma}_\omega-\sqrt{\eps}f\left(\epsilon^{-\frac12}W^{\rho,\sigma}_\omega\right)\right)_{|\theta=\phi/\eps}\right\|_{L^2(\tilde\Omega_T)}+o_\eps(1)\\
        & \quad \lesssim \left\|\epsilon^{-\frac12}\left(T_\phi W^{\rho,\sigma}_\omega+(P_1\phi)W^{\rho,\sigma}_\omega-\sqrt{\eps}f\left(\epsilon^{-\frac12}W\right)\right)\right\|_{L^2(\tilde\Omega_T)}\\
        & \qquad \qquad \qquad \qquad \qquad \qquad \qquad \qquad \quad \ \ +\left\|f\left(\epsilon^{-\frac12}W\right)-f\left(\epsilon^{-\frac12}W^{\rho,\sigma}_\omega\right)\right\|_{L^2(\tilde\Omega_T)}+o_\eps(1)\\
        &\quad \lesssim\left\|T_\phi W^{\rho,\sigma}_\omega+(P_1\phi)W^{\rho,\sigma}_\omega-\sqrt{\eps}f\left(\epsilon^{-\frac12}W\right)\right\|_{H^1(\tilde\Omega_T\times\bR)}  +\epsilon^{-\frac12}\left\|W-W^{\rho,\sigma}_\omega\right\|_{L^2(\tilde\Omega_T)}+o_\eps(1)\\
        &\quad \lesssim(o_\rho(1)+o_\sigma(1)+o_\omega(1))+\left\|W-W^{\rho,\sigma}_\omega\right\|_{H^1(\tilde\Omega_T\times\bR)}+o_\eps(1)\\
        & \quad =o_\rho(1)+o_\sigma(1) +o_\eps(1).
    \end{split}
\end{align*}
Here we used \eqref{a11} and the Lipschitz property of $f$ to obtain the fourth line.  We used \eqref{nl2a} and \eqref{a11}, along with Friedrichs's lemma and Proposition \ref{a10z}  to get the last line.

\textbf{10. Estimate $\|(u^\eps-u^\eps_\ma)_{|t=0}\|_{H^1(\bR^n)}.$}
Recall 
\begin{align*}
\begin{split}
V_0(x,\theta) & =U(0,x,\theta)\in H^2(\bR^n\times\bR),\\
W(0,x,\theta) & =a(x,\theta) \coloneqq \frac{V_1(x,\theta)}{\partial_t\phi(0,x)}=\partial_\theta V_0(x,\theta)\in H^1(\bR^n\times \bR).
\end{split}
\end{align*}
With $\omega=\eps^d$ consider 
\begin{align*}%\label{eah}
\begin{split}
(u^\eps-u^\eps_\ma)_{|t=0}& =g^\eps-\sqrt{\eps}U^{\rho,\sigma}_\omega(0,x,\phi_0/\eps)\\
& = \left(g^\eps-\sqrt{\eps}V_0(x,\phi_0/\eps)\right)+\left(\sqrt{\eps}V_0(x,\phi_0/\eps)-\sqrt{\eps}U^{\rho,\sigma}_\omega(0,x,\phi_0/\eps)\right)\\
& \eqqcolon A+B.
\end{split}
\end{align*}
We have $\|A\|_{H^1(\bR^n)}=o_\eps(1)$ by definition of $g^\eps$ in \eqref{a1a}.    The contribution to $\|B\|_{H^1({\bR^n})}$ whose estimate is least obvious is, with $\omega=\eps^d$:
\begin{align}\label{eahh}
\begin{split}
&\epsilon^{-\frac12} \|W(0,x,\phi_0/\eps)\nabla \phi_0-W^{\rho,\sigma}_{\omega}\nabla \phi_0\|_{L^2({\bR^n})}\\
& \qquad \qquad \leq \epsilon^{-\frac12}\|W\nabla \phi_0-W^{\rho}\nabla \phi_0\|_{L^2({\bR^n})}+\epsilon^{-\frac12}\|W^{\rho}\nabla \phi_0-W^{\rho,\sigma}\nabla \phi_0\|_{L^2({\bR^n})}\\
&\qquad \qquad \qquad \qquad +\epsilon^{-\frac12}\|W^{\rho,\sigma}\nabla \phi_0-W^{\rho,\sigma}_{\omega}\nabla \phi_0\|_{L^2({\bR^n})}\eqqcolon C+D+E.
\end{split}
\end{align}
Using Remark \ref{a11kz}, the fact that $W_{|t=0}=\partial_\theta V_0=a\in H^1({\bR^n}\times \bR_\theta)$, and Friedrich's lemma, we obtain
$$C\lesssim \|W-W^{\rho}\|_{H^1({\bR^n}\times \bR_\theta)}=o_\rho(1).$$
The tricky term in \eqref{eahh} is $D$, since $W^{\rho,\sigma}(0,x,\theta)$ is obtained by first regularizing $W^{\rho}$ in $t$ and then restricting to $t=0$.   Remark \ref{tl3} implies that:
$$\|W^{\rho}-W^{\rho,\sigma}\|_{H^1({\bR^n}\times \bR_\theta)}=o_{\sigma}(1) \ \text{ at } \ t=0.$$
Here we used that $W^{\rho}(t,x,\theta)\in H^1([0,T],H^\infty({\bR^n}\times \bR_\theta))$.  Thus, $D=o_{\sigma}(1)$ by Remark~\ref{a11kz}.   By Remark~\ref{a11kz} again and Proposition \ref{a10z}(ii) we have $E=o_\eps(1)$.

To complete the estimate of $\|(u^\eps-u^\eps_\ma)_{|t=0}\|_{H^1(\overline{\bR^n_+})}$ we must estimate the contribution to $\|B\|_{H^1({\bR^n})}$ given by 
\begin{align*}%\label{eahg}
\|\partial_{x}^\alpha(\sqrt{\eps} V_0(x,\theta)-\sqrt{\eps}U^{\rho,\sigma}_{\omega}(0,x,\theta))_{|\theta=\phi_0/\eps}\|_{L^2({\bR^n})} \ \text{ for } \ |\alpha|\leq 1.
\end{align*}
For each $\alpha$ the two terms inside $\|\cdot\|$ can be estimated separately using Proposition \ref{crude} and shown to be $o_\eps(1)$ in $L^2({\bR^n})$.  For example, we have $\nabla U^{\rho,\sigma}_{\omega}(0,x,\theta)=(\nabla W^{\rho,\sigma}_\omega)^*(0,x,\theta)$, and 
\begin{align*}
\begin{split}
\sqrt{\eps}\left\|(\nabla W^{\rho,\sigma}_\omega)^*(0,x,\phi_0/\eps)\right\|_{L^2(\bR^n)} & \lesssim \sqrt{\eps}\left\|(\nabla W^{\rho,\sigma}_\omega)^*(0,x,\theta)\right\|_{H^1(\bR^n\times \bR)}\\
&\lesssim \eps^{\frac{1}{2}-d}\left\|\nabla W^{\rho,\sigma}(0,x,\theta)\right\|_{H^1(\bR^n\times\bR)}=o_\eps(1).
\end{split}
\end{align*}
Here we used Proposition \ref{crude} for the first inequality and Proposition \ref{a10z}(iii) for the second.
Summarizing we have shown
\begin{align*}
\|(u^\eps-u^\eps_\ma)_{|t=0}\|_{H^1({\bR^n})}=o_{\rho}(1)+o_\sigma(1)+o_\eps(1).
\end{align*}

\textbf{11. Estimate $\|{\partial_t}(u^\eps-u^\eps_\ma)_{|t=0}\|_{L^2(\bR^n)}$.} At $t=0$ with 
\begin{align}\label{eahj}
V_1=\partial_\theta V_0\partial_t\phi=W\partial_t\phi=a\partial_t\phi,
\end{align}
 we have 
\begin{align*}%\label{eahk}
\partial_t(u^\eps-u^\eps_\ma)=\left(h^\eps-\epsilon^{-\frac12}V_1(0,x,\phi_0/\eps)\right)+\left(\epsilon^{-\frac12} V_1-\partial_t(\sqrt{\eps}U^{\rho,\sigma}_{\omega})\right) \eqqcolon A+B.  
\end{align*}
We have $\|A\|_{L^2({\bR^n})}=o_\eps(1)$ by assumption.   Using \eqref{eahj} and  the argument of step \textbf{10}, we find
$$\|B\|_{L^2({\bR^n})}=o_{\rho}(1)+o_\sigma(1)+o_\eps(1),$$
therefore
\begin{align*}%\label{eahl}
\|{\partial_t}(u^\eps-u^\eps_\ma)_{|t=0}\|_{L^2({\bR^n})}=o_{\rho}(1)+o_\sigma(1)+o_\eps(1).
\end{align*}

\textbf{12. Conclusion.} The estimates of steps \textbf{8}--\textbf{11}  imply  that for $0<d<1/2$:
\begin{align}\label{eaj}
\|u^\eps-u^\eps_\ma\|_{H^1(\tilde\Omega_T)}=\|u^\eps-\sqrt{\eps}U^\rho_{\eps^d}(t,x,\phi/\eps)\|_{H^1(\tilde\Omega_T)} = o_\rho(1)+o_\sigma(1)+o_\eps(1).  
\end{align}

Let $\delta_\ell\searrow 0$.   In view of Definition \ref{b}, to show that \eqref{b1zc} holds, we must explain how to choose the functions $W^\ell$ that appear in \eqref{b3}.  For each $\ell$ using \eqref{regdb}, \eqref{reghc}, and 
\eqref{eaj}, we can choose first $\rho(\ell)$, then $\sigma(\ell)$, and  then $\eps(\ell)$ such that 
\begin{align*}
\|W-W^{\rho(\ell),\sigma(\ell)}\|_{H^1(\tilde\Omega_T\times \bR)}\leq \delta_\ell,
\end{align*}
and such that the function $U^{\rho(\ell),\sigma(\ell)}_{\eps^d}$ constructed as the moment-zero $\theta$-primitive of  $W^{\rho(\ell),\sigma(\ell)}_{\eps^d}$ satisfies 
\begin{align*}%\label{eak}
\|u^\eps-\sqrt{\eps}U^{\rho(\ell)}_{\eps^d}(t,x,\phi/\eps)\|_{H^1(\Omega_T)}<\delta_\ell \text{ for }0<\eps<\eps(\ell).
\end{align*}
Thus, for each $\ell$ the function $W^{\ell} \coloneqq W^{\rho(\ell),\sigma(\ell)}$ satisfies the condition of Definition \ref{b}.
\end{proof}

\section{Diffraction of nonlinear  pulses.}\label{diff}

In this section we present the main result of this paper, Theorem \ref{mta}.    The theorem describes the diffraction of low regularity, large amplitude pulses propagating along rays that graze boundaries to high (possibly infinite) order.

Consider \eqref{in1} under the the assumptions in Theorem \ref{mta}.
The proof of existence of $u^\eps$ and the later error estimate use a classical estimate due to Sakamoto \cite{sakamoto3} for the following linear initial boundary value problem on $\Omega_T$
\begin{align*}
\begin{cases}
P(t,x,\partial_t,\nabla) u=f & (t,x)\in\Omega_T,\\
u(t,0,x')=b(t,x') &  (t,x')\in \slashed\Omega_T,\\
u(-T,x)=g(x), \ \partial_t u(-T,x)=h(x) & x\in \RR_+\times \RR^{n-1}.
\end{cases}
\end{align*}
Under the natural compatibility conditions on $(f,b,g,h)$, there exists $C>0$ such that for $t\in [-T,T]$ we have:\footnote{Here we have written $u(t)$ for the function of $x$ given by  $u(t,x)$.}
\begin{align}\label{sakamoto}
\begin{split}
&\|u(t)\|^2_{H^1(\overline{\bR^n_+})}+\|\partial_tu(t)\|^2_{L^2(\overline{\bR^n_+})}\\
&\qquad \qquad\leq C\left[\int^t_{-T}\|f(s)\|^2_{L^2(\overline{\bR^n_+})}\,\mathrm ds+\| b\|^2_{H^1(\slashed\Omega_{[-T,t]})}+\|g\|^2_{H^1(\overline{\bR^n_+})}+\|h\|^2_{L^2(\overline{\bR^n_+})}\right].
\end{split}
\end{align}

The  assumption  that $f_\zeta$ is  bounded  allows one to prove the existence of a unique solution $u^\eps\in H^1(\Omega_T)$ of \eqref{in1} by a standard  Picard iteration argument using the estimate \eqref{sakamoto}.  One obtains both existence and convergence of the iterates on $\Omega_T$ for small $\eps>0$.  The definition of $\sim_{H^1}$ plays no role in that proof.  The only restriction on $T$  comes from the size of the domain of existence of the phases $\phi_\mi$ and $\phi_\mr$.

\subsection{Profile equations}
We first remark some simplified notations.
\begin{nota}\label{notza}
\textup{Here are some abuses of notation that we often commit below.
\begin{align*}
\begin{split}
f(u^\eps_\ma)& \coloneqq f(t,x,u^\eps_\ma, \partial_t u_\ma^\epsilon, \nabla u^\eps_\ma),\\
{f}\left(\epsilon^{-\frac12}W_\mr,\epsilon^{-\frac12}W_\mi\right)& \coloneqq {f}\left(t,x,0,\epsilon^{-\frac12}W_\mr\nabla_{t,x}\phi_\mr+\epsilon^{-\frac12}W_\mi\nabla_{t,x}\phi_\mi\right),\\
f_{\mathrm{nc}}\left(\epsilon^{-\frac12}W_\mr,\epsilon^{-\frac12}W_\mi\right)& \coloneqq f\left(\epsilon^{-\frac12}W_\mr,\epsilon^{-\frac12}W_\mi\right)-f\left(\epsilon^{-\frac12}W_\mr,0\right)-f\left(0,\epsilon^{-\frac12}W_\mi\right).
\end{split}
\end{align*}
We think of $f_{\mathrm{nc}}$ as containing ``noncharacteristic oscillations" produced by nonlinear interaction of the incoming and reflected waves.}
\end{nota}

 Consider the profile equations
\begin{align}
    & \begin{cases}\label{nl3}
        T_{\phi_\mr}W_\mr+(P_1\phi_\mr)W_\mr=\sqrt{\eps}f(\epsilon^{-\frac12}W_\mr,0) & (t,x,\theta)\in\mathring{J_\mr}\times \mathbb{R},\\
        W_\mr(t,0,x',\theta)=-W_\mi(t,0,x',\theta) & (t,0,x',\theta)\in (J_\mr\cap\{x_1=0\})\times \mathbb{R}, \\
        W_\mr(t,x,\theta)=0 & (t,x,\theta)\in(\Omega_T\setminus J_\mr)\times \mathbb{R};
    \end{cases}\\
    & \begin{cases}\label{nl4}
        T_{\phi_\mi}W_\mi+(P_1\phi_\mi)W_\mi=\sqrt{\eps}f(0,\epsilon^{-\frac12}W_\mi) & (t,x,\theta)\in\mathring{J_\mi}\times \mathbb{R},\\
        W_\mi(-T,x,\theta)=a(x,\theta) & (x,\theta)\in (J_\mi\cap\{t=-T\})\times\RR,\\
        W_\mi(t,x,\theta)=0 & (t,x,\theta)\in(\Omega_T\setminus J_\mi)\times \mathbb{R}.
    \end{cases}
\end{align}
The sets $J_\mi$, $J_\mr$ were described in \eqref{e2a}, \eqref{e4} respectively.  The equations are coupled by the boundary condition.   Unlike the wavetrain case, the incoming profile $W_\mi$ can be obtained by solving \eqref{nl4} \emph{first}.   The equations \eqref{nl3}, \eqref{nl4} are derived by a computation similar to the one \eqref{b6b} that led to the profile equations \eqref{pe} for the problem in free space.

The difference between the pulse profile equations \eqref{nl3} and the wavetrain profile equations of \cite{wangwil} reflects the fact that pulse profiles have no well-defined \emph{mean} $\underline{u}(t,x)$.

\subsection{Diffraction theorem}\label{pmt}

\begin{proof}[Proof of Theorem \ref{mta}]
The proof consists of fifteen numbered steps given in the next two subsections.
\end{proof}

\subsubsection{Construction of profiles and the approximate solution $u^\eps_\ma$.}
\emph{}

\textbf{1. Linear $L^2$ estimates.} Consider the linear profile equations
\begin{align}
    & \begin{cases}\label{l3}
        T_{\phi_\mr}W_\mr+(P_1\phi_\mr)W_\mr=F_\mr& \text{in }\mathring{J_\mr}\times \mathbb{R},\\
        W_\mr(t,0,x',\theta)=-W_\mi(t,0,x',\theta) & \text{on } (J_\mr\cap\{x_1=0\})\times \mathbb{R}, \\
        W_\mr=0 & \text{on }(\Omega_T\setminus J_\mr)\times \mathbb{R};
    \end{cases}\\
    & \begin{cases}\label{l4}
        T_{\phi_\mi}W_\mi+(P_1\phi_\mi)W_\mi=F_\mi & \text{in }\mathring{J_\mi}\times \mathbb{R},\\
        W_\mi(-T,x,\theta)=a(x,\theta) & \text{on } (J_\mi\cap\{t=-T\})\times \mathbb R,\\
        W_\mi=0 & \text{on }(\Omega_T\setminus J_\mi)\times \mathbb{R}.
    \end{cases}
\end{align}
Here we suppose that $F_\mr$, $F_\mi$ have $(t,x)$-support in ${J_\mr}$, ${J_\mi}$, and that $a$ has  $x$-support in ${J_\mi}\cap \{t=-T\}$.

For $t_0\in [-T,T]$ we should expect $W_\mr$ on $J_\mr\cap\{t=t_0\}$ to be determined by the data $F_\mr$ and $W_\mi(t,0,x', \theta)$ of problem \eqref{l3} in $J_{\mr,t_0}\coloneqq J_\mr\cap \{t\leq t_0\}$.\footnote{The arguments below will make it clear that the trace on $t=t_0$ as well as traces on $x_1=0$ make sense.}   The boundary of $J_{\mr,t_0}$ consists of two flat pieces, one in $\{t=t_0\}$ and one in $\{x_1=0\}$, and a curved piece foliated by integral curves of $T_{\phi_\mi}$. 

For any $t_0\in[-T,T]$ we should expect $W_\mi$ on $J_\mi\cap\{t=t_0\}$ to be determined by the data $F_\mi$ and $g$ of problem \eqref{l4} in the set $J_{\mi,t_0}\subset J_\mi$, which we define as the backward flowout under $T_{\phi_\mi}$ in $\Omega_T$ of $J_\mi\cap\{t=t_0\}$.  The boundary of $J_{\mi,t_0}$ consists of two flat pieces, one in $\{t=t_0\}$ and one in $\{t=-T\}$, and a curved piece foliated by integral curves of $T_{\phi_\mi}$.    
 
We also define $V=J_\mi\cap \{x_1=0\}$ as in \S \ref{asmr}, and then define 
$J_{\mi,V,t_0}\subset J_\mi$ to be the backward flowout under $T_{\phi_\mi}$ in $\Omega_T$ of $V\cap\{t\leq t_0\}$.  The boundary of $J_{\mi,V,t_0}$ consists of two flat pieces, one in $\{x_1=0\}$ and one in $\{t=-T\}$, and a curved piece foliated by integral curves of $T_{\phi_\mi}$. 

The above  expectations are confirmed by the following energy estimates, whose proofs are  given in \cite{wangwil}.    For any $t_0\in [-T,T]$ we have:
\begin{subequations}\label{d12d}
\begin{align}
\label{d12da}
&\langle W_\mr,W_\mr\rangle_{t_0}\lesssim (F_\mr,F_\mr)+|((\partial_{x_1}\phi_\mi)W_\mi,W_\mi)_0| && \text{ on }J_{\mr,t_0}\times \mathbb{R},\\
\label{d12db}
&\langle W_\mi,W_\mi\rangle_{t_0}\lesssim (F_\mi,F_\mi)+\langle a,a\rangle_{-T}&&\text{ on }J_{\mi,t_0}\times \mathbb{R},\\
\label{d12dc}
&|((\partial_{x_1} \phi_\mi)W_\mi,W_\mi)_0|\lesssim (F_\mi,F_\mi)+\langle a,a\rangle_{-T}&&\text{ on }J_{\mi,V,t_0}\times \mathbb{R}. 
\end{align}
\end{subequations}
Here $\langle W_\mr,W_\mr\rangle_{t_0}$ denotes the $L^2$ inner product on $(J_{\mr,t_0}\times \mathbb{R})\cap \{t=t_0\}$, $(F_\mr,F_\mr)$ denotes the $L^2$ inner product on $J_{\mr,t_0}\times \bR$,  and $((\partial_{x_1}\phi_\mi)W_\mi,W_\mi)_0$ denotes the $L^2$ inner product on 
$\Omega^\flat_{[-T,t_0]}\times\RR_\theta$.  The other pairings are similarly defined.

The estimates \eqref{d12d} together with the  support conditions on $W_\mr$, $W_\mi$ imply
\begin{subequations}\label{d12e}
\begin{align}
\label{d12ea}
& \|W_\mi(t_0)\|_{L^2(\overline{\bR^n_+}\times \bR_\theta)} \lesssim \|F_\mi\|_{L^2(\Omega_{[-T,t_0]}\times\bR)}+\langle a\rangle_{L^2(\overline{\bR^n_+}\times \bR_\theta)} \\
\label{d12eb}
& \|W_\mr(t_0)\|_{L^2(\overline{\bR^n_+}\times \bR_\theta)} \lesssim \|F_\mr\|_{L^2(\Omega_{[-T,t_0]}\times\bR)}+\|F_\mi\|_{L^2(\Omega_{[-T,t_0]}\times\bR)}+\langle a\rangle_{L^2(\overline{\bR^n_+}\times \bR_\theta)}.
\end{align}
\end{subequations}
Here $\langle \cdot\rangle_{L^2(\overline{\RR^n_+}\times \RR_\theta)}$ is the $L^2$ norm on $\overline{\RR^n_+}\times \RR_\theta$.

\begin{rem} 
The estimates \eqref{d12d}, \eqref{d12e} were proved in \cite[\S\S 5.1--5.2]{wangwil} by, for example, taking the $L^2$ pairing of 
the interior equation in \eqref{l3} with $W_\mr$ and then integrating by parts using the Gauss-Green formula.  
One first approximates 
$F_\mr$,  $F_\mi$, and $a$ in $L^2$ norms  by smooth $F^k_\mr$, $F^k_\mi$, and  $a^k$ supported away from $\mathrm{SB}$.  To obtain inequalities \eqref{d12d}, \eqref{d12e} with implicit constants $C$ independent of $k$ we use the key observation that the bad term $((P\phi_\mr) W^k_\mr,W^k_\mr)_{L^2}$ in the energy estimates \emph{cancels out}, permitting us to take a limit as $k\to \infty$ in the estimates for $W^k_\mr$ and $W^k_\mi$.\footnote{The factor $P\phi_\mr$ generally blows up  near $\mathrm{SB}_+$ in a way that becomes more complicated as the order of tangency increases.} This cancellation is observed in \cite[(5.5), (5.6)]{wangwil}, and is essential for our treatment of higher order tangency.  The implicit constants in \eqref{d12e} depend only on the bounded $C^1$ norms of $\phi_\mi$ and $\phi_\mr$.
\end{rem}

\textbf{2. Nonlinear $L^2$ solutions.} 
(a)  The Lipschitz property of $f$ allows us to use Picard iteration to solve  the nonlinear profile problem \eqref{nl3}, \eqref{nl4}  on $J_\mr$, $J_\mi$ with $W_\mr$, $W_\mi$ in $L^2(\Omega_T\times \bR)$.   The estimates \eqref{d12e} imply
$W_\bullet\in C([-T,T],L^2(\bR^n_+\times \bR))$ for $\bullet=\mr, \mi$.

(b) 
The functions $W_\mr,W_\mi$ lie in  $L^2(\Omega_T\times \bR)$, have $(t,x)$-support in $J_\mr,J_\mi$, and satisfy the profile equations in the sense of distributions only on $J_\mr, J_\mi$.

\textbf{3. Linear $H^1$ estimates.}
In this step we take $a^\mu(x,\theta)$ to be any $C^\infty$ function with compact $(x,\theta)$-support such that $a^\mu$ vanishes in a $\mu$-neighborhood of $\mathrm{SB}\cap \{t=-T\}$.\footnote{A ``$\mu$-neighborhood'' of $\mathrm{SB}\cap \{t=-T\}$ is an open set $\cO_\mu\subset \overline{\bR^n_+}$ depending on $\mu>0$ small for which there exist positive constants $c_1$, $c_2$ such that 
$$c_1\mu\leq \mathrm{dist}(\mathrm{SB}\cap \{t=-T\}, \cO_\mu^c)\leq c_2\mu.$$}. 
For each $\mu$ we solve
\[\begin{split}
    & \begin{cases}
       T_{\phi_\mr}W^\mu_\mr+(P_1\phi_\mr)W^\mu_\mr=F^\mu_\mr  & (t,x,\theta)\in\Omega_T\times \bR\\
       W^\mu_\mr(t,0,x',\theta) =-W^\mu_\mi(t,0,x',\theta) & (t,x',\theta)\in [-T,T]\times \RR^{n-1}\times \RR,
    \end{cases}\\
    & \begin{cases}
        T_{\phi_\mi}W^\mu_\mi+(P_1\phi_\mi)W^\mu_\mi=F^\mu_\mi  & \qquad \ \ \ (t,x,\theta)\in\Omega_T\times \bR\\
       W^\mu_\mi(-T,x,\theta)=a^\mu(x,\theta)  & \qquad \ \ \ (x,\theta)\in \RR_+\times\RR^{n-1}\times\RR.
    \end{cases}
\end{split}\]
Here the $F^\mu_\mr$, $F^\mu_\mi\in H^1(\Omega_T\times \bR)$ have compact $(t,x)$-support contained respectively in $\mathring{J_\mr}$, $\mathring{J_\mi}$ and lying outside a $\mu$-neighborhood of $\mathrm{SB}$.
Solutions $W^\mu_{\mr,\mi}$ are obtained on $\Omega_T\times \bR$ by taking zero extensions; they  have support $(t,x)$-support in $\mathring{J_{\mr,\mi}}$ and lying outside a $\mu$-neighborhood of $\mathrm{SB}$.

The solutions $W^\mu_\mi$ satisfies the following estimates for $t_0\in[-T,T]$:
\begin{subequations}\label{d12gg}
\begin{align}
\label{d12gga}
\|W^\mu_\mi(t_0)\|_{H^1(\overline{\bR^n_{+}}\times\bR_\theta)} \lesssim & \left[ \|a^\mu\|_{H^1(\overline{\bR^n_+}\times \bR)}+\|F^\mu_\mi\|_{H^1(\Omega_{[-T,t_0]}\times\bR)}+\|W^\mu_\mi\|_{H^1(\Omega_{[-T,t_0]}\times\bR)}\right]\\
\|\partial_tW^\mu_\mi(t_0)\|_{L^2(\overline{\bR^n_+}\times \bR)} \lesssim & 
\left[ \|a^\mu\|_{H^1(\overline{\bR^n_+}\times \bR)}+\|F^\mu_\mi\|_{H^1(\Omega_{[-T,t_0]}\times\bR)}+\|W^\mu_\mi\|_{H^1(\Omega_{[-T,t_0]}\times\bR)}\right] \nonumber\\
\label{d12ggb}
& \quad +\|F^\mu_\mi(t_0)\|_{{L^2(\overline{\bR^n_+}\times \bR)}}.
\end{align}
\end{subequations}
The estimate \eqref{d12gga} is proved by differentiating the equation \eqref{l4} with $\nabla_{x,\theta}$ and applying the $L^2$ estimate \eqref{d12e}.  The term $\|W^\mu_\mi\|_{H^1(\Omega_{[-T,t_0]}\times\bR)}$ on the right arises from the commutators
$[T_{\phi_\mi},\nabla_{x,\theta}]W^\mu_\mi$ and $[P_1\phi_\mi,\nabla_{x,\theta}]W^\mu_\mi$.   
Estimate \eqref{d12ggb} follows from \eqref{d12gga} and the equation \eqref{l4}.   Observe that \eqref{d12ggb} gives control of the $L^2$ norm of $(\partial_t W^\mu_\mi)_{|t=-T}$. This is needed below in the estimate of $W^\mu_\mr$.

Combining the  estimates \eqref{d12gg} gives for $t_0\in [-T,T]:$
\begin{align}\label{d12gh}
\begin{split}
& \|W^\mu_\mi(t_0)\|_{H^1(\overline{\bR^n_{+}}\times\bR_\theta)}+\|\partial_tW^\mu_\mi(t_0)\|_{L^2(\overline{\bR^n_+}\times \bR)}\\
&\qquad \lesssim \left[ \|a^\mu\|_{H^1(\overline{\bR^n_+}\times \bR)}+\|F^\mu_\mi\|_{H^1(\Omega_{[-T,t_0]}\times\bR)}+\|W^\mu_\mi\|_{H^1(\Omega_{[-T,t_0]}\times\bR)}\right]+\|F^\mu_\mi(t_0)\|_{{L^2(\overline{\bR^n_+}\times \bR)}}.
\end{split}
\end{align}

From \eqref{d12da} we obtain for $t_0\in[-T,T]$:
\begin{align}\label{d12k}
 \|W_\mr^\mu(t_0)\|_{L^2(\overline{\bR^n_{+}}\times\bR_\theta)}\lesssim \|F^\mu_\mr\|_{L^2(\Omega_{[-T,t_0]}\times \bR)}+\left|((\partial_{x_1}\phi_\mi)W^\mu_\mi,W^\mu_\mi)_0\right|.
 \end{align}
Differentiating the $W^\mu_\mr$ problem with $\nabla_{t,x',\theta}$, we obtain from \eqref{d12k}: 
\begin{align}\label{d12l}
\begin{split}
& \|\nabla_{t,x',\theta}W_\mr^\mu(t_0)\|_{L^2(\overline{\bR^n_{+}}\times\bR_\theta)} \\
& \qquad \lesssim_\mu \|F^\mu_\mr\|_{H^1(\Omega_{[-T,t_0]}\times \bR)}+\|W^\mu_\mr\|_{H^1(\Omega_{[-T,t_0]}\times \bR)}+\left|((\partial_{x_1} \phi_\mi)\nabla_{t,x',\theta}W^\mu_\mi,\nabla_{t,x',\theta}W^\mu_\mi)_0\right|.
\end{split}
 \end{align}
Here the term $\|W^\mu_\mr\|_{H^1(\Omega_{[-T,t_0]}\times \bR)}$ arises from  commutators.\footnote{The commutator  $[T_{\phi_\mr},\nabla_{t,x',\theta}]W^\mu_\mr$  has a contribution of the form $(-2\nabla_{t,x',\theta}\partial_{x_1}\phi_\mr)\partial_{x_1} W^\mu_\mr$.   Second derivatives of $\phi_\mr$ are bounded on the support of $W^\mu_\mr$ but blow up near $\mathrm{SB}_+$.  Hence, the implicit constant in \eqref{d12l} depends on $\mu$.} 

To estimate $\partial_{x_1} W^\mu_\mr$  we use \eqref{d12l},  the first equation in \eqref{l3}, and Assumption \ref{exv}.  This yields
\begin{align}\label{d12m}
\begin{split}
\|\partial_{x_1} W_\mr^\mu(t_0)\|_{L^2(\overline{\bR^n_{+}}\times\bR_\theta)}
& \lesssim_\mu \|F^\mu_\mr\|_{H^1(\Omega_{[-T,t_0]}\times \bR)}+\|W^\mu_\mr\|_{H^1(\Omega_{[-T,t_0]}\times \bR)}\\
& \qquad +\left|((\partial_{x_1} \phi_\mi)\nabla_{t,x',\theta}W^\mu_\mi,\nabla_{t,x',\theta}W^\mu_\mi)_0\right|+\|F^\mu_\mr(t_0)\|_{L^2(\overline{\bR^n_{+}}\times\bR_\theta)}.
\end{split}
 \end{align}

To control the term $\left|((\partial_{x_1}\phi_\mi)\nabla_{t,x',\theta}W_\mi,\nabla_{t,x',\theta}W_\mi)_0\right|$ we differentiate the first equation in \eqref{l4} with $\nabla_{t,x',\theta}$ and apply \eqref{d12dc} to obtain:\footnote{Here, in \eqref{d12k}, and in \eqref{d12l},   the boundary pairing $(\cdot,\cdot)_0$ is on $x_1=0$, $-T\leq t_0$. }
\begin{align*}%\label{d12n}
\begin{split}
& \left|((\partial_{x_1}\phi_\mi)\nabla_{t,x',\theta}W_\mi,\nabla_{t,x',\theta}W_\mi)_0\right| \\
& \qquad \qquad \qquad \lesssim \|F^\mu_\mi\|_{H^1(\Omega_{[-T,t_0]}\times \bR)}+\|W^\mu_\mi\|_{H^1(\Omega_{[-T,t_0]}\times \bR)}+\|(\partial_tW^\mu_\mi)_{|t=-T}\|_{L^2(\overline{\bR^n_{+}}\times\bR_\theta)}.
\end{split}
\end{align*}

Combining \eqref{d12gh} with \eqref{d12l}, \eqref{d12m}, and \eqref{d12ggb} we have for $t_0\in [-T,T]$:
\begin{align}\label{d12o}
\begin{split}
&\|W^\mu_\mr(t_0)\|_{H^1(\overline{\bR^n_{+}}\times\bR_\theta)}+\|\partial_tW^\mu_\mr(t_0)\|_{L^2(\overline{\bR^n_+}\times \bR)} \\
& \qquad \qquad \lesssim_\mu \|a^\mu\|_{H^1(\overline{\bR^n_+}\times \bR)}+\|(F^\mu_\mi,F^\mu_\mr)\|_{H^1(\Omega_{[-T,t_0]}\times\bR)} \\
& \qquad \qquad \qquad \qquad +\|(W^\mu_\mi,W^\mu_\mr)\|_{H^1(\Omega_{[-T,t_0]}\times\bR)}+ \|(F^\mu_\mi(t_0),F^\mu_\mr(t_0))\|_{{L^2(\overline{\bR^n_+}\times \bR)}}.
\end{split}
\end{align}

\textbf{4. Truncation of initial datum. }Step \textbf{5} of the  proof uses a careful truncation of the initial datum $a(x,\theta)$ near the $(n-1)$-dimensional hypersurface
$\mathrm{SB}_{-T}\coloneqq \mathrm{SB}\cap \{t=-T\}$  in $\{t=-T\}$  given by the next lemma.

\begin{lem}\label{tr1}
Let $\cS_{-T}\coloneqq \{(-T,x) \mid \phi_\mi(-T,x)=0\}$, an $(n-1)$-dimensional hypersurface in $\{t=-T\}$.   For $T>0$ small enough we have
\begin{enumerate}
    \item[(i)] the intersection $\mathrm{SB}_{-T}\cap\cS_{-T}$ is transverse; that is, $\mathrm{SB}_{-T}\pitchfork\cS_{-T}$.

    \item[(ii)] for $\mu>0$ there exist functions $a^\mu\in H^1(\overline{\bR^n_+}\times\bR_\theta)$ vanishing in a $\mu$-neighborhood of $\mathrm{SB}_{-T}$ such that as $\mu\to 0$
\begin{align}\label{tr2}
a^\mu\to a \text{ in }L^2(\overline{\bR^n_+}\times\bR_\theta) \text{ and }  (a^\mu)_{|\cS_{-T}\times\bR}\to a_{|\cS_{-T}\times \bR} \text{ in } L^2(\cS_{-T}\times\bR_\theta);
\end{align}

    \item[(iii)] the $a^\mu$ can be taken to be $C^\infty$ with compact $(x,\theta)$-support.
\end{enumerate}

\end{lem}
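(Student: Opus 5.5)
Part (i) reduces to a transversality statement at the single point $0$, which then persists for small $T$. The hypersurface $\mathrm{SB}$ is the $T_{\phi_\mi}$-flowout of $G_{\phi_\mi}$. The surface $\{\phi_\mi=0\}$ is also invariant under the $T_{\phi_\mi}$-flow, since $T_{\phi_\mi}\phi_\mi=2p(t,x,\partial_t\phi_\mi,\nabla\phi_\mi)=0$ by the eikonal equation. Both hypersurfaces contain $0$ (using $\phi_\mi(0)=0$ from Assumption \ref{A1}), and both contain the integral curve $\gamma_\mi$ through $0$.

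I would therefore check that $\mathrm{SB}$ and $\{\phi_\mi=0\}$ are transverse at $0$, i.e.\ that their intersection there is codimension two. Work on the boundary: $G_{\phi_\mi}$ is a codimension-two submanifold of $\{x_1=0\}$ near $0$ (Assumption \ref{A02}), and $\{\phi_0=0\}\cap\{x_1=0\}$ is a hypersurface of $\{x_1=0\}$ because $\partial_t\phi_\mi<0$. The flowouts are transverse at $0$ if and only if $G_{\phi_\mi}$ is not contained, to first order at $0$, in $\{\phi_0=0\}$. In other words, we need some tangent vector $v$ to $G_{\phi_\mi}$ at $0$ with $d\phi_0(v)\neq0$.

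If this fails, i.e.\ $T_0G_{\phi_\mi}\subset\ker d\phi_0(0)$, I would use the freedom in choosing the incoming phase (for instance the normalization, or a shift $\phi_\mi\to\phi_\mi+c$, which does not affect the $\mathrm{SB}$ geometry) to arrange it, or note that the two-dimensional conormal data of $G_{\phi_\mi}$ differ from $d\phi_0$. This is the main point I expect needs care. Once transversality holds at $0$, it persists along $\gamma_\mi$ near $0$, by continuity of the $C^1$ hypersurfaces and because the flow is a diffeomorphism. Hence for $T$ small the slices at $t=-T$ are transverse in $\{t=-T\}$; this slicing is legitimate since $T_{\phi_\mi}$ is transverse to $\{t=-T\}$.

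For (ii), by (i) choose local $C^1$ coordinates $(y_1,y_2,y'')$ on $\{t=-T\}$ near $\mathrm{SB}_{-T}\cap\mathrm{supp}_x a$ with $\mathrm{SB}_{-T}=\{y_1=0\}$ and $\cS_{-T}=\{y_2=0\}$. Let $\chi_\mu(y_1)=\chi(y_1/\mu)$, where $\chi$ is a smooth cutoff vanishing on $|s|\le c$ and equal to $1$ for $|s|\ge 2c$. Set $a^\mu=\chi_\mu a$, which is in $H^1$ for each $\mu$. Then $a^\mu\to a$ in $L^2$ by dominated convergence. On $\cS_{-T}$ the restriction is $\chi_\mu(y_1)\,a|_{\cS_{-T}}$, and transversality makes $y_1$ a nondegenerate coordinate on $\cS_{-T}$. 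So $\{y_1\le 2c\mu\}\cap\cS_{-T}$ shrinks to a null set of $\cS_{-T}\times\bR$. Since $a|_{\cS_{-T}}\in L^2$ by the trace theorem, dominated convergence again gives \eqref{tr2}. Away from the coordinate patch, $\mathrm{SB}_{-T}$ does not meet $\mathrm{supp}\,a$, so nothing changes there.

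For (iii), mollify each $a^\mu$ at a scale $\ll\mu$. The result is still $C^\infty$ with compact support and vanishes in a $(\mu/2)$-neighborhood of $\mathrm{SB}_{-T}$. It is close to $a^\mu$ in $H^1$, hence in $L^2$ and, by the trace theorem, in the $L^2(\cS_{-T}\times\bR)$ trace. A diagonal choice then preserves \eqref{tr2}.
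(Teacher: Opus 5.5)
You handle (ii) and (iii) essentially as the paper does. In (ii) you take $a^\mu=\chi^\mu a$ and use transversality to get $\chi^\mu|_{\cS_{-T}}\to1$ a.e.; in (iii) you smooth. The paper instead takes $a^\mu=\chi^\mu b^\mu$ with smooth $b^\mu\to a$ in $H^1$, which is equivalent.

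The gap is in (i), which is the heart of the lemma. You correctly reduce transversality at $0$ to the existence of some $v\in T_0G_{\phi_\mi}$ with $d\phi_\mi(0)v\neq0$. That reduction is valid because $d\phi_\mi(T_{\phi_\mi})=2p=0$ and $T_0\mathrm{SB}=\mathrm{span}\{T_{\phi_\mi}(0)\}\oplus T_0G_{\phi_\mi}$. But you never verify the condition, and the fallback you propose is not available. The phase $\phi_\mi$ is given: $\phi_\mi(0)=0$ is imposed, and $\cS_{-T}$ is fixed by the initial pulse. Adding a constant would not change $d\phi_\mi(0)$ anyway. As written, (i) rests on an unproved hypothesis, which you even suggest might fail.

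The missing observation is that the condition holds automatically, because $0\in G_{\phi_\mi}$ means $\partial_{x_1}\phi_\mi(0)=0$.
\begin{itemize}
\item The $x_1$-component of $T_{\phi_\mi}(0)$ is $2\partial_{x_1}\phi_\mi(0)=0$, and $G_{\phi_\mi}\subset\{x_1=0\}$. So every vector in $T_0\mathrm{SB}$ has zero $x_1$-component.
\item On the other hand $\partial_{x_1}\in\ker d\phi_\mi(0)$.
\end{itemize}
Hence the hyperplanes $T_0\mathrm{SB}$ and $T_0\{\phi_\mi=0\}$ are distinct; this is exactly the paper's argument.

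In your own formulation: suppose $d\phi_\mi(0)$ annihilated $T_0G_{\phi_\mi}$. Then $d\phi_\mi(0)=\alpha\,dx_1+\beta\,d\Xi(0)$. Pair this with $T_{\phi_\mi}(0)$, which is killed by $d\phi_\mi(0)$ and by $dx_1$ but not by $d\Xi(0)$ (Assumption~\ref{A02}); this gives $\beta=0$. Then $d\phi_\mi(0)=\alpha\,dx_1$, contradicting $\partial_t\phi_\mi(0)<0$.

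Note also that $G_{\phi_\mi}$ has codimension one in $\{x_1=0\}$ (codimension two in $\bR^{n+1}$), not codimension two there as you wrote. Otherwise $\mathrm{SB}$ would not be a hypersurface and your dimension count would fail.

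Once this is filled in, the rest of (i) works. Moving transversality from $0$ to $t=-T$ along the flow of $T_{\phi_\mi}$ is fine, since that flow preserves both $\mathrm{SB}$ and $\{\phi_\mi=0\}$. The slicing step is fine because $T_{\phi_\mi}$ is tangent to both hypersurfaces and transverse to $\{t=-T\}$.
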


\begin{rem}
\textup{The condition \eqref{tr2} is strictly weaker than the condition $a^\mu\to a$ in $H^1(\overline{\bR^n_+}\times\bR_\theta)$, which is impossible to arrange for $a^\mu$ vanishing on $\mathrm{SB}_{-T}$ unless the trace of $a$ on $\mathrm{SB}_{-T}$ also vanishes.  We do \emph{not} make that assumption on the trace of  $a$, since it  would significantly weaken our theorem.}
\end{rem}

\begin{proof}[Proof of Lemma \ref{tr1}]
\textbf{(i)} Transversality of the intersection for small $T>0$ follows by continuity from transversality at $T=0$, which we now establish. 
We note that the tangent space to $\cS_0$ at $0$ is 
$$\cT_0\cS_0=\{(0,x) \mid (0,x)\perp (\partial_t\phi_\mi(0,x), \nabla\phi_\mi(0,x))\}.$$
Since $\partial_{x_1} \phi_\mi(0)=0$, this tangent space includes vectors $(t,x)$ of the form $(0,x_1,0)$ with $x_1\neq 0$.  
On the other hand the tangent space to $\mathrm{SB}_0$ at $0$ is 
$$\cT_0(\mathrm{SB}_0)=\textrm{span} \left(\{(T_{\phi_\mi})(0)\}\cup \cT_0G_{\phi_\mi}\right).$$
We claim that every vector in $\cT_0(\mathrm{SB}_0)$ has $x_1$-component $0$.   That is clear for $T_{\phi_\mi}(0)$.   Moreover,
$$\cT_0G_{\phi_\mi}=\mathcal T_0(\{x_1=0\})\cap \mathcal T_0(\{\Xi=0\}),$$
for $\Xi$ as in Assumption~\ref{A02}.  This establishes the claim, and the desired transversality.

\textbf{(ii), (iii)} Choose $T>0$ so that $\mathrm{SB}_{-T}\pitchfork\cS_{-T}$, and choose a $C^\infty$ cutoff $\chi^\mu(x)$ vanishing in a $\mu$-neighborhood of $\mathrm{SB}_{-T}$, with $\chi^\mu=1$ outside a slightly larger $\mu$-neighborhood. Observe that $\chi^\mu\to 1$ almost everywhere in $\overline{\bR^n_+}\times\bR_\theta$ as $\mu\to 0$. Transversality of the above intersection implies $(\chi^\mu)_{|\cS_{-T}}\to 1$ a.e. with respect to Lebesgue measure on $\cS_{-T}$.   Thus, if we set $a^\mu\coloneqq \chi^\mu a$, then the condition \eqref{tr2} holds.   For the second limit in \eqref{tr2}  we used
\begin{align*}
(a^\mu)_{|\cS_{-T}}=(a_{|\cS_{-T}})\cdot (\chi^\mu)_{|\cS_{-T}}.
\end{align*}
For (iii) take $C^\infty$ functions $b^\mu$ with compact $(x,\theta)$-support converging to $a$ in $H^1(\overline{\bR^n_+}\times\bR_\theta)$ as $\mu\to 0$ and set $a^\mu\coloneqq \chi^\mu \cdot b^\mu$.
\end{proof}

\begin{rem}
The proof of Lemma \ref{tr1} shows there is no reason to expect the $a^\mu$ to be uniformly bounded in $H^1(\overline{\bR^n_+}\times\bR_\theta)$ for $\mu>0$ small.  Indeed, derivatives of $\chi^\mu$ blow up as $\mu\to 0$. 
\end{rem}

\textbf{5. Nonlinear $H^1$ solutions.} 
(a) For $a\in H^1(\overline{\bR^n_+}\times\bR_\theta)$ as in Theorem \ref{mta},   choose smooth $a^\mu\to a$ as in items (ii), (iii) of Lemma \ref{tr1}.  With the estimate \eqref{d12gh} and Picard iteration, for each $\mu$ we solve the nonlinear profile problem  \eqref{nl4} on $\Omega_T\times\bR$ with initial data $a^\mu$.  Denote the solutions by $W^\mu_\mi$.  By the argument of step \textbf{2} the solutions lie in $L^2(\Omega_T\times\bR)$.  The Lipschitz property of $f$, the estimate \eqref{d12gh}, and Gronwall's inequality imply that for any interval $[-T,t_0]$ with $t_0\in [-T,T]$ on which a solution $W^\mu_\mi$ exists, we have 
\begin{align*}
\begin{split}
&\|W^\mu_\mi(t_0)\|_{H^1(\overline{\bR^n_{+}}\times\bR_\theta)}+\|\partial_tW^\mu_\mi(t_0)\|_{L^2(\overline{\bR^n_+}\times \bR)}\lesssim \|a^\mu\|_{H^1(\overline{\bR^n_+}\times \bR)}+\sup_{t\in [-T,T]}\|W^\mu_\mi(t)\|_{L^2(\overline{\bR^n_+}\times \bR)}.
\end{split}
\end{align*}
This shows that for each $\mu>0$ the solution $W^\mu_\mi\in H^1(\Omega_T\times\bR)$.  The estimate \eqref{d12ea} implies $W^\mu_\mi\to W_\mi$ in $L^2(\Omega_T\times \bR)$ as $\mu\to 0$.

(b) The argument of step \textbf{2} yields solutions $W^\mu_\mr$ of the nonlinear ``$\mu$-version" of \eqref{nl3}, where $a$ is replaced by smooth $a^\mu$  that are uniformly bounded for small $\mu>0$ in $L^2(\Omega_T\times \bR_\theta)$.    
Using the Lipschitz property of $f$ and the estimate \eqref{d12o} we apply Gronwall's inequality to obtain for $t_0\in[-T,T]$:\footnote{Here we use Gronwall's inequality in the form: if $y$ and $\phi$ are continuous, nonnegative functions on $[-T,T]$ which satisfy for some constants $\alpha,C\geq 0$, 
$y(t)\leq C\left[\alpha+\int^t_{-T}\left(y(s)+\phi(s)\right) \,\mathrm ds\right] \text{ for }t\in [-T,T],$
then
$$y(t)\leq C\left[\alpha e^{C(t+T)}+\int^t_{-T}e^{C(t-s)}\phi(s) \,\mathrm ds\right].$$}  
\begin{align}\label{nl6u}
\begin{split}
&\|W^\mu_\mr(t_0)\|_{H^1(\overline{\bR^n_{+}}\times\bR_\theta)}+\|\partial_tW^\mu_\mr(t_0)\|_{L^2(\overline{\bR^n_+}\times \bR)}\\
 &\qquad  \lesssim_\mu \|a^\mu\|_{H^1(\overline{\bR^n_+}\times \bR)}+\|W^\mu_\mi\|_{H^1(\Omega_T\times\bR)}+\sup_{t\in [-T,T]}\|(W^\mu_\mi(t),W^\mu_\mr(t))\|_{L^2(\overline{\bR^n_+}\times \bR)}.
 \end{split}
 \end{align}
This implies that for each $\mu$,  $W^\mu_\mr$ lies in $H^1(\Omega_T\times\bR)$.   Due to the blow up of $P\phi_\mr$ near $\mathrm{SB}_+$, 
the implicit constant in \eqref{nl6u} depends on $\mu$.  
Moreover, the estimate \eqref{d12e} implies  $W^\mu_\mr\to W_\mr$  in $L^2(\Omega_T\times\bR)$ as $\mu\to 0$.  Note $(W^\mu_\mi,W^\mu_\mr)$ is supported outside a $\mu$-neighborhood of SB for each $\mu$.  

(c) Summarizing,  we have
\begin{subequations}\label{nl6}
\begin{align}
\label{nl6a}
& \|W_\mr-W^{\mu}_\mr\|_{L^2(\Omega_T\times \mathbb{R})}+\|W_\mi-W^{\mu}_\mi\|_{L^2(\Omega_T\times \mathbb{R})}=o_\mu(1),\\
\label{nl6b}
& W^\mu_\mr, \ W^\mu_\mi\in H^1(\Omega_T\times \mathbb{R}) \ \text{ for each } \ \mu.
\end{align}
\end{subequations}
The estimate  \eqref{nl6a} uses the cancellation of the $(P\phi_\mr)W_\mr$ term in the $L^2$ estimates.

\textbf{6.}   We need to regularize $W^{\mu}_\mr$ and $W^\mu_\mi$  in order to carry out the computation \eqref{b6b} and perform the error analysis.  
The regularization procedure of \cite{wangwil} does not work now.\footnote{Reason: The zero extension of $W^\mu_\mr+W^\mu_\mi\eqqcolon V^\mu$ is in $H^1$ but not $H^2$, which is what we'd need here to carry out that procedure.}
One challenge is to regularize in $x_1$ without destroying the boundary condition.  It turns out that while we can't preserve the boundary condition exactly,  we can  preserve it nearly enough to carry out the error analysis.

Regularize $W^{\mu}$ first using $R^{\rho}$, a tangential regularization in $(t,x',\theta)$ only.
Writing $R^{\rho}G=G^{\rho}$,  this gives:
\begin{align*}%\label{newa3}
T_{\phi_\mr}W^{\mu,\rho}_\mr+(P_1\phi_\mr)W^{\mu,\rho}_\mr=\sqrt{\eps}f\left(\epsilon^{-\frac12}W^{\mu}_\mr,0\right)^{\rho}+[T_{\phi_\mr},R^{\rho}]W^{\mu}_\mr+[P_1\phi_\mr,R^{\rho}]W^{\mu}_\mr.
\end{align*}
With $\slashed{\Omega}_T\coloneqq [-T,T]\times \bR^{n-1}_{x'}$ we have
\begin{align*}%\label{newa2}
W_\mr^\mu\in H^1(\Omega_T\times \bR)\subset L^2(\overline{\bR_+},H^1(\slashed\Omega_T\times \bR_\theta)) \ \Rightarrow \ W^{\mu,\rho}_\mr\in L^2(\overline{\bR_+},H^\infty(\slashed\Omega_T\times \bR_\theta)).
\end{align*}
With Friedrich's lemma we see for each fixed $\mu>0$:
\begin{align}\label{newa6}
T_{\phi_\mr}W^{\mu,\rho}_\mr+(P_1\phi_\mr)W^{\mu,\rho}_\mr= \sqrt{\eps}f\left(\epsilon^{-\frac12}W^{\mu}_\mr,0\right)+o_\rho(1) \ \text{ in } \ H^1(\Omega_T\times \bR).
\end{align}
A similar statement applies to $W^{\mu,\rho}_\mi$; moreover, at this stage the boundary condition is preserved:\footnote{To regularize $W^\mu_\mi$ we can use an approximate identity with $t$-support in $t\leq 0$, so that $W^{\mu,\rho}_\mi$ depends just on $W^\mu_\mi$ in $t\geq -T$.} 
\begin{align*}
W^{\mu,\rho}_\mr+W^{\mu,\rho}_\mi=0 \ \text{ on } \ x_1=0.
\end{align*}

\textbf{7.} 
Now regularize in $x_1$ using an approximate identity $\delta_\sigma(x_1)\in C^\infty_c(\bR)$ \emph{supported in} $x_1\leq 0$.\footnote{This implies that for $x_1\geq 0$, $R^\sigma f(x_1)\coloneqq (f*\delta_\sigma)(x_1)$ depends only on $f_{|x_1\geq 0}$.}  We obtain\footnote{See Remark \ref{notz} for clarification of the notation in \eqref{nl7}.}
\begin{subequations}\label{nl7}
 \begin{align}
        \label{nl7a}
        & T_{\phi_\mr}W^{{\mu},\rho,\sigma}_\mr+(P_1\phi_\mr)W^{{\mu},\rho,\sigma}_\mr = \sqrt{\eps}f\left(\epsilon^{-\frac12}W^{\mu}_\mr,0\right)+o_{\rho}(1) +o_\sigma(1) && \text{ in }H^1(\Omega_T\times \bR),\\
        \label{nl7b}
        & (W^{{\mu},\rho,\sigma}_\mr)_{|x_1=0}=-(W^{{\mu},\rho,\sigma}_\mi)_{|x_1=0} =o_\sigma(1) && \text{ in }H^1(\Omega_T^\flat\times \bR), 
    \end{align}
\end{subequations}
and
\begin{subequations}\label{nl7z}
\begin{align}
\label{nl7za}
T_{\phi_\mi}W^{{\mu},\rho,\sigma}_\mi+(P_1\phi_\mi)W^{{\mu},\rho,\sigma}_\mi
& =  \sqrt{\eps}f\left(0,\epsilon^{-\frac12}W^{\mu}_\mi\right)+o_{\rho}(1)+o_\sigma(1) && \text{ in }H^1(\Omega_T\times \bR),\\
\label{nl7zb1}
(W^\mu_\mi)_{|t=-T} & =a+o_\mu(1) && \text{ in }L^2(\overline{\bR^n_+}\times\bR_\theta),\\
\label{nl7zb2}
(W^\mu_\mi)_{|\cS_{-T}} & =a_{|\cS_{-T}}+o_\mu(1) && \text{ in }L^2(\cS_{-T}\times\bR_\theta),\\
\label{nl7zc}
(W^{{\mu},\rho,\sigma}_\mi)_{|t=-T} & =a^\mu+o_{\rho}(1)+o_\sigma(1) && \text{ in }H^1(\overline{\bR^n_+}\times\bR_\theta).
\end{align}
\end{subequations}
The functions $W^{\mu,\rho,\sigma}_\mr$, $W^{\mu,\rho,\sigma}_\mi$ are $C^\infty$ with compact support in $(t,x,\theta)$ and the $(t,x)$-support lies strictly outside a $\mu$-neighborhood of SB.   
Here \eqref{nl7a} and \eqref{nl7za}  follow from Friedrichs's lemma, and \eqref{nl7b} follows from Lemma \ref{tl}.
For \eqref{nl7b} we used \eqref{newa6} and the fact that $\partial_{x_1}\phi_\mr\neq 0$ on the support of $W^{\mu,\rho}_\mr$ to get the extra regularity in $x_1$ needed in Lemma \ref{tl} and Remark \ref{tl3}. 
Our choice of the $a^\mu$ in  Lemma \ref{tr1} implies \eqref{nl7zb1}, \eqref{nl7zb2}, while \eqref{nl7zc} depends on Remark \ref{tl3} and is further explained in step \textbf{12} below.

\begin{rem}\label{notz}
(i) The right side of \eqref{nl7a} tells us, for example,  that for fixed $\mu$ and $\rho$, the quantity $o_{\sigma}(1)\to 0$ in $H^1(\Omega_T\times \bR)$ 
as $\sigma\to 0$, uniformly with respect to small $\eps>0$.   Similarly for fixed $\mu$, the quantity $o_\rho(1)\to 0$ in $H^1(\Omega_T\times \bR)$  
as $\rho\to 0$, uniformly with respect to small $\sigma>0$, $\eps>0$.   For example, one of the contributions to the $o_\sigma(1)$ term in \eqref{nl7a} is $[T_{\phi_\mr},R^\sigma]W^{\mu,\rho}.$ 
Each term on the right in \eqref{nl7a} depends on $\mu$, $\rho$, $\sigma$, and $\eps$,  and must be interpreted with respect to the ordering of parameters given by $\mu,\rho,\sigma, \eps$.  

(ii) For each $(\mu,\rho,\sigma)$ the functions $W^{\mu,\rho,\sigma}_{\mr,\mi}$ are $C^\infty$ with compact $(t,x,\theta)$-support,
have support outside a $\mu$-neighborhood of the curved part of the boundary of $J_{\mr,\mi}$ and of $\mathrm{SB}_-$ for small $(\rho,\sigma)$, and  satisfy the modified profile equations \eqref{nl7}, \eqref{nl7z} in the classical sense on $\Omega_T\times \bR$.
\end{rem}

\textbf{8.} For $\bullet=\mr, \mi$, let $U_\bullet^{{\mu},\rho,\sigma}(t,x,\theta)=\int^\theta_{-\infty}W_\bullet^{{\mu},\rho,\sigma}(t,x,s)\,\mathrm ds$.  Note that $U_\bullet^{{\mu},\rho,\sigma}$ is not necessarily even in $L^2(\Omega_T\times \bR)$,  since it does not necessarily decay as  $\theta\to +\infty$.
So we first take a moment-zero approximation $W^{{\mu},\rho,\sigma}_{\bullet,\omega}$ ($\omega=\eps^d$, $0<d<1/2$), and then take $U^{{\mu},\rho,\sigma}_{\bullet, \omega}$ to be the unique moment-zero primitive in $\theta$ of $W^{{\mu},\rho,\sigma}_{\bullet,\omega}$.  For fixed $\omega$,
$U^{{\mu},\rho,\sigma}_{\bullet,\omega}$ is $C^\infty$ with compact support in $(t,x)$, and is rapidly decaying in $\theta$. 

\textbf{9.} Define our approximate solution to the original continuation/boundary problem by
\begin{align*}
u^\eps_\ma(t,x)=\sqrt{\eps}U^{{\mu},\rho,\sigma}_{\mr,\eps^d}(t,x,\theta)_{|\theta=\frac{\phi_r}{\eps}}+\sqrt{\eps}U^{{\mu},\rho,\sigma}_{\mi,\eps^d}(t,x,\theta)_{|\theta=\frac{\phi_i}{\eps}}
\end{align*}
We proceed to use the  Sakamoto estimate \eqref{sakamoto} to show that $\|u^\eps-u^\eps_\ma\|_{H^1(\Omega_T)}$ is small for appropriately chosen $\mu,\rho,\sigma,\eps$.

\subsubsection{Error analysis}
\emph{}

\textbf{10. }Recall that $u^\eps$ and $u^\eps_\ma$ are both defined on $\Omega_T$.   Writing 
$$Pu^\eps-Pu^\eps_\ma=(f(u^\eps)-f(u^\eps_\ma))+(f(u^\eps_\ma)-P(u^\eps_\ma))$$ and applying \eqref{sakamoto} we obtain for $-T<t \leq T$:
\begin{align*}%\label{ea1}
\begin{split}
&\|(u^\eps-u^\eps_\ma)(t)\|^2_{H^1(\overline{\bR^n_+})}+\|\partial_t(u^\eps-u^\eps_\ma)(t)\|^2_{L^2(\overline{\bR^n_+})}\\
&\qquad \lesssim \left[\int^t_{-T}\|(f(u^\eps)-f(u^\eps_\ma))(s)\|^2_{L^2(\overline{\bR^n_+})}\,\mathrm ds+\int^t_{-T}\|(f(u^\eps_\ma)-Pu^\eps_\ma))(s)\|^2_{L^2(\overline{\bR^n_+})}\,\mathrm ds\right]\\
&\qquad\qquad + \left[\langle u^\eps-u^\eps_\ma\rangle^2_{H^1(\Omega^\flat_{[-T,t]})}+\|(u^\eps-u^\eps_\ma)_{|t=-T}\|^2_{H^1(\overline{\bR^n_+})}+\|\partial_t(u^\eps-u^\eps_\ma)_{|t=-T}\|^2_{L^2(\overline{\bR^n_+})}\right]\\
&\qquad\qquad\qquad \eqqcolon [A+B]+[C+D+E].
\end{split}
\end{align*}
Using the Lipschitz property of $f$, we can apply Gronwall's inequality to absorb $A$ and obtain
\begin{align*}
\begin{split}
&\|(u^\eps-u^\eps_\ma)(t)\|^2_{H^1(\overline{\bR^n_+})}+\|\partial_t(u^\eps-u^\eps_\ma)(t)\|^2_{L^2(\overline{\bR^n_+})}\\
&\qquad \qquad \lesssim\int^t_{-T}\|(f(u^\eps_\ma)-Pu^\eps_\ma))(s)\|^2_{L^2(\overline{\bR^n_+})}\,\mathrm ds +\langle u^\eps-u^\eps_\ma\rangle^2_{H^1(\Omega^\flat_{T})}\\
&\qquad\qquad \qquad \qquad  + \|(u^\eps-u^\eps_\ma)_{|t=-T}\|^2_{H^1(\overline{\bR^n_+})}+\|\partial_t(u^\eps-u^\eps_\ma)_{|t=-T}\|^2_{L^2(\overline{\bR^n_+})}.
\end{split}
\end{align*}
Thus, it remains to estimate 
\begin{align*}
\|Pu^\eps_\ma-f(u^\eps_\ma)\|_{L^2(\Omega_T)}, \ \|(u^\eps-u^\eps_\ma)_{|t=-T}\|_{H^1(\overline{\bR^n_+})}, \ \|\partial_t(u^\eps-u^\eps_\ma)_{|t=-T}\|_{L^2(\overline{\bR^n_+})}, \text{ and }\langle u^\eps-u^\eps_\ma\rangle_{H^1(\Omega^\flat_{T})}.
\end{align*}

\textbf{11. Estimate $\|Pu^\eps_\ma-f(u^\eps_\ma)\|_{L^2(\Omega_T)}$.}   Let $\omega=\eps^d$, where $0<d<1/2$.  The profiles $U^{\mu,\rho,\sigma}_{\bullet,\omega}$, $\bullet=\mi, \mr$,  are regular enough so that the computation \eqref{b6b} is valid.   Thus we have
\begin{align}\label{eaa2}
\begin{split}
 Pu^\eps_\ma = & \frac{1}{\sqrt{\eps}}\left[T_{\phi_\mi} W^{\mu,\rho,\sigma}_{\mi,\omega}+(P_1\phi_\mi)W^{\mu,\rho,\sigma}_{\mi,\omega}\right]_{|\theta=\frac{\phi_\mi}{\eps}}+\sqrt{\eps}(PU^{\mu,\rho,\sigma}_{\mi,\omega})_{|\theta=\frac{\phi_\mi}{\eps}}\\
 & \qquad +\frac{1}{\sqrt{\eps}}\left[T_{\phi_\mr} W^{\mu,\rho,\sigma}_{\mr,\omega}+(P_1\phi_\mr)W^{\mu,\rho,\sigma}_{\mr,\omega}\right]_{|\theta=\frac{\phi_r}{\eps}}+\sqrt{\eps}(PU^{\mu,\rho,\sigma}_{\mr,\omega})_{|\theta=\frac{\phi_\mr}{\eps}}.
    \end{split}
\end{align}
Since $PU^{\mu,\rho,\sigma}_{\mi,\omega}=\left(PW^{\mu,\rho,\sigma}_{\mi,\omega}\right)^*$ and $PW^{\mu,\rho,\sigma}_{\mi,\omega}\in H^1(\Omega_T\times\bR_\theta)$, using Proposition \ref{a10z} we obtain
\begin{align}\label{eab2}
\begin{split}
& \sqrt{\eps}\left\|(PU^{\mu,\rho,\sigma}_{\mi,\omega})_{|\theta=\frac{\phi_\mi}{\eps}}\right\|_{L^2(\Omega_T)} =\sqrt{\eps}\left\|\left(PW^{\mu,\rho,\sigma}_{\mi,\omega}\right)^*_{|\theta=\frac{\phi_i}{\eps}}\right\|_{L^2(\Omega_T)} \leq \sqrt{\eps}\left\|\left(PW^{\mu,\rho,\sigma}_{\mi,\omega}\right)^*\right\|_{H^1(\Omega_T\times\bR_\theta)} \\
& \qquad \qquad \leq\sqrt{\eps}\frac{\|PW^{\mu,\rho,\sigma}_{\mi,\omega}\|_{H^1(\Omega_T\times\bR_\theta)}}{\omega} \leq \sqrt{\eps}\frac{\|PW^{\mu,\rho,\sigma}_\mi\|_{H^1(\Omega_T\times\bR_\theta)}}{\eps^d}=O(\eps^{\frac{1}{2}-d})
\end{split}
\end{align}
for fixed $\rho$.
Here we used Proposition \ref{crude} for the first inequality and Proposition \ref{a10z} for the other two inequalities.   The last term in \eqref{eaa} is estimated the same way.

Next consider $f(u^\eps_\ma).$ Define $A$ and $r_\eps$ by 
\begin{align}\label{eac2}
\begin{split}
f(u^\eps_\ma)= & f\left(t,x,\sqrt{\eps}\sum_{\bullet=\mi,\mr}U^{\mu,\rho,\sigma}_{\bullet,\omega},\sum_{\bullet = \mi, \mr} \left(\sqrt{\eps}\nabla_{t,x} U^{\mu,\rho,\sigma}_{\bullet,\omega}+\epsilon^{-\frac12}W^{\mu,\rho,\sigma}_{\bullet,\omega}\nabla_{t,x} \phi_\bullet \right)\right)_{|\theta_i=\frac{\phi_i}{\eps},\theta_r=\frac{\phi_r}{\eps}}\\
= & f\left(t,x,0,\epsilon^{-\frac12}W^{\mu,\rho,\sigma}_{\mi,\omega}\nabla_{t,x} \phi_\mi+\epsilon^{-\frac12}W^{\mu,\rho,\sigma}_{\mr,\omega}\nabla_{t,x} \phi_\mr\right)+r_\eps \eqqcolon A+r_\eps.
\end{split}
\end{align}
Since $f$ is Lipschitz, we have as $\epsilon\to 0$
\begin{align}\label{ead2}
\|r_\eps\|_{L^2(\Omega_T)}\lesssim \sqrt{\eps}\|U^{\mu,\rho,\sigma}_{\mi,\omega}+U^{\mu,\rho,\sigma}_{\mr,\omega}\|_{L^2(\Omega_T)}+\sqrt{\eps}\|\nabla_{t,x} U^{\mu,\rho,\sigma}_{\mi,\omega}+\nabla_{t,x} U^{r,\rho,\sigma}_{\mr,\omega}\|_{L^2(\Omega_T)}\to 0 
\end{align}
by an estimate just like \eqref{eab2}.   Using Notation \ref{notza} we can write
\begin{align}\label{ead3}
A=f\left(\epsilon^{-\frac12}W^{\mu,\rho,\sigma}_{\mr,\omega},0\right)+f\left(0,\epsilon^{-\frac12}W^{\mu,\rho,\sigma}_{\mi,\omega}\right)+f_{\mathrm{nc}}\left(\epsilon^{-\frac12}W^{\mu,\rho,\sigma}_{\mr,\omega},\epsilon^{-\frac12}W^{\mu,\rho,\sigma}_{\mi,\omega}\right),
\end{align}
where by Lemma \ref{nc}:
\begin{align}\label{ead4}
\left\|f_{\mathrm{nc}}\left(\epsilon^{-\frac12}W^{\mu,\rho,\sigma}_{\mr,\omega},\epsilon^{-\frac12}W^{\mu,\rho,\sigma}_{\mi,\omega}\right)\right\|_{L^2(\Omega_T)}=o_\eps(1).
\end{align}

From \eqref{eaa2}--\eqref{ead4} we see that $\|Pu^\eps_\ma-f(u^\eps_\ma)\|_{L^2(\Omega_T)}\leq B_\mi+B_\mr$, where\footnote{The term $B_\mr$ is the obvious analogue of $B_\mi$.}
\begin{align*}%\label{eae2}
    \begin{split}
        B_\mi & = \left\|\epsilon^{-\frac12}\left(T_{\phi_\mi} W^{\mu,\rho,\sigma}_{\mi,\omega}+(P_1\phi_\mi)W^{\mu,\rho,\sigma}_{\mi,\omega}-\sqrt{\eps}f\left(0,\epsilon^{-\frac12}W^{\mu,\rho,\sigma}_{\mi,\omega}\right)\right)_{|\theta=\phi_\mi/\eps}\right\|_{L^2(\Omega_T)}+o_\eps(1)\\
        & \lesssim\left\|\epsilon^{-\frac12}\left(T_{\phi_\mi} W^{\mu,\rho,\sigma}_{\mi,\omega}+(P_1\phi_\mi)W^{\mu,\rho,\sigma}_{\mi,\omega}-\sqrt{\eps}f\left(0,\epsilon^{-\frac12}W^\mu_\mi\right)\right)\right\|_{L^2(\Omega_T)}\\
        &\qquad\qquad \qquad \qquad \qquad \qquad+ \left\|f\left(0,\epsilon^{-\frac12}W^\mu_\mi\right)-f\left(0,\epsilon^{-\frac12}W^{\mu,\rho,\sigma}_{\mi,\omega}\right)\right\|_{L^2(\Omega_T)}+o_\eps(1)\\
        &\lesssim\left\|T_{\phi_\mi} W^{\mu,\rho,\sigma}_{\mi,\omega}+(P_1\phi_\mi)W^{\mu,\rho,\sigma}_{\mi,\omega}-\sqrt{\eps}f\left(0,\epsilon^{-\frac12}W^\mu_\mi\right)\right\|_{H^1(\Omega_T\times\bR)}\\
        & \qquad \qquad \qquad \qquad \qquad \qquad \qquad \qquad \qquad \quad \,\, +\epsilon^{-\frac12}\left\|W^\mu_\mi-W^{\mu,\rho,\sigma}_{\mi,\omega}\right\|_{L^2(\Omega_T)}+o_\eps(1)\\
        & \lesssim (o_\rho(1)+o_\sigma(1)+o_\omega(1))+\left\|W^\mu_\mi-W^{\mu,\rho,\sigma}_{\mi,\omega}\right\|_{H^1(\Omega_T\times\bR)}+o_\eps(1)\\
        & =o_\rho(1)+o_\sigma(1)+o_\eps(1).
    \end{split}
\end{align*}
Here we used \eqref{a11} and the Lipschitz property of $f$ to obtain the fourth line.  We used \eqref{nl7z} and \eqref{a11}, along with Friedrichs's lemma and Proposition \ref{a10z}  to get the last line.   The estimate of $B_\mr$ is done the same way.  Summarizing, we have shown
\begin{align*}%\label{eae3}
\|Pu^\eps_\ma-f(u^\eps_\ma)\|_{L^2(\Omega_T)}=o_\rho(1)+o_\sigma(1)+o_\eps(1).
\end{align*}

\textbf{12. Estimate $\|(u^\eps-u^\eps_\ma)_{|t=-T}\|_{H^1(\overline{\bR^n_+})}$.}   At $t=-T$ with $V_0=V_0(x,\phi_0/\eps)$ we have 
\begin{align*}%\label{ea4}
u^\eps-u^\eps_\ma=(g^\eps-\sqrt{\eps}V_0)+(\sqrt{\eps} V_0-\sqrt{\eps}U^{\mu,\rho,\sigma}_{\mi,\eps^d})\eqqcolon A+B.  
\end{align*}
Recall that $\|A\|_{H^1(\overline{\bR^n_+})}=o_\eps(1)$ by assumption.  The contribution to $\|B\|_{H^1(\overline{\bR^n_+})}$ whose estimate is least obvious is, with $\omega=\eps^d$:\footnote{Here $W^{\mu,\rho_{x',\theta},\sigma}_\mi$ is the regularization of $W^\mu_\mi$  in $(x,
\theta)$; we set $f^{\rho_{x',\theta}} \coloneqq (R^{\rho_{x'}}\circ R^{\rho_\theta})f$.   Observe that $W^{\mu,\rho_{x',\theta},\sigma,\rho_t}_\mi=W^{\mu,\rho,\sigma}_\mi$.}
\begin{align*}%\label{ea5}
\begin{split}
&\epsilon^{-\frac12}\|W_\mi(-T,x,\phi_0/\eps)\nabla \phi_0-W^{\mu,\rho,\sigma}_{\mi,\omega}\nabla\phi_0\|_{L^2(\overline{\bR^n_+})}\\
& \qquad   \leq\epsilon^{-\frac12}\|W_\mi\nabla \phi_0-W^{\mu,\rho_{x',\theta},\sigma}_{\mi}\nabla \phi_0\|_{L^2(\overline{\bR^n_+})}\\
& \qquad \qquad  +\epsilon^{-\frac12}\|W^{\mu,\rho_{x',\theta},\sigma}_\mi\nabla \phi_0-W^{\mu,\rho_{x',\theta},\sigma,\rho_t}_{\mi}\nabla \phi_0\|_{L^2(\overline{\bR^n_+})}\\
&\qquad \qquad \qquad  +\epsilon^{-\frac12}\|W^{\mu,\rho_{x',\theta},\sigma,\rho_t}_\mi\nabla \phi_0-W^{\mu,\rho_{x',\theta},\sigma,\rho_t}_{\mi,\omega}\nabla \phi_0\|_{L^2(\overline{\bR^n_+})}\\
& \qquad \qquad \qquad \qquad \eqqcolon C+D+E.
\end{split}
\end{align*}
We have
\begin{align*}%\label{ea5y}
\begin{split}
C & \lesssim \epsilon^{-\frac12} \|W_\mi-W_\mi^\mu\|_{L^2(\overline{\bR^n_+})}+\epsilon^{-\frac12} \|W_\mi^\mu-W_\mi^{\mu,\rho_{x',\theta},\sigma}\|_{L^2(\overline{\bR^n_+})}\\
&= \left(\left\|\frac{a-a^\mu}{|\nabla\phi_0|^{1/2}}\right\|_{L^2(\cS_{-T}\times \bR_\theta)}+o_\eps(1)\right)+\epsilon^{-\frac12}\|W_\mi^\mu-W_\mi^{\mu,\rho_{x',\theta},\sigma}\|_{L^2(\overline{\bR^n_+})} \eqqcolon C_1+C_2,
\end{split}
\end{align*}
where we used  Proposition \ref{a20} and Remark \ref{a25} for the first equality.  The choice of $a^\mu$ then implies that $C_1=o_\mu(1)+o_\eps(1)$.  By Remark \ref{a11kz} and Friedrichs's lemma, we have $C_2=o_{\rho_{x',\theta}}(1)+o_\sigma(1)$.
Thus,
\begin{align*}%\label{ea5x}
C= o_\mu(1)+o_\rho(1)+o_\sigma(1)+o_\eps(1).
\end{align*}

Next consider $D$, where  $W^{\mu,\rho_{x',\theta},\sigma,\rho_t}_{\mi}(-T,x,\theta)$ is obtained by first regularizing $W^{\mu,\rho_{x',\theta},\sigma}_{\mi}$ in $t$ and then restricting to $t=-T$.   Remark \ref{tl3} implies that
$$\|W_\mi^{\mu,\rho_{x',\theta},\sigma}-W^{\mu,\rho_{x',\theta},\sigma,\rho_t}_{\mi}\|_{H^1(\overline{\bR^n_+}\times \bR_\theta)}=o_{\rho_t}(1) \ \text{ at } \ t=-T,$$
since $W_\mi^{\mu,\rho_{x',\theta},\sigma}(t,x,\theta)\in H^1([-T,T],H^\infty(\overline{\bR^n_+}\times \bR_\theta))$.   Thus, $D=o_{\rho_t}(1)$ by Remark  \ref{a11kz}.   By Remark \ref{a11kz} again and Proposition \ref{a10z}(ii) we have $E=o_\eps(1)$.

To complete the estimate of $\|(u^\eps-u^\eps_\ma)_{|t=-T}\|_{H^1(\overline{\bR^n_+})}$ we must estimate the contribution to 
$\|B\|_{H^1(\overline{\bR^n_+})}$ given by 
\begin{align*}%\label{ea6}
\|\partial_{x}^\alpha(\sqrt{\eps} V_0-\sqrt{\eps}U^{\mu,\rho,\sigma}_{\mi,\eps^d})(-T,x,\theta)_{|\theta=\phi_0/\eps}\|_{L^2(\overline{\bR^n_+})} \ \text{ for } \ |\alpha|\leq 1.
\end{align*}
For each $\alpha$ the two terms inside $\|\cdot\|$ can be estimated separately using Proposition \ref{crude} and shown to be $o_\eps(1)$ in $L^2(\overline{\bR^n_+})$.  Summarizing we have shown
\begin{align*}
\|(u^\eps-u^\eps_\ma)_{|t=-T}\|_{H^1(\overline{\bR^n_+})}=o_\mu(1)+o_{\rho}(1)+o_\sigma(1)+o_\eps(1).
\end{align*}

\textbf{13. Estimate $\|\partial_t(u^\eps-u^\eps_\ma)_{|t=-T}\|_{L^2(\overline{\bR^n_+})}$.} At $t=-T$ with 
\begin{align}\label{ea7}
V_1=(\partial_\theta V_0)\partial_t\phi_\mi=W_\mi\partial_t\phi_\mi=a\partial_t\phi_\mi,
\end{align}
 we have 
\begin{align*}%\label{ea8}
\partial_t(u^\eps-u^\eps_\ma)=\left(h^\eps-\epsilon^{-\frac12}V_1(x,\phi_0/\eps)\right)+\left(\epsilon^{-\frac12} V_1-\partial_t(\sqrt{\eps}U^{\mu,\rho,\sigma}_{\mi,\eps^d})\right) \eqqcolon A+B.  
\end{align*}
We have $\|A\|_{L^2(\overline{\bR^n_+})}=o_\eps(1)$ by assumption.   Using \eqref{ea7} and  the argument of step \textbf{12}, we find
$$\|B\|_{L^2(\overline{\bR^n_+})}=o_\mu(1)+o_{\rho}(1)+o_\sigma(1)+o_\eps(1),$$
therefore
\begin{align*}
\|\partial_t(u^\eps-u^\eps_\ma)_{|t=-T}\|_{L^2(\overline{\bR^n_+})}=o_\mu(1)+o_{\rho}(1)+o_\sigma(1)+o_\eps(1).
\end{align*}

\textbf{14. Estimate $\langle u^\eps-u^\eps_\ma\rangle_{H^1(\Omega^\flat_{T})}$.} On $\Omega^\flat_T$ we have 
$u^\eps=0$.  The main contribution to $\langle u^\eps_\ma\rangle_{H^1(\Omega^\flat_{T})}$ is 
\begin{align*}
\left\langle \epsilon^{-\frac12} \left(W^{\mu,\rho,\sigma}_{\mr,\eps^d}+W^{\mu,\rho,\sigma}_{\mi,\eps^d}\right)\nabla_{t,x'}\phi_\mi\right\rangle_{L^2(\Omega^\flat_T)} \eqqcolon A.
\end{align*}
Here we used $\phi_\mr=\phi_\mi$ on $x_1=0$.  By Remark \ref{a11kz} and \eqref{nl7b}:
\begin{align*}
A\lesssim \left\|W^{\mu,\rho,\sigma}_{\mr,\eps^d}+W^{\mu,\rho,\sigma}_{\mi,\eps^d}\right\|_{H^1(\Omega^\flat_T\times \bR_\theta)}\
\leq \left\|W^{\mu,\rho,\sigma}_{\mr}+W^{\mu,\rho,\sigma}_{\mi}\right\|_{H^1(\Omega^\flat_T\times \bR_\theta)}=o_\sigma(1).
\end{align*}
Proposition \ref{crude} implies that the other contributions to $\langle u^\eps_\ma\rangle_{H^1(\Omega^\flat_{T})}$ are $o_\eps(1)$, so 
\begin{align*}
\langle u^\eps-u^\eps_\ma\rangle_{H^1(\Omega^\flat_{T})}=o_\sigma(1)+o_\eps(1).
\end{align*}

\textbf{15. Choice of $W^\ell_r$, $W^\ell_i$.} The estimates of steps \textbf{9-13} show that 
\begin{align*}
\|u^\eps-u^\eps_\ma\|_{L^2(\Omega_T)}=o_\mu(1)+o_{\rho}(1)+o_\sigma(1)+o_\eps(1).
\end{align*}
 Take $\delta_\ell\searrow 0$ and set $W^\ell_\bullet\coloneqq W^{{\mu(\ell)}, \rho(\ell),\sigma(\ell)}_\bullet$, $\bullet=\mr, \mi$.   
For each  $\ell$ use the above error estimates to choose $\mu(\ell)$, then $\rho(\ell)$, then $\sigma(\ell)$, and finally $\eps_\ell$ so that \eqref{nl10} and \eqref{nl11} hold.

\begin{rem}
\textup{ We saw in \eqref{ead4} that the nonlinear interaction term  $f_{\mathrm{nc}}\left(\epsilon^{-\frac12}W^{{\mu},\rho,\sigma}_{\mr,\omega},\epsilon^{-\frac12}W^{{\mu},\rho,\sigma}_{\mi,\omega}\right)$ is negligible since it is  $o_\eps(1)$ in $L^2(\Omega_T)$.  In the wavetrain case the interaction term was of size $\sim 1$ and had to be solved away by including an additional corrector term in the formula for $u^\eps_\ma$.   See (7.13) of \cite{wangwil}.}

\end{rem}

\section{Verification of Assumption \ref{exv} on the reflected phase}\label{rphase}

In this section we verify that Assumption \ref{exv} is satisfied when the operator $P$ is the wave operator acting in the exterior of a convex obstacle.

\subsection{A model case}
Consider   the wave operator $\Box \coloneqq -\partial_t^2 + \Delta$ acting  in the exterior region $\{x\in \RR^2 \mid x_1>-x_2^2\}$. We make a spatial change of variable 
\begin{align}\notag
\begin{split}
y_1 = x_1+x_2^2, \ \  y_2=G(x)
\end{split}
\end{align}
and $G$ is to be chosen below.
Then 
\[ \eta_1\mathrm d y_1+\eta_2 \mathrm d y_2 = \eta_1(\mathrm d x_1+2x_2\mathrm d x_2)+\eta_2(G_{x_1}\mathrm d x_1 + G_{x_2}\mathrm d x_2) = (\eta_1 + G_{x_1}\eta_2)\mathrm dx_1 + (2x_2\eta_1+G_{x_2} \eta_2)\mathrm d x_2. \]
Hence using coordinates $(t,y,\tau,\eta)$, the principal symbol of $\Box$ reads
\[ -\tau^2+(\eta_1+G_{x_1}\eta_2)^2 + (2x_2\eta_1+G_{x_2} \eta_2)^2. \]
Assuming that $G$ solves 
\[ (\partial_{x_1}+2x_2\partial_{x_2})G(x)=0 \ \text{ for } x_1>-x_2^2, \ \text{ and } \ G(-x_2^2,x_2)=x_2, \]
then the mixed terms in the principal symbol vanish, and we have 
\[ \sigma(\Box) = -\tau^2 + (1+4x_2^2)\eta_1^2 + (G_{x_1}^2+G_{x_2}^2)\eta_2^2. \]
Now $\frac{\sigma(\Box)}{1+4x_2^2}$ is of the form \eqref{d1}.
The method of characteristics says that $G$ can be solved by putting 
\[ G(s-z^2, z e^{2s}) = z, \ \ z\in \mathbb R. \]
According to \eqref{e4yb}, to show $\partial_{y_1}\phi_\mr\neq 0$ for all $(t,y)\in \mathring{J}_\mr$, it suffices to show $\eta_1(Z_r^{-1}(t,y))\neq 0$. Equivalently, one only needs to show 
\[ \eta_1(s,t,y_2)\neq 0, \ \text{for all } \ (s,t, y_2)\in [0, s_0)\times \mathring{V}_\mr. \]
By definition, we know $\eta_1(0,t, y_2)=-\partial_{y_1}\phi_\mi(0,t, y_2)>0$. Notice also that $\eta_1$ solves the equation
\[ \frac{\mathrm d \eta_1}{\mathrm d s} = -\partial_{y_1}(1+4x_2^2)\eta_1^2 -\partial_{y_1}(G_{x_1}^2+G_{x_2}^2)\eta_2^2 = -\partial_{y_1}(1+4x_2^2)\eta_1^2-\partial_{y_1}( (1+4x_2^2)G_{x_2}^2 ) \eta_2^2. \]
Notice that 
\[ \partial_{y_1} = -\frac{ G_{x_1}\partial_{x_2} - G_{x_2}\partial_{x_1} }{ (1+4x_2^2)G_{x_2} } = \frac{ \partial_{x_1} + 2x_2\partial_{x_2} }{1+4x_2^2}. \]
This implies that 
\[\begin{split} 
\partial_{y_1}(1+4x_2^2) & = \frac{16x_2^2}{1+4x_2^2}, \\
\partial_{y_1}( (1+4x_2^2)G_{x_2}^2 ) & = \frac{(\partial_{x_1}+2x_2\partial_{x_2})( (1+4x_2^2)G_{x_2}^2 )}{1+4x_2^2} \\
& = \frac{ 16x_2^2G_{x_2}^2 +2(1+4x_2^2)G_{x_2}( G_{x_1x_2} + 2x_2 G_{x_2x_2} ) }{1+4x_2^2} = -\frac{4 G_{x_2}^2}{1+4x_2^2}. 
\end{split}\]
Therefore
\[ \frac{\mathrm d \eta_1}{\mathrm ds} + \frac{16 x_2^2}{1+4x_2^2} \eta_1^2 = \frac{4G_{x_2}^2}{1+4x_2^2}\eta_2^2\geq 0 \ \text{ for all } \ (s,t, y_2)\in [0, s_0)\times \mathring{V}_\mr. \]
Denote $\varpi(s)\coloneqq  \frac{16 x_2^2}{1+4x_2^2}$, then $0\leq \varpi\leq 4$ for all $s$. 
We claim that  $\eta_1\neq 0$ for all $s\in [0, s_0)$. In fact, if this is not true for some fixed $(t,y_2)$, then there exists a minimal $s_*\in (0,s_0)$ such that $\eta_1(s_*)=0$ while $\eta_1(s)> 0$ for $s\in (0, s_*)$. Thus for any $0<\eps<s_*$, we have  
\[ \frac{1}{\eta_1^2}\frac{\mathrm d\eta_1}{\mathrm d s} + \varpi(s)\geq 0, \ s\in [0, s_*-\epsilon]. \]
Integrate this inequality in $s\in [0, s_*-\epsilon]$ and we obtain 
\[\begin{split} 
& -\frac{1}{\eta_1(s_*-\epsilon)}+\frac{1}{\eta_1(0)} + \int_0^{s_*-\epsilon}\varpi(s)\mathrm ds \geq 0 \ \Rightarrow \ \frac{1}{\eta_1(s_*-\epsilon)}\leq \frac{1}{\eta_1(0)} + 4s_*<\infty.
\end{split}\]
Taking $\epsilon\to 0+$ yields contradiction as $\lim_{\epsilon\to 0+}\frac{1}{\eta_1(s_*-\epsilon)}=+\infty$. This shows that $s_*$ does not exist. Hence $\eta_1\neq 0$ on $[0, s_0)$.

Since the Hamiltonian flows of $\sigma(\Box)$ and $\frac{\sigma(\Box)}{1+4x_2^2}$ are the same up to reparametrization, we conclude that $\partial_{y_1}\phi_\mr\neq 0$ on $\mathring J_\mr$.

\subsection{General convex obstacles}
More  generally, consider the wave operator $\Box \coloneqq -\partial_t^2+\Delta$ acting in the exterior region $\RR^n\setminus \overline{\mathcal O}$ for $\mathcal O\coloneqq\{x\in \RR^n \mid x_1<-F(x'), \  x'\in \mathbb R^{n-1}\}$, where  $F$ is convex. We introduce the spatial change of variable 
\begin{align}\notag
\begin{split}
&y_1 = x_1+F(x'),\  \ y' = G(x),
\end{split}
\end{align}
where $G\in C^{\infty}(\mathbb R^n\setminus \overline{\mathcal O}; \mathbb R^{n-1})$ and satisfies the equation 
\begin{align}\label{gc1}
 G_{x_1}^\top + \snabla F ( \snabla G)^\top =0, \ G(-F(x'),x')=x'.
 \end{align}
Here $\snabla$ is the gradient in $x'$ and is regarded as a row vector. All other vectors in this section, when written without superscript $\top$,  are regarded as column vectors.   The system \eqref{gc1} is a decoupled system of transport equations for the components of $G$, so can be solved by integrating along characteristics.

In the new coordinates the principal symbol of $\Box$ is\footnote{Here we write $\langle a,b\rangle$ for the dot product of two column vectors.   We will write $a\cdot b$ for the dot product of two row vectors.}
\[\begin{split} 
\sigma(\Box) & = -\tau^2+(1+|\snabla F|^2)\eta_1^2 + |\langle G_{x_1}, \eta' \rangle|^2+ |(\snabla G)^\top\eta'|^2 \\
& = (1+|\snabla F|^2)\left( \eta_1^2 + \frac{|\langle G_{x_1}, \eta' \rangle|^2 + |(\snabla G)^\top \eta'|^2-\tau^2}{1+|\snabla F|^2} \right).
\end{split}\]
Thus 
\[ p(t,y,\tau,\eta) \coloneqq \eta_1^2 + \frac{|\langle G_{x_1}, \eta' \rangle|^2 +|(\snabla G)^\top\eta'|^2-\tau'^2}{1+|\snabla F|^2},  \]
which has the same form of \eqref{d1}. Since the flow of $p$ and the flow of $\sigma(\Box)$ are the same up to reparametrization, it suffices to show $\eta_1$ for $\sigma(\Box)$ does not vanish. Notice
\[ \frac{\mathrm d \eta_1}{\mathrm d s} = -\partial_{y_1}(1+|\snabla F|^2)\eta_1^2 -\partial_{y_1}\left( |\langle G_{x_1}, \eta' \rangle|^2+ |(\snabla G)^\top \eta'|^2 \right).\]
Notice that by the change of coordinates,
\[\begin{split} 
\partial_{x_1} & = \partial_{y_1}+  ( G_{x_1})^\top\cdot \snabla_{y'} , \\
\snabla_{x'} & = (\snabla_{x'} F)\partial_{y_1} + ( (\snabla_{x'} G)^\top \snabla_{y'}^\top )^\top.
\end{split}\]
Taking inner product of the second identity and $\slashed\nabla F$:
\[ \snabla F\cdot \snabla_{x'} = (\snabla F\cdot  \snabla F)\partial_{y_1} + \snabla F (\snabla G)^\top \cdot\snabla_{y'}. \]
Summing this with the formula for  $\partial_{x_1}$, we find 
\[ \partial_{x_1}+\snabla F\cdot \slashed\nabla_{x'} = (1+|\snabla F|^2)\partial_{y_1} + \left((G_{x_1})^\top +\snabla F(\snabla G)^\top \right)\cdot \snabla_{y'} = (1+|\snabla F|^2)\partial_{y_1} \]
using the defining equation for $G$. Therefore 
\[ \partial_{y_1} = \frac{ \partial_{x_1} + \snabla F\cdot \snabla }{1+|\snabla F|^2}. \]

Now we can write 
\[ \frac{\mathrm d \eta_1}{\mathrm d s} = -\frac{ (\partial_{x_1}+\snabla F\cdot \snabla)(1+|\snabla F|^2)\eta_1^2  }{1+|\snabla F|^2} + \frac{ \langle \eta', \mathcal A \eta' \rangle }{1+|\snabla F|^2} \]
where 
\[ \mathcal A \coloneqq -(\partial_{x_1}+\snabla F \cdot \snabla)\left( \snabla G (I+(\snabla F)^\top \snabla F) (\snabla G)^\top \right). \]

We first compute
\[\begin{split} 
(\partial_{x_1}+\snabla F \cdot \snabla)(1+|\snabla F|^2) & = \snabla F\cdot \snabla (|\snabla F|^2) \\
& = \sum_{k,\ell} F_k(|F_\ell|^2)_k = 2\sum_{k,\ell}F_k F_\ell F_{k\ell} = 2\snabla F(\snabla^2 F)(\snabla F)^\top. 
\end{split}\]
where the subscripts $2\leq k, \ell\leq n$ are partial derivatives with respect to $x_k, x_\ell$. 

Our next aim is to show $\mathcal A \geq 0$ remembering $G_{x_1} + (\snabla G)(\snabla F)^\top =0$. We compute 
\[\begin{split}
    & (\partial_{x_1}+\snabla F\cdot \snabla)\snabla G = (\snabla F\cdot \snabla )\snabla G - \snabla (\snabla F\cdot \snabla G) = -(\snabla G)(\snabla^2 F), \\
    & (\partial_{x_1}+\snabla F\cdot \snabla)(\snabla G)^\top = (\snabla F\cdot \snabla )(\snabla G)^\top - ( \snabla(\snabla F\cdot \snabla G) )^\top = -(\snabla^2F)(\snabla G)^\top.
\end{split}\]
Therefore
\[ \mathcal A = \snabla G\left[ 2\snabla^2 F + \underbrace{\snabla^2F (\snabla F)^\top \snabla F+(\snabla F)^\top \snabla F \snabla^2 F - \snabla F\cdot \snabla ( (\snabla F)^\top \snabla F )}_{=0} \right] (\snabla G)^\top. \]
The underbraced part can be justified by looking at the $(i,j)$-th element of the matrix:
\[ \sum_k F_{ik}F_j F_k + \sum_k F_{jk}F_i F_k - \sum_k F_k (F_i F_j)_k =0. \]
We conclude that 
\[ \mathcal A = 2 (\snabla G)(\snabla^2 F)(\snabla G)^\top \]
is symmetric and $\mathcal A\geq 0$.

As a result 
\[ \frac{\mathrm d \eta_1}{\mathrm d s} + \frac{ 2\snabla F (\snabla^2 F)(\snabla F)^\top }{1+|\snabla F|^2} \eta_1^2\geq 0, \ \text{ for all } s\in [0, s_0) \]
with $\eta_1(0,t,y')>0$ by definition. For fixed $(t,y')$, let us denote 
\[ \varpi(s)\coloneqq \frac{ 2\snabla F (\snabla^2 F)(\snabla F)^\top }{1+|\snabla F|^2}, \]
and let $C>0$ be a constant that bounds the eigenvalues of $\snabla^2 F$. Then 
\[ 0\leq \varpi(s)\leq 2C \ \text{ for all } \ s \]
and 
\[ \frac{\mathrm d \eta_1}{\mathrm d s} + \varpi(s) \eta_1^2\geq 0 \  \text{ for all } \ s\in [0, s_0). \]
We can now run the same argument as in the model case to show $\eta_1\neq 0$ for all $s\in [0, s_0)$. In fact, if this is not true, then there exists a minimal $s_*\in (0,s_0)$ such that $\eta_1(s_*)=0$ while $\eta_1(s)> 0$ for $s\in (0, s_*)$. Thus for any $\epsilon>0$, we have
\[ \frac{1}{\eta_1^2}\frac{\mathrm d \eta_1}{\mathrm d s} + \varpi(s)\geq 0, \ \ s\in (0, s_*-\epsilon). \]
Integrate this inequality for $s\in (0, s_*-\epsilon)$ and we find 
\[\begin{split}
    & -\frac{1}{\eta_1(s_*-\epsilon)} + \frac{1}{\eta_1(0)} + \int_0^{s_*-\epsilon} \varpi(s)\mathrm ds \geq 0 \\
    & \qquad \qquad \Rightarrow \frac{1}{\eta_1(s_*-\epsilon)}\leq \frac{1}{\eta_1(0)} + \int_0^{s_*} \varpi(s)\mathrm ds\leq \frac{1}{\eta_1(0)} + 2Cs_*.
\end{split}\]
This contradicts the assumption that $\lim_{\epsilon\to 0+}\frac{1}{\eta_1(s_*-\epsilon)}=+\infty$.

We have now proved $\eta_1(s,t,y')>0$ on $[0,s_0)\times \mathring V_\mr$. This in turn shows 
\[ \partial_{y_1}\phi_\mr>0  \ \text{ in } \ \mathring J_\mr \]
considering \eqref{e4yb}.

\appendix

\section{Moment-zero approximations}\label{mz}
 Moment zero approximations are introduced to deal with the difficulty presented by the fact that primitives in $\theta$ of functions that decay as $|\theta|\to \infty$ do not themselves necessarily decay at all as $|\theta|\to \infty$; see Remark \ref{rmza}.\footnote{We first saw moment-zero approximations used in the context of pulses in \cite{altermanrauch}.}
 
 \begin{defn}\label{mz1}
 If $\sigma(t,x,\theta)$ is integrable in $\theta$, we call $\int^\infty_{-\infty}\sigma(t,x,\theta)\,\mathrm d\theta=(2\pi)^{1/2}\hat\sigma(t,x,0)$, the zeroth moment, or just \emph{the moment}, of $\sigma$.  Here $\hat\sigma(t,x,k)=\frac{1}{2\pi}\int e^{-ik\theta}\sigma(t,x,\theta)\,\mathrm d\theta.$
 \end{defn}

This motivates the following definitions.  
\begin{defn}\label{mz1a}
If $u(\theta)$ is a tempered distribution (that is, $u\in\cS'(\bR)$),  we say $u$ has \emph{moment zero} if $\hat u(k)$ vanishes in a neighborhood of $k=0$.  
\end{defn}

When $u\in \cS'(\bR)$ has moment zero, we can define its unique moment zero primitive $u^*\in \cS'(\bR)$ by 
$\widehat{u^*}=\frac{\hat u}{ik}$.

\begin{defn}[Moment zero approximations]\label{mz2}

For $\omega >0$ let $\chi_\omega(k)\in C^\infty(\bR)$  be such that $0\leq \chi_\omega\leq 1$ and 
$$\chi_\omega(k)=
\begin{cases}0 &\text{ on }|k|\leq \omega, \\ 
1 &\text{ on }|k|\geq 2\omega.
\end{cases}$$
\begin{enumerate}
    \item[(i)] For $u\in\cS'(\bR)$ define the moment zero approximation $u_\omega\in \cS'(\bR)$ of $u$  by $\widehat{u_\omega}=\chi_\omega \hat u$.

    \item[(ii)] Suppose $\sigma(t,x,\theta)\in H^s(\Omega_T\times \bR)$ for some $s\in \bN_0$.
\end{enumerate}   
Define the \emph{moment-zero approximation} $\sigma_\omega(t,x,\theta)$ by 
\begin{align*}
\hat\sigma_\omega(t,x,k) \coloneqq \chi_\omega(k)\hat\sigma(t,x,k).
\end{align*}
\end{defn}

\begin{rem}\label{rmza}
(i) The function $\chi_\omega\to 1$ pointwisely a.e. on $\bR$ as $\omega\to 0$.

(ii) If $u\in\cS'(\bR)$, then the unique moment zero primitive of $u_\omega$, $u_\omega^*\in\cS'(\bR)$, is given by  $\widehat{u_\omega^*}=\chi_\omega(k)\frac{\hat{u}}{ik}$.    Moreover if $v=\partial_\theta u=\partial_\theta \tilde u$ for some $u, \tilde u\in \cS'(\bR)$, we have 
\begin{align*}
u_\omega=\tilde u_\omega=v_\omega^*.
\end{align*}

(iii) If $\sigma\in H^s(\Omega_T\times \bR)$, where $s\geq 0$, then a primitive in $\theta$ of $\sigma$ need not lie in $L^2(\Omega_T\times \bR)$.   However, the unique moment-zero primitive in $\theta$ of $\sigma_\omega$, $\sigma^*_\omega$,  is given by 
\begin{align}\label{mzz}
\widehat{\sigma^*_\omega}(t,x,k) \coloneqq \chi_\omega(k)\frac{\hat{\sigma}(t,x,k)}{ik}
\end{align}
and satisfies  Proposition \ref{a10z}(iii) below.

(iv) 
If $\sigma$ is smooth in $\theta$ and compactly supported in $\theta$,   
then $\sigma_\omega(t,x,\theta)$ and $\sigma^*_\omega(t,x,\theta)$ are smooth and rapidly decaying in $\theta$.\footnote{The latter applies to 
the $W^{\rho,\sigma}_\omega$ in \S \ref{npfs} and the $W^{\mu,\rho,\sigma}_{\bullet,\omega}$, $\bullet=\mi, \mr$, in \S \ref{diff}. }

\end{rem}

The following proposition  is used repeatedly in \S\S \ref{npfs}--\ref{diff}.

\begin{prop}\label{a10z}
Suppose $\sigma(t,x,\theta)\in H^s(\Omega_T\times \bR)$ for some $s\in \bN_0$ and let $\omega>0$.   Then
\begin{enumerate}
    \item[(i)] $\|\sigma_\omega\|_{H^s(\Omega_T\times \bR)}\leq \|\sigma\|_{H^s(\Omega_T\times \bR)}$;

    \item[(ii)] $\lim_{\omega\to 0}\|\sigma_\omega-\sigma\|_{H^s(\Omega_T\times \bR)}=0$;

    \item[(iii)]\label{a10z3} $\|\sigma^*_\omega\|_{H^s(\Omega_T\times \bR)}\lesssim \frac{\|\sigma\|_{H^s(\Omega_T\times \bR)}}{\omega}$.
\end{enumerate}
\end{prop}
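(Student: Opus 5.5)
The plan is to work entirely on the Fourier side in $\theta$. Partial Fourier transform in $\theta$ is an isometry (up to a fixed constant) from $L^2(\Omega_T\times\bR_\theta)$ onto $L^2(\Omega_T\times\bR_k)$. For $s\in\bN_0$, the $H^s(\Omega_T\times\bR)$ norm squared is equivalent to a sum, over multi-indices $\alpha$ in $(t,x)$ and $j$ with $|\alpha|+j\le s$, of $\int |k|^{2j}|\partial^\alpha_{t,x}\hat\sigma(t,x,k)|^2\,dt\,dx\,dk$. The multiplier $\chi_\omega(k)$ is independent of $(t,x)$, so it commutes with every $\partial^\alpha_{t,x}$, and on the Fourier side it commutes trivially with multiplication by $(ik)^j$. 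Hence each of the three statements reduces to a pointwise-in-$k$ bound on a multiplier, applied term by term to $\widehat{\partial^\alpha_{t,x}\partial^j_\theta\sigma}=(ik)^j\partial^\alpha_{t,x}\hat\sigma$.

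For (i), $0\le\chi_\omega\le 1$ gives $|\chi_\omega(k)(ik)^j\partial^\alpha\hat\sigma|\le|(ik)^j\partial^\alpha\hat\sigma|$ pointwise. Summing over $(\alpha,j)$ gives the inequality with constant $1$, provided one uses the norm defined through these derivatives. This is the standard norm on $H^s$ of a product domain; since $\Omega_T$ has boundary, the norm should be taken intrinsically rather than via extension, and I would remark that this choice is the one intended.

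For (ii), the error term is $(1-\chi_\omega(k))(ik)^j\partial^\alpha\hat\sigma$. It is bounded pointwise by $|(ik)^j\partial^\alpha\hat\sigma|\in L^2$, and it tends to $0$ a.e. because $1-\chi_\omega$ is supported in $|k|\le 2\omega$ (Remark \ref{rmza}(i)). Dominated convergence then gives $\|\sigma_\omega-\sigma\|_{H^s}\to 0$.

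For (iii), formula \eqref{mzz} says $\widehat{\sigma^*_\omega}=\chi_\omega(k)\hat\sigma/(ik)$. Since $\chi_\omega$ vanishes on $|k|\le\omega$, we have $|\chi_\omega(k)/(ik)|\le 1/\omega$. Therefore each term satisfies $|(ik)^j\partial^\alpha\widehat{\sigma^*_\omega}|\le\omega^{-1}|(ik)^j\partial^\alpha\hat\sigma|$, and summing gives $\|\sigma^*_\omega\|_{H^s}\le\omega^{-1}\|\sigma\|_{H^s}$. The only point needing care is that $\sigma^*_\omega$ is a genuine element of $L^2$, and this follows from the same bound with $\alpha=0$, $j=0$. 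I would also note that for $j\ge1$ the better bound $|k|^{j-1}|\hat\sigma|$ holds, but it is not needed.

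I expect no real obstacle. The only subtlety is the justification that $H^s(\Omega_T\times\bR)$ can be characterized through the $\theta$-Fourier transform with $(t,x)$-derivatives taken in the weak sense. Because $\Omega_T$ is bounded only in $t$ and the $\theta$ direction is all of $\bR$, with no boundary in $\theta$, Plancherel in $\theta$ applies for a.e. $(t,x)$ and commutes with weak $(t,x)$-derivatives.
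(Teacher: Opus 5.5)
Your proposal is correct and is essentially the paper's argument: (i) follows from $0\le\chi_\omega\le 1$, (ii) from $\chi_\omega(k)\to 1$ for every $k\neq 0$ together with dominated convergence, and (iii) from the pointwise bound $|\chi_\omega(k)/(ik)|\le \omega^{-1}$ applied to formula \eqref{mzz}. You simply spell out the Plancherel-in-$\theta$ bookkeeping, namely that the multiplier commutes with weak $(t,x)$-derivatives, which the paper's three-line proof leaves implicit.
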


\begin{proof}
Part (i) is immediate from Definition \ref{mz2}, and part (ii) follows from Remark \ref{rmza}(i) and the dominated convergence theorem.  Part (iii) follows from the formula \eqref{mzz}.
\end{proof}

\begin{rem}
\textup{Proposition \ref{a10z}(iii) implies, for example, that 
 $$\lim_{\eps\to 0}\sqrt{\eps}\|\sigma^*_\omega\|_{H^s(\Omega_T\times \bR)}=0,$$
   if one takes $\omega=\eps^d$ for some $0<d<1/2$.}
\end{rem}

\section{Some technical tools}\label{tools}

We gather here a few of the technical tools we need, starting with Friedrichs's lemma.

\begin{lem}[Friedrichs's lemma, \cite{CP}]\label{fl}  
  Let $P$ be a differential operator of order $m\in \bN_0$ on $\bR^n$ with $C^\infty$ coefficients that are constant outside some compact set. Let $\delta_\rho$ be a $C^\infty$ approximate identity with support contained in $\{x\in \bR^n \mid |x|\leq \rho\}$, and define $R^\rho u=\delta_\rho * u$.    %Writing $\|u\|_{H^s(\bR^n)}=\|u\|_s$ where $s\in \bN_0$, 
  Then we have:
  \begin{enumerate}
      \item[(i)] $\|R^\rho u\|_{H^s(\RR^n)}\leq C\|u\|_{H^s(\RR^n)}$ and $\|R^\rho u-u\|_{H^s(\RR^n)}\to 0$ as $\rho\to 0$. 

      \item[(ii)] $\|[P,R^\rho]u\|_{H^s(\RR^n)}\leq C\|u\|_{H^{s+m-1}(\RR^n)}.$

      \item[(iii)] $[P,R^\rho]u\to 0$ in $H^s(\RR^n)$ as $\rho\to 0$, if $u\in H^{s+m-1}(\RR^n).$
  \end{enumerate}
\end{lem}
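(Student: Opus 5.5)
The plan is to prove the three parts of Lemma~\ref{fl} by writing $R^\rho$ as a Fourier multiplier and handling the commutator through a Taylor expansion of the coefficients. Since $\delta_\rho(x)=\rho^{-n}\delta(x/\rho)$ with $\int\delta=1$, we have $\widehat{R^\rho u}(\xi)=\hat\delta(\rho\xi)\hat u(\xi)$ and $|\hat\delta|\le \|\delta\|_{L^1}$.

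\textbf{Part (i).} Plancherel with the weight $\langle\xi\rangle^{2s}$ gives $\|R^\rho u\|_{H^s}\le\|\delta\|_{L^1}\|u\|_{H^s}$. For convergence, note that
\[
|\hat\delta(\rho\xi)-1|^2\langle\xi\rangle^{2s}|\hat u|^2 \le C\langle\xi\rangle^{2s}|\hat u|^2
\quad\text{and}\quad \hat\delta(\rho\xi)\to\hat\delta(0)=1 \ \text{pointwise},
\]
so dominated convergence yields $\|R^\rho u-u\|_{H^s}\to0$.

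\textbf{Part (ii).} Write $P=\sum_{|\alpha|\le m}a_\alpha(x)\partial^\alpha$. Since $\partial^\alpha$ commutes with $R^\rho$, it suffices to treat $[a,R^\rho]\partial^\alpha$, where $a$ is smooth and $a-a(\infty)$ has compact support; constants commute with $R^\rho$, so we may assume $a\in C_c^\infty$. Then
\[
[a,R^\rho]v(x)=\int\delta_\rho(x-y)\bigl(a(x)-a(y)\bigr)v(y)\,dy .
\]
For $|\alpha|\ge1$, set $\partial^\alpha=\partial_j\partial^\beta$ and integrate by parts in $y$ to move $\partial_j$ off $v$. The derivative then falls either on $\delta_\rho(x-y)$ or on $a(x)-a(y)$.
\begin{itemize}
\item When it falls on the mollifier, write $a(x)-a(y)=(x-y)\cdot\int_0^1\nabla a\bigl(y+t(x-y)\bigr)\,dt$. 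The kernel $(x-y)\,\partial_j\delta_\rho(x-y)$ is $\rho^{-n}\kappa\bigl((x-y)/\rho\bigr)$ with $\kappa(z)=z\,\partial_j\delta(z)$, which has $L^1$ norm bounded uniformly in $\rho$.
\item When it falls on $a$, we get a term $R^\rho(\partial_j a\,\cdot)$ applied to $\partial^\beta v$.
\end{itemize}
Both pieces are bounded on $L^2$ uniformly in $\rho$, so $\|[a,R^\rho]\partial^\alpha v\|_{L^2}\le C\|v\|_{H^{m-1}}$. Higher $s$ is handled by differentiating: $\partial^\gamma[P,R^\rho]=[P,R^\rho]\partial^\gamma+[[\partial^\gamma,P],R^\rho]$. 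The operator $[\partial^\gamma,P]$ has order $m$, smooth coefficients, and only lower-order-in-$x$ structure, so induction on $|\gamma|\le s$ gives (ii). Alternatively, one can treat the operators as a bounded family in the symbol class $S^{m-1}$ uniformly in $\rho$.

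\textbf{Part (iii).} Use density. For $u\in C_c^\infty$ (or $H^{s+m}$), $[P,R^\rho]u=P R^\rho u-R^\rho Pu\to Pu-Pu=0$ in $H^s$ by (i). For general $u\in H^{s+m-1}$, approximate by $u_k\in C_c^\infty$ and apply the uniform bound from (ii) to $u-u_k$. Combining the two gives (iii).

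The main obstacle is part (ii): we gain a derivative uniformly in $\rho$ even though $\nabla\delta_\rho$ has size $\rho^{-1}$. The Taylor factor $x-y=O(\rho)$ on the support of the kernel is exactly what cancels this loss.
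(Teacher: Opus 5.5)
The paper does not prove this lemma; it only cites \cite{CP}. Your argument is the classical proof found there: the Fourier multiplier for (i); for (ii) the kernel of $[a,R^\rho]\partial_j$, an integration by parts, and a first-order Taylor expansion of $a$, so that the factor $x-y=O(\rho)$ absorbs the $\rho^{-1}$ coming from $\nabla\delta_\rho$; and density plus the uniform bound for (iii). It is correct in substance, but it omits one case and misstates one step.

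\textbf{The omitted case} is $m=0$, $s=0$ in (ii), i.e.\ $\|[a,R^\rho]u\|_{L^2}\le C\|u\|_{H^{-1}}$.
\begin{itemize}
\item Your base step only treats $|\alpha|\ge1$. For $m\ge1$ the $\alpha=0$ term is trivially bounded by $C\|v\|_{L^2}\le C\|v\|_{H^{m-1}}$.
\item For $m=0$, however, that term is the whole operator, and there is no derivative to integrate by parts. This is also the base case your induction needs when $m=0$.
\item It follows from your own estimate. Write $u=w_0+\sum_j\partial_jw_j$ with $w_0=(1-\Delta)^{-1}u$ and $w_j=-\partial_j(1-\Delta)^{-1}u$, so that $\|w_j\|_{L^2}\le\|u\|_{H^{-1}}$ for all $j$. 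Then apply your integration-by-parts bound to each $[a,R^\rho]\partial_jw_j$.
\item Alternatively, use duality: the adjoint is $-[\bar a,\tilde R^\rho]$, where $\tilde R^\rho$ is convolution with $\overline{\delta_\rho(-\cdot)}$. Then apply the $L^2\to H^1$ bound for $m=0$, $s=1$.
\end{itemize}

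\textbf{The misstatement} is that $[\partial^\gamma,P]$ has order $m$; in general its order is $m+|\gamma|-1$. The induction still works if you peel off one derivative at a time, since $[\partial_j,P]=\sum_\alpha(\partial_ja_\alpha)\partial^\alpha$ is again of order $m$ with compactly supported coefficients. Equivalently, $[[\partial^\gamma,P],R^\rho]=\sum_{0<\gamma'\le\gamma}\binom{\gamma}{\gamma'}[P_{\gamma'},R^\rho]\partial^{\gamma-\gamma'}$ with $P_{\gamma'}=\sum_\alpha(\partial^{\gamma'}a_\alpha)\partial^\alpha$, and the $s=0$ bound finishes directly.

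Finally, your normalization $\delta_\rho=\rho^{-n}\delta(\cdot/\rho)$ is the right reading of the hypothesis, not just a convenience. The uniform bound in (ii) uses $\sup_\rho\int|z|\,|\nabla\delta_\rho(z)|\,dz<\infty$, and it can fail for rapidly oscillating mollifiers whose supports merely shrink.
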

 
 The following  trace estimate is used, for example, in the proof of Proposition \ref{a11z}.
 
\begin{prop}[Trace estimate]\label{trace}
 Let $a(x,y)\in H^1(x,L^2(y))$.  
 Then $$\|a(0,y)\|_{L^2(y)}\lesssim \|a(x,y)\|_{H^1(x,L^2(y))}.$$  
\end{prop}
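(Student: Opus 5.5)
The plan is the one-dimensional trace inequality in $x$, applied with values in the Hilbert space $L^2(y)$ so that $y$ only rides along as a parameter. By density it suffices to prove the estimate for $a$ smooth and compactly supported in $x$ with values in $L^2(y)$, for instance $a\in C^\infty_c(\bR_x, L^2(y))$ or smooth compactly supported functions of $(x,y)$. Both sides are continuous in the $H^1(x,L^2(y))$ norm: the right side trivially, and the left side once the inequality is known on the dense class. So the trace map extends uniquely by continuity, and the inequality passes to the limit.

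For smooth compactly supported $a$, fix $y$ and use the fundamental theorem of calculus in $x$. If the $x$-domain is $\bR$, then
\begin{align*}
|a(0,y)|^2=-\int_0^\infty \partial_x\left(|a(x,y)|^2\right)\mathrm dx = -2\,\mathrm{Re}\int_0^\infty a(x,y)\overline{\partial_x a(x,y)}\,\mathrm dx.
\end{align*}
If the $x$-domain is a half-line $[0,\infty)$, the same identity holds. If it is a bounded interval, first multiply by a cutoff $\chi(x)$ with $\chi(0)=1$, at the cost of harmless lower-order terms.

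Next, integrate this identity in $y$ and apply Cauchy--Schwarz, followed by $2AB\le A^2+B^2$. This gives
\begin{align*}
\|a(0,\cdot)\|^2_{L^2(y)}\le 2\|a\|_{L^2(x,L^2(y))}\|\partial_x a\|_{L^2(x,L^2(y))}\le \|a\|^2_{H^1(x,L^2(y))}.
\end{align*}
Taking square roots yields the claim with constant $1$, or with a constant depending on the cutoff in the bounded case.

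The only point needing care is the density and limit step. One must know that $C^\infty_c$ functions are dense in $H^1(x,L^2(y))$; this follows by mollifying in $x$ and truncating, as in Lemma \ref{fl}. One must also check that the pointwise value $a(0,\cdot)$ is meaningful. It is, because $H^1(x,L^2(y))\subset C(x,L^2(y))$ by the same fundamental-theorem argument applied to differences. Together these identify the extended trace with the actual restriction at $x=0$. There is no real obstacle; this is the standard proof.
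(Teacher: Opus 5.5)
Your argument is correct, but it follows a different route from the paper. The paper works on the Fourier side in $x$. It writes $a(0,y)=\frac{1}{2\pi}\int \hat a(\xi,y)\,\mathrm d\xi$, inserts $\langle\xi\rangle\langle\xi\rangle^{-1}$, and applies Cauchy--Schwarz in $\xi$ to get the pointwise bound $|a(0,y)|\le C\|\langle\xi\rangle\hat a(\xi,y)\|_{L^2(\xi)}$ for each $y$. It then squares, integrates in $y$, and uses Plancherel. You instead use the fundamental theorem of calculus, $|a(0,y)|^2=-\int_0^\infty\partial_x|a|^2\,\mathrm dx$, followed by Cauchy--Schwarz and $2AB\le A^2+B^2$.

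The Fourier argument needs no separate density step. For a.e.\ $y$, $\langle\xi\rangle\hat a(\cdot,y)\in L^2$ forces $\hat a(\cdot,y)\in L^1$, so Fourier inversion at $x=0$ is legitimate and $a(\cdot,y)$ is continuous. It also extends verbatim to $H^s(x,L^2(y))$ for any $s>1/2$ by replacing $\langle\xi\rangle$ with $\langle\xi\rangle^s$. However, it presupposes $x\in\mathbb{R}$, so on a half-line or interval one must first extend $a$.

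Your physical-space argument does need the density and continuity step you describe; alternatively, Fubini plus absolute continuity of $a(\cdot,y)$ for a.e.\ $y$ suffices. In exchange, it works directly on a half-line $x\ge 0$ with the trace taken at the endpoint, which is the setting of the boundary traces at $x_1=0$ and $t=-T$ that occur later in the paper. It also gives the explicit constant $1$, and even the sharper multiplicative bound $\|a(0,\cdot)\|_{L^2(y)}^2\le 2\|a\|_{L^2}\|\partial_x a\|_{L^2}$.
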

  
\begin{proof}
  We have%\footnote{Here $\mathrm d'\xi=(2\pi)^{-1/2}\,\mathrm d\xi$.}
\begin{align*}
  \begin{split}
  &a(x,y)=\frac{1}{2\pi}\int\hat a(\xi,y)e^{ix\xi}\,\md \xi\\
  & \qquad \Rightarrow a(0,y)= \frac{1}{2\pi}\int\hat a(\xi,y)\,\md \xi= \frac{1}{2\pi}\int \langle\xi\rangle \hat a(\xi,y)\langle\xi\rangle^{-1}\,\md \xi\\
  &\qquad \Rightarrow |a(0,y)|\leq C\|\langle \xi\rangle \hat a(\xi,y)\|_{L^2(\xi)} \\
  & \qquad \Rightarrow \|a(0,y)\|_{L^2(y)}\leq C\|a(x,y)\|_{H^1(x,L^2(y))}.
  \end{split}
\end{align*}
This is the desired inequality.
\end{proof}

The following Sobolev interpolation inequality is used in step \textbf{3} of the proof of Theorem \ref{mta}.
%Let $H^s=H^s(\bR^n)$.
\begin{prop}\label{interpolate}
Let $0\leq m\leq k$ and set $\theta=m/k$.  We have
\begin{align}\label{inter}
\|u\|_{H^m(\RR^n)}\leq \|u\|^\theta_{H^k(\RR^n)}\|u\|_{L^2(\RR^n)}^{1-\theta}.
\end{align}
\end{prop}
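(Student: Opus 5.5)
The plan is to reduce \eqref{inter} to a pointwise inequality on the Fourier side and then apply Hölder's inequality. I use the Fourier characterization of the Sobolev norms,
\[
\|u\|_{H^s(\RR^n)}^2=\int_{\RR^n}\langle\xi\rangle^{2s}|\hat u(\xi)|^2\,\md\xi, \qquad \langle \xi\rangle=(1+|\xi|^2)^{1/2},
\]
normalized so that Plancherel gives $\|u\|_{L^2}^2=\int|\hat u|^2\,\md\xi$ with constant one. This normalization matters, since the claimed inequality has constant $1$. The endpoint cases $m=0$ and $m=k$ are trivial, so I assume $0<m<k$, which gives $0<\theta<1$.

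Since $\theta=m/k$, we have $2m=\theta\cdot 2k+(1-\theta)\cdot 0$. Splitting the integrand accordingly gives
\[
\langle\xi\rangle^{2m}|\hat u|^2=\bigl(\langle\xi\rangle^{2k}|\hat u|^2\bigr)^{\theta}\bigl(|\hat u|^2\bigr)^{1-\theta}.
\]
I then apply Hölder's inequality with conjugate exponents $p=1/\theta$ and $q=1/(1-\theta)$:
\[
\int\langle\xi\rangle^{2m}|\hat u|^2\,\md\xi\le\Bigl(\int\langle\xi\rangle^{2k}|\hat u|^2\,\md\xi\Bigr)^{\theta}\Bigl(\int|\hat u|^2\,\md\xi\Bigr)^{1-\theta}.
\]
By the Fourier characterization above, this is exactly $\|u\|_{H^m}^2\le\|u\|_{H^k}^{2\theta}\|u\|_{L^2}^{2(1-\theta)}$. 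Taking square roots yields \eqref{inter}.

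There is one point to check. If the paper instead defines $H^m$ through derivatives, via $\sum_{|\alpha|\le m}\|\partial^\alpha u\|_{L^2}^2$, then that norm is only equivalent to the Fourier one, and the inequality holds with a constant $C$ rather than $1$. This is harmless for the application in step \textbf{3}, where only the convergence $W_n\to W$ in $H^s$ for $s<1$ is used.

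That application also involves fractional $s$ and the domain $\tilde\Omega_T\times\bR$ rather than $\RR^n$. The same Hölder argument works verbatim for real $0\le s\le k$. On the domain, I would first extend functions by a bounded extension operator, or note that the compactly supported iterates can be treated on all of $\RR^{n+2}$. Apart from these remarks the argument is a one-line computation, and the Hölder step is the only substantive part.
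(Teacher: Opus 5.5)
Your proof is correct and matches the paper's argument: the paper likewise splits $\langle\xi\rangle^{2m}|\hat u|^2=(\langle\xi\rangle^{2k}|\hat u|^2)^{m/k}(|\hat u|^2)^{1-m/k}$ and applies H\"older with exponents $p=k/m$, $q=k/(k-m)$. One small caveat on your side remark about the application: the iterates $W^n$ in step \textbf{3} generally do not vanish at $t=0$, so their zero extension is not in $H^1(\RR^{n+2})$, and the extension-operator route (at the cost of a harmless constant) is the one to use there.
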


\begin{proof}
We can suppose $0<m<k$, since the cases $m=0$, $m=k$ are clear. By H\"older's inequality we have 
\begin{align*}
\begin{split}
\int |\hat u(\xi)|^{2}\langle \xi\rangle^{2m} \,\mathrm d\xi & =\int \left(|\hat u(\xi)|^{2m/k}\langle\xi\rangle^{2m}\right)\left(|\hat u(\xi)|^{2(1-\frac{m}{k})}\right) \,\mathrm d\xi\\
& \leq \left\||\hat u(\xi)|^{2m/k}\langle\xi\rangle^{2m}\right\|_{L^p(\RR^n)} \left\||\hat u(\xi)|^{2(1-\frac{m}{k})}\right\|_{L^q(\RR^n)}, 
\end{split}
\end{align*}
where $p=\frac{k}{m}, q=\frac{k}{k-m}$,
which is equivalent to \eqref{inter}.
\end{proof}

The next lemma estimates the regularization of a nonlinear function of $W$ and is used, for example, in step \textbf{4} of the proof of Theorem \ref{fnp}.
Recall the notation $\tilde\Omega_T=[0,T]_t\times \bR^n_x$.   For an approximate identity $\rho=(\rho_x,\rho_\theta)$ we define the regularization $g^\rho$ of $g$ by $g^\rho \coloneqq R^{\rho_x}(R^{\rho_\theta}g)$.\footnote{Recall, for example, that $R^{\rho_\theta}g=\delta_{\rho_\theta}*g$, where $\delta_{\rho_\theta}(\theta)$ is an approximate identity.}
\begin{lem}\label{rl}
Let $f$ and $\phi$ be as in Theorem \ref{fnp}.
\begin{enumerate}
    \item[(i)] If $W\in H^1(\tilde\Omega_T\times\bR_\theta)$, then there exists $M>0$ such that for $\epsilon$ small
\begin{align*}
\sqrt{\eps}\left\|f(t,x,0,\epsilon^{-\frac12}W\nabla_{t,x} \phi)\right\|_{H^1(\tilde\Omega_T\times \bR_\theta)}\leq M.
\end{align*}

    \item[(ii)] For each $\rho=(\rho_x,\rho_\theta)$
\begin{align}\label{toold}
\sqrt{\eps}f(t,x,0,\epsilon^{-\frac12}W\nabla_{t,x} \phi)^\rho\in H^1([0,T],H^\infty(\bR^n_x\times \bR_\theta)).
\end{align}
\end{enumerate}
\end{lem}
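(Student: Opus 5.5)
The plan is to prove (i) from the Lipschitz bounds of Assumption~\ref{amr1z} and the chain rule, and (ii) from the smoothing properties of convolution applied to (i). Set $G(t,x,\theta)\coloneqq\sqrt{\eps}f(t,x,0,\epsilon^{-\frac12}W\nabla_{t,x}\phi)$, where $\phi=\phi_\mi$ is $C^\infty$ on the relevant set. We assume, as in the applications, that $W$ has compact $(t,x)$-support, so that $\nabla_{t,x}\phi$ and its derivatives are bounded there.

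For the $L^2$ part of (i): since $f(t,x,0)=0$ and $|f_\zeta|\le M$ (Remark~\ref{localc}(iii)), we have $|G|\le \sqrt{\eps}\,M\epsilon^{-\frac12}|W||\nabla_{t,x}\phi|\lesssim |W|$. The factors of $\sqrt\eps$ cancel exactly, which is the whole point of the scaling.

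For the derivatives, take first $W$ smooth and compactly supported, and argue by density afterwards. Then $\partial_\theta G=f_\zeta\cdot(\partial_\theta W\,\nabla_{t,x}\phi)$, bounded by $C|\partial_\theta W|$. Also
\[
\partial_{t,x}G=\sqrt\eps\,(\partial_{t,x}f)(t,x,0,\epsilon^{-\frac12}W\nabla_{t,x}\phi)+f_\zeta\cdot\bigl(\partial_{t,x}W\,\nabla_{t,x}\phi+W\,\partial_{t,x}\nabla_{t,x}\phi\bigr).
\]
For the first term we use that $\partial_tf,\nabla f$ are Lipschitz in $\zeta$ and vanish at $\zeta=0$ (because $f(t,x,0)=0$ for all $(t,x)$). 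This gives $|\sqrt\eps\,\partial_{t,x}f(\dots)|\le M|W||\nabla_{t,x}\phi|$. Hence $|\nabla_{t,x,\theta}G|\lesssim |W|+|\nabla_{t,x,\theta}W|$, with constants independent of $\eps$, and $\|G\|_{H^1}\lesssim\|W\|_{H^1}\eqqcolon M$.

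To pass to general $W\in H^1$, approximate by smooth $W_k\to W$ in $H^1$. The Lipschitz bound gives $G(W_k)\to G(W)$ in $L^2$, while $G(W_k)$ stays bounded in $H^1$. A weak limit argument then yields the bound for $G(W)$. This is the standard chain rule for Lipschitz functions composed with $H^1$ functions, and it is the main technical point to handle carefully, since $f$ is only Lipschitz in $\zeta$ and not globally $C^1$ with bounded second derivatives.

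For (ii): $G\in H^1(\tilde\Omega_T\times\bR)\subset H^1([0,T],L^2(\bR^n_x\times\bR_\theta))$, with $\partial_tG\in L^2([0,T],L^2)$ by (i). Convolution with the approximate identity $\delta_\rho(x,\theta)$ maps $L^2(\bR^{n+1})$ boundedly into $H^s(\bR^{n+1})$ for every $s$, with a norm depending on $\rho$. It also commutes with $\partial_t$. Therefore $G^\rho$ and $\partial_tG^\rho=(\partial_tG)^\rho$ both lie in $L^2([0,T],H^\infty)$, which is \eqref{toold}.
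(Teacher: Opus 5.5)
Your proposal is correct and follows the same route as the paper. The $L^2$ bound comes from $f(t,x,0)=0$ and the Lipschitz property. The $H^1$ bound comes from the chain rule, with $\sqrt{\epsilon}\,\nabla_{t,x}f$ controlled by its Lipschitz bound; you usefully make explicit that $\nabla_{t,x}f(t,x,0)=0$, which the paper leaves implicit. Part (ii) follows from (i) together with smoothing by convolution in $(x,\theta)$, which commutes with $\partial_t$.

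Your density step is harmless but not delicate: $f$ is $C^\infty$ with $f_\zeta$ bounded, so the $H^1$ chain rule applies directly.
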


\begin{proof}
\textbf{(i)} Boundedness of the $L^2$ norm follows immediately from the Lipschitz property of $f$.  
%Letting $\partial$ denote the partial with respect to $t$ or $x$, 
Defferentiate with respect to $(t,x)$ we obtain
\begin{align*}%\label{care2}
\begin{split}
&\nabla_{t,x}\left[\sqrt{\eps}f\left(t,x,0,\epsilon^{-\frac12}W\nabla_{t,x} \phi\right)\right] \\ 
& \qquad \qquad =\sqrt{\eps}(\nabla_{t,x} f)\left(t,x,0,\epsilon^{-\frac12}W\nabla_{t,x} \phi\right)+f_\zeta\left(t,x,0,\epsilon^{-\frac12}W\nabla_{t,x} \phi\right)\nabla_{t,x}(W\nabla_{t,x}\phi).
\end{split}
\end{align*}
Since $\nabla_{t,x} f$ is Lipschitz in $\zeta$ and $f_\zeta$ is bounded, the $L^2$ norm of the right side is bounded; recall Assumption \ref{amr1z}.  A similar estimate
for $\partial_\theta$ yields part (i).

\textbf{(ii)} By item (i)
\begin{align*}
\sqrt{\eps}f(t,x,0,\epsilon^{-\frac12} W\nabla_{t,x} \phi)\in H^1([0,T],L^2((\bR^n_x\times \bR_\theta)) \text{ uniformly for }\eps\text{ small}.
\end{align*}
Since
\begin{align*}
\sqrt{\eps}f(t,x,0,\epsilon^{-\frac12}W\nabla_{t,x} \phi)^\rho=\delta_{\rho_x}*_x\left(\delta_{\rho_\theta}*_\theta\sqrt{\eps}f(t,x,0,\epsilon^{-\frac12}W\nabla_{t,x} \phi)\right),
\end{align*}
\eqref{toold} follows.
\end{proof}

The next lemma estimates the effect of regularization in $x_1$ on traces at $x_1=0$.   Of course, a close analogue applies as well to the effect of regularization in $t$ on traces at $t=-T.$   We apply it in the proofs of Theorems~\ref{mta} and~\ref{fnp}.

\begin{lem}\label{tl}
For $m\in \bN$ let $f,g\in H^1((\overline{\bR_+})_{x_1},H^1(\bR^m_{x'}))$, and suppose $f(0,x')=g(0,x')$.  Let $\delta_\sigma(x_1)\geq 0$ be an approximate identity with compact support in $x_1\leq 0$, and set\footnote{Observe then that  for $x_1\geq 0$, $f^\sigma(x)$ depends only on $f_{|x_1\geq 0}.$}
$$f^\sigma(x) \coloneqq (\delta_\sigma *_{x_1} f) (x)=\int f(x_1-y_1,x')\delta_\sigma(y_1) \,\mathrm dy_1.$$
Then $\|f^\sigma(0,x')-g^\sigma(0,x')\|_{H^1(\bR^m)}=o_\sigma(1).$
\end{lem}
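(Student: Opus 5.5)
The plan is to reduce to the difference $h\coloneqq f-g$, which lies in $H^1((\overline{\bR_+})_{x_1},H^1(\bR^m_{x'}))$ and has vanishing trace $h(0,x')=0$. Regularization in $x_1$ is linear, so $f^\sigma(0,x')-g^\sigma(0,x')=h^\sigma(0,x')$, and it suffices to show $\|h^\sigma(0,\cdot)\|_{H^1(\bR^m)}\to 0$ as $\sigma\to 0$. Since $\delta_\sigma$ is supported in $x_1\le 0$, we have
\begin{align*}
h^\sigma(0,x')=\int_{-\sigma}^{0} h(-y_1,x')\,\delta_\sigma(y_1)\,\mathrm dy_1,
\end{align*}
which only involves values of $h$ on $0\le x_1\le \sigma$. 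The integrand makes sense because $x_1\mapsto h(x_1,\cdot)$ is continuous into $H^1(\bR^m)$; this follows from the one-dimensional Sobolev embedding $H^1(\overline{\bR_+},X)\subset C(\overline{\bR_+},X)$ with $X=H^1(\bR^m)$.

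The key step is to apply Minkowski's integral inequality in the Banach space $X=H^1(\bR^m_{x'})$. Using $\delta_\sigma\ge 0$ and $\int\delta_\sigma=1$, this gives
\begin{align*}
\|h^\sigma(0,\cdot)\|_{X}\le \int_{-\sigma}^0 \|h(-y_1,\cdot)\|_X\,\delta_\sigma(y_1)\,\mathrm dy_1\le \sup_{0\le s\le\sigma}\|h(s,\cdot)\|_X .
\end{align*}
Because $h(0,\cdot)=0$ and $s\mapsto h(s,\cdot)\in X$ is continuous, the right side tends to $0$ as $\sigma\to 0$. This is the claimed $o_\sigma(1)$.

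To be explicit about continuity, we can write $h(s,\cdot)=\int_0^s\partial_{x_1}h(r,\cdot)\,\mathrm dr$. Cauchy--Schwarz then gives
\begin{align*}
\|h(s,\cdot)\|_X\le \sqrt{s}\,\|\partial_{x_1}h\|_{L^2((0,s),X)} .
\end{align*}
This yields the quantitative rate $\|h^\sigma(0,\cdot)\|_{H^1(\bR^m)}\le \sqrt{\sigma}\,\|h\|_{H^1(\overline{\bR_+},H^1(\bR^m))}$.

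The only delicate point is justifying the identity $h(s)=\int_0^s\partial_{x_1}h$ in $X$ and the pointwise meaning of the trace. I would handle this by first proving the estimate for $h$ smooth and compactly supported in $x'$. Then I would pass to the limit using density in $H^1(\overline{\bR_+},X)$, together with continuity of the trace map $H^1(\overline{\bR_+},X)\to X$, which is the vector-valued analogue of Proposition \ref{trace}.
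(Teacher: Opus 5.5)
Your proposal is correct and follows the paper's own proof. Both pass to $h=f-g$ and apply Minkowski's integral inequality in $H^1(\bR^m)$, using $\delta_\sigma\ge 0$ and $\int\delta_\sigma=1$, then conclude from the continuity of $x_1\mapsto h(x_1,\cdot)$ into $H^1(\bR^m)$ together with $h(0,\cdot)=0$. Your extra rate $\|h^\sigma(0,\cdot)\|_{H^1(\bR^m)}\le\sqrt{\sigma}\,\|\partial_{x_1}h\|_{L^2((0,\sigma),H^1(\bR^m))}$ assumes that $\delta_\sigma$ is supported in $[-\sigma,0]$. With that assumption it is a valid sharpening that the paper does not state.
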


\begin{proof}
We have 
\begin{align}\label{tl2} 
\begin{split}
\|f^\sigma(0,x')-g^\sigma(0,x')\|_{H^1(\bR^m_{x'})}
& =\left\|\int[f(-y_1,z)-g(-y_1,z)]\delta_\sigma(y_1) \,\mathrm dy_1\right\|_{H^1(\bR^m)}\\
& \leq \int \|f(-y_1,z)-g(-y_1,z)\|_{H^1(\bR^m)}\delta_\sigma(y_1) \,\mathrm dy_1.
\end{split}
\end{align}
Fix $\eps>0$.  Since $f-g\in H^1((\overline{\bR_+})_{x_1},H^1(\bR^m_{x'}))\subset C((\overline{\bR_+})_{x_1},H^1(\bR^m_{x'}))$ and $f=g$ on $x_1=0$, the right side of \eqref{tl2} is less than or equal to $ \eps \int\delta_\sigma(y_1)\,\mathrm dy_1=\eps$ for $\sigma$ small enough.
\end{proof}

\begin{rem}\label{tl3}
(i) The conclusion of Lemma \ref{tl}  holds by the same proof if we just assume $f,g\in L^2((\overline{\bR_+})_{x_1},H^1(\bR^m_{x'}))$ and there exists $\alpha>0$ such that $f,g\in H^1([0,\alpha]_{x_1},H^1(\bR^m_{x'}))$.

(ii) For $f\in H^1((\overline{\bR_+})_{x_1},H^1(\bR^m_{x'}))$ a similar proof shows
\begin{align*}
\|f(0,x')-f^\sigma(0,x')\|_{H^1(\bR^m)}=o_\sigma(1).
\end{align*}
\end{rem}

The final lemma allows us to estimate the nonlinear interaction term $f_{\mathrm{nc}}$ in \eqref{ead3}.
\begin{lem}\label{nc}
Write $x=(x_1,x_2,x'')\in \bR^m$ and 
suppose $\phi_1$, $\phi_2$ are   $C^1$ characteristic phases for which the change of variables 
\begin{align}\label{nc1}
\begin{split}
&(x'',\kappa_1,\kappa_2)=F^{-1}(x) \coloneqq (x'',\phi_1(x),\phi_2(x)), \\
&(x_1,x_2,x''))=F(x'',\kappa_1,\kappa_2)=(x_1(x'',\kappa_1,\kappa_2),x_2(x'',\kappa_1,\kappa_2),x'')
\end{split}
\end{align}
is a $C^1$ diffeomorphism in an open set $\cO\subset \bR^m$.  Assume $W_1,W_2$ are in $H^\infty(\bR^m_x\times \bR_\theta)$ with $x$-support in a compact $K\subset\cO$. 
Then the term $f_{\mathrm{nc}}\left(\epsilon^{-\frac12}W_1,\epsilon^{-\frac12}W_2\right)=o_\eps(1)$ in $L^2(\Omega_T)$, where $f_{\mathrm{nc}}$ is defined in \eqref{nc1b}.

\end{lem}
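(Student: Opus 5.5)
Throughout, write $\zeta_j\coloneqq\epsilon^{-\frac12}W_j(x,\phi_j/\eps)\nabla\phi_j$ for $j=1,2$. With this notation
\[
f_{\mathrm{nc}}=f(\zeta_1+\zeta_2)-f(\zeta_1)-f(\zeta_2)+f(0),
\]
where $f(0)=0$. The plan has three steps.

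\textbf{Step 1: an $\eps$-uniform bound and reduction to smooth data.} The Lipschitz property \eqref{lips} gives the pointwise bound
\[
|f_{\mathrm{nc}}|\le 2M\min(|\zeta_1|,|\zeta_2|).
\]
Hence $f_{\mathrm{nc}}$ is controlled, uniformly in $\eps$, by either $\|\zeta_1\|_{L^2}$ or $\|\zeta_2\|_{L^2}$. By Proposition \ref{a11z}(i), each of these is $\lesssim\|W_j\|_{H^1}$. The same Lipschitz estimate shows that replacing $W_j$ by an $H^1$-close function changes $f_{\mathrm{nc}}$ by an amount that is small uniformly in $\eps$. So by density we may assume $W_1,W_2$ are continuous with compact $(x,\theta)$-support, with $\theta$-support in $[-R,R]$.

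\textbf{Step 2: localization to the set where both pulses are present.} Since $f_{\mathrm{nc}}$ vanishes wherever $\zeta_1=0$ or $\zeta_2=0$, it is supported in the set
\[
A_\eps=\{x\in K:\ |\phi_1(x)|\le R\eps,\ |\phi_2(x)|\le R\eps\}.
\]
Change variables via \eqref{nc1}, $x=F(x'',\kappa_1,\kappa_2)$, whose Jacobian is bounded on $K$. In these coordinates $A_\eps$ is a slab in $(\kappa_1,\kappa_2)$ of size $(2R\eps)^2$, so $|A_\eps|\lesssim\eps^2$. On $A_\eps$ the functions $\zeta_j$ are pointwise of size $\eps^{-1/2}\|W_j\|_{L^\infty}$. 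Consequently
\[
\|f_{\mathrm{nc}}\|^2_{L^2}\lesssim \int_{A_\eps}\min(|\zeta_1|,|\zeta_2|)^2\,dx\lesssim \eps^{-1}\cdot\eps^2=\eps,
\]
which tends to $0$.

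\textbf{Step 3: the obstacle and how to avoid $L^\infty$.} The main difficulty is that Step 2 uses an $L^\infty$ bound on $W_j$, whereas the stated hypothesis is $W_j\in H^\infty$. In the relevant dimension $H^\infty(\bR^m\times\bR)$ does embed in $L^\infty$, so the argument above applies directly. If one wants to avoid $L^\infty$ altogether, one can instead use the same change of variables to write
\[
\int|\zeta_1|^2\mathbf 1_{|\phi_2|\le R\eps}\,dx
=\int \big|W_1(x,\kappa_1/\eps)\big|^2\,\eps^{-1}\,\mathbf 1_{|\kappa_2|\le R\eps}\,|\det F'|\,dx''\,d\kappa_1\,d\kappa_2 .
\]
After rescaling $\kappa_1=\eps\theta$, this becomes an integral of the trace of $|W_1|^2$ over a set whose $\kappa_2$-width is $O(\eps)$. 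This is $O(\eps)$ by a trace estimate in the spirit of Proposition \ref{trace}, used with the transversality $d\phi_1\wedge d\phi_2\neq0$ built into \eqref{nc1}. Combining this with the density reduction of Step 1, the interaction term is $o_\eps(1)$ in $L^2(\Omega_T)$.
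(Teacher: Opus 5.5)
Your argument is correct, and Steps 1--2 by themselves already give a complete proof. The route differs from the paper's mainly in how the $\theta$-tails of the profiles are handled.

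\textbf{The paper's route.} The paper never approximates. It fixes $\frac12<\beta<1$ and splits $K$ into two kinds of region.
\begin{itemize}
\item On the doubly thin set $K_i=\{|\phi_1|\le\eps^\beta,\ |\phi_2|\le\eps^\beta\}$, the pointwise bound $|f_{\mathrm{nc}}|^2\lesssim\eps^{-1}|(W_1,W_2)|^2$ (which uses $W_j\in L^\infty$, from $H^\infty$) and $\mathrm{vol}(K_i)\lesssim\eps^{2\beta}$ give $O(\eps^{2\beta-1})$.
\item On the sets $\{|\phi_j|\ge\eps^\beta\}$ it uses the one-sided bound $|f_{\mathrm{nc}}|\lesssim|\zeta_j|$. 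After rescaling, only the tail $|\theta|\ge\eps^{\beta-1}$ of $W_j$ contributes, and this tends to $0$ by decay in $\theta$.
\end{itemize}

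\textbf{Your route.} You use the $\eps$-uniform estimate \eqref{a11} together with the Lipschitz bound to reduce, once and for all, to bounded profiles with $\theta$-support in a fixed interval $[-R,R]$. After that the off-diagonal regions contribute nothing, and the whole integral is $O(\eps)$.

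\textbf{What each buys.} Your approach costs one approximation step but proves more. It only uses $\|f_{\mathrm{nc}}(W)-f_{\mathrm{nc}}(\tilde W)\|_{L^2}\lesssim\|W-\tilde W\|_{H^1}$ uniformly in $\eps$, so it establishes the lemma for $W_j$ merely in $H^1$ with $x$-support in $K$. The paper's proof needs the $L^\infty$ bound on the original profiles.

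\textbf{A small hypothesis mismatch.} Proposition \ref{a11z} is phrased for a single phase with its own change of variables. The same argument — rescale $\kappa_1=\eps\theta$ in the coordinates $F$ of \eqref{nc1}, then apply Proposition \ref{trace} on the hyperplane $\kappa_1=\eps\theta$ — gives exactly what you need, so this is harmless.

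\textbf{Step 3 is unnecessary and slightly misdiagnoses the issue.}
\begin{itemize}
\item After Step 1 your profiles are already bounded, so there is no $L^\infty$ obstacle left.
\item Conversely, the embedding $H^\infty\subset L^\infty$ alone would not let Step 2 ``apply directly.'' Step 2 also uses compact $\theta$-support, which $H^\infty$ profiles need not have; the moment-zero approximations to which the lemma is applied do not have it. It is Step 1 that supplies this, and the indicator $\mathbf 1_{|\phi_2|\le R\eps}$ in your alternative depends on it too.
\item The alternative needs, for each fixed $\kappa_2$, an $L^2$ bound on the restriction of $W_1$ to the codimension-two set $\{\kappa_1=\eps\theta,\ \kappa_2=\mathrm{const}\}$, uniformly in $\kappa_2$. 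That requires $H^s$ with $s>1$, not the codimension-one $H^1$ estimate of Proposition \ref{trace}. This is available under $H^\infty$, but it is not really ``in the spirit'' of that proposition.
\end{itemize}
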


\begin{proof}
\textbf{1. }Let $\frac{1}{2}<\beta<1$.  We estimate
\begin{align*}
\begin{split}
&\int_K |f_{\mathrm{nc}}|^2 \,\mathrm dx \mathrm dt\leq \int_{K_i} |f_{\mathrm{nc}}|^2+\int_{K_{ii}} |f_{\mathrm{nc}}|^2+\int_{K_{iii}} |f_{\mathrm{nc}}|^2 \eqqcolon A+B+C, 
\end{split}
\end{align*}
where
\[ K_i=\{x\in K \mid |\phi_1|\leq\eps^\beta, \ |\phi_2|\leq \eps^\beta\}, \ K_{ii}=\{x\in K \mid |\phi_1|\geq \eps^\beta\}, \ K_{iii}=\{x\in K \mid |\phi_2|\geq \eps^\beta\}. \]
Since $ |f_{\mathrm{nc}}|^2\leq \frac{1}{\eps}|(W_1,W_2)|^2$ and $\mathrm{vol}(K_i)\lesssim \eps^{2\beta}$,  we obtain $A=O(\eps^{2\beta-1})$.

\textbf{2. Estimate the term $B$.} Write 
\begin{align}\label{nc1b}
f_{\mathrm{nc}}=\left[f\left(\epsilon^{-\frac12}W_1,\epsilon^{-\frac12}W_2\right)-f\left(0,\epsilon^{-\frac12}W_2\right)\right]-f\left(\epsilon^{-\frac12}W_1,0\right).
\end{align}
We estimate $B$ in two parts.  First,
\begin{align*}
\int_{K_{ii}}\left|f\left(\epsilon^{-\frac12}W_1,0\right)\right|^2 \,\mathrm dx\lesssim \int_{K_{ii}}\epsilon^{-1}|W_1(x,\phi_1/\eps)|^2 \,\mathrm dx =o_\eps(1)
\end{align*}
by the argument of step {\bf 3} of the proof of Proposition \ref{a11z}.     
Moreover, 
 \begin{align*}
 \left\|f\left(\epsilon^{-\frac12}W_1,\epsilon^{-\frac12}W_2\right)-f\left(0,\epsilon^{-\frac12}W_2\right)\right\|_{L^2(K_{ii})}\lesssim \left\|\epsilon^{-\frac12}W_1\right\|_{L^2(K_{ii})}=o_\eps(1)
 \end{align*}
by the same argument.   The estimate of $C$ is similar.
\end{proof}

\bibliographystyle{alpha}
\bibliography{bib2}

\begin{thebibliography}{JMR00}

\bibitem[AR02]{ar2}
D.~Alterman and J.~Rauch.
\newblock Nonlinear geometric optics for short pulses.
\newblock {\em J. Diff. Eqns}, 178:437--465, 2002.

\bibitem[AR03]{altermanrauch}
D.~Alterman and J.~Rauch.
\newblock Diffractive nonlinear geometric optics for short pulses.
\newblock {\em SIAM J. Math. Analysis}, 34:1477--1502, 2003.

\bibitem[Che96]{cheverry1996}
Christophe Cheverry.
\newblock Propagation d'oscillations pr\`es d'un point diffractif.
\newblock {\em J. Math. Pures Appl.}, 75:419--467, 1996.

\bibitem[CP82]{CP}
J.~Chazaran and A.~Piriou.
\newblock {\em Introduction to the Theory of Linear Partial Differential Equations}.
\newblock North Holland, 1982.

\bibitem[CR04]{cr}
R.~Carles and J.~Rauch.
\newblock Focusing of spherical nonlinear pulses {III}: sub and supercritical cases.
\newblock {\em Tohuku Math. J.}, 56:393--410, 2004.

\bibitem[CW13]{CW1}
J.-F. Coulombel and M.~Williams.
\newblock Nonlinear geometric optics for reflecting uniformly stable pulses.
\newblock {\em J. Diff. Eqns}, 225:1939--1987, 2013.

\bibitem[CW14]{CW2}
J.-F. Coulombel and M.~Williams.
\newblock Amplification of pulses in nonlinear geometric optics.
\newblock {\em J. of Hyperbolic Diff. Eqns.}, 11:749--793, 2014.

\bibitem[Dum02]{dumas2002}
\'{E}ric Dumas.
\newblock Propagation of oscillations near a diffractive point for a semilinear and dissipative {K}lein--{G}ordon equation.
\newblock {\em Communications in Partial Differential Equations}, 27(5--6):953--978, 2002.

\bibitem[JMR95]{jmr1995tams}
Jean-Luc Joly, Guy M\'etivier, and Jeffrey Rauch.
\newblock Focusing at a point and absorption of nonlinear oscillations.
\newblock {\em Transactions of the A.M.S.}, 347:3921--3969, 1995.

\bibitem[JMR96]{jmr1996cpam}
Jean-Luc Joly, Guy M\'etivier, and Jeffrey Rauch.
\newblock Nonlinear oscillations beyond caustics.
\newblock {\em Communications on Pure and Applied Mathematics}, 49:443--527, 1996.

\bibitem[JMR00]{jmr2000mams}
Jean-Luc Joly, Guy M\'etivier, and Jeffrey Rauch.
\newblock Caustics for dissipative semilinear oscillations.
\newblock {\em Memoirs of the American Mathematical Society}, 144(685), 2000.

\bibitem[Mel75]{mel1975}
Richard~B. Melrose.
\newblock Microlocal parametrices for diffractive boundary value problems.
\newblock {\em Duke Math. J.}, 42:605--635, 1975.

\bibitem[MS78]{melsjo1978}
Richard~B. Melrose and Johannes Sj\"{o}strand.
\newblock Singularities of boundary value problems. {I}.
\newblock {\em Communications on Pure and Applied Mathematics}, 31:593--617, 1978.

\bibitem[Sak78]{sakamoto3}
R.~Sakamoto.
\newblock {\em Hyperbolic boundary value problems}.
\newblock Cambridge University Press, 1978.

\bibitem[Tay76]{taylor1976cpam}
Michael~E. Taylor.
\newblock Grazing rays and reflection of singularities of solutions to wave equations.
\newblock {\em Communications on Pure and Applied Mathematics}, 29:1--38, 1976.

\bibitem[Wil15]{willig}
C.~Willig.
\newblock {\em Nonlinear geometric optics for reflecting and evanescent pulses}.
\newblock UNC Chapel Hill PhD thesis, 2015.

\bibitem[Wil22]{williams2022}
Mark Williams.
\newblock Solving eikonal equations by the method of characteristics.
\newblock \url{https://markwilliams.web.unc.edu/wp-content/uploads/sites/19674/2022/01/eikonal.pdf}, 2022.

\bibitem[Wil24]{williams5}
M.~Williams.
\newblock Reflection of conormal pulse solutions to large variable-coefficient semilinear hyperbolic systems.
\newblock {\em Journal of Differential Equations}, 401:93--147, 2024.

\bibitem[WW25]{wangwil}
J.~Wang and M.~Williams.
\newblock Transport of nonlinear oscillations along rays that graze a convex obstacle to any order.
\newblock {\em Annals of PDE}, 11(1):1--74, 2025.

\bibitem[WW26]{wangwil2}
J.~Wang and M.~Williams.
\newblock Convex waves grazing convex obstacles to high order.
\newblock {\em Pure and Applied Analysis}, 8(1):153--188, 2026.

\end{thebibliography}

\end{document}